\newcommand {\Omit}[1]{}
\tikzset{
%Define standard arrow tip
>=stealth',
%Define style for different line styles
help lines/.style={dashed, thick},
axis/.style={<->},
important line/.style={thick},
connection/.style={thick, dotted},
}
\newlength{\hatchspread}
\newlength{\hatchthickness}
\tikzset{hatchspread/.code={\setlength{\hatchspread}{#1}},
         hatchthickness/.code={\setlength{\hatchthickness}{#1}}}
\tikzset{hatchspread=3pt,
         hatchthickness=0.4pt}
\pgfqpoint{\hatchspread}{\hatchspread}}% tile size
\newcommand{\nc}{\newcommand}
\nc{\rnc}{\renewcommand}
\nc{\bb}[1]{{\mathbb #1}}
\nc{\bbA}{\bb{A}}\nc{\bbB}{\bb{B}}\nc{\bbC}{\bb{C}}\nc{\bbD}{\bb{D}}
\nc{\bbE}{\bb{E}}\nc{\bbF}{\bb{F}}\nc{\bbG}{\bb{G}}\nc{\bbH}{\bb{H}}
\nc{\bbI}{\bb{I}}\nc{\bbJ}{\bb{J}}\nc{\bbK}{\bb{K}}\nc{\bbL}{\bb{L}}
\nc{\bbM}{\bb{M}}\nc{\bbN}{\bb{N}}\nc{\bbO}{\bb{O}}\nc{\bbP}{\bb{P}}
\nc{\bbQ}{\bb{Q}}\nc{\bbR}{\bb{R}}\nc{\bbS}{\bb{S}}\nc{\bbT}{\bb{T}}
\nc{\bbU}{\bb{U}}\nc{\bbV}{\bb{V}}\nc{\bbW}{\bb{W}}\nc{\bbX}{\bb{X}}
\nc{\bbY}{\bb{Y}}\nc{\bbZ}{\bb{Z}}
\nc{\mbf}[1]{{\mathbf #1}}
\nc{\bfA}{\mbf{A}}\nc{\bfB}{\mbf{B}}\nc{\bfC}{\mbf{C}}\nc{\bfD}{\mbf{D}}
\nc{\bfE}{\mbf{E}}\nc{\bfF}{\mbf{F}}\nc{\bfG}{\mbf{G}}\nc{\bfH}{\mbf{H}}
\nc{\bfI}{\mbf{I}}\nc{\bfJ}{\mbf{J}}\nc{\bfK}{\mbf{K}}\nc{\bfL}{\mbf{L}}
\nc{\bfM}{\mbf{M}}\nc{\bfN}{\mbf{N}}\nc{\bfO}{\mbf{O}}\nc{\bfP}{\mbf{P}}
\nc{\bfQ}{\mbf{Q}}\nc{\bfR}{\mbf{R}}\nc{\bfS}{\mbf{S}}\nc{\bfT}{\mbf{T}}
\nc{\bfU}{\mbf{U}}\nc{\bfV}{\mbf{V}}\nc{\bfW}{\mbf{W}}\nc{\bfX}{\mbf{X}}
\nc{\bfY}{\mbf{Y}}\nc{\bfZ}{\mbf{Z}}
\nc{\bfa}{\mbf{a}}\nc{\bfb}{\mbf{b}}\nc{\bfc}{\mbf{c}}\nc{\bfd}{\mbf{d}}
\nc{\bfe}{\mbf{e}}\nc{\bff}{\mbf{f}}\nc{\bfg}{\mbf{g}}\nc{\bfh}{\mbf{h}}
\nc{\bfi}{\mbf{i}}\nc{\bfj}{\mbf{j}}\nc{\bfk}{\mbf{k}}\nc{\bfl}{\mbf{l}}
\nc{\bfm}{\mbf{m}}\nc{\bfn}{\mbf{n}}\nc{\bfo}{\mbf{o}}\nc{\bfp}{\mbf{p}}
\nc{\bfq}{\mbf{q}}\nc{\bfr}{\mbf{r}}\nc{\bfs}{\mbf{s}}\nc{\bft}{\mbf{t}}
\nc{\bfu}{\mbf{u}}\nc{\bfv}{\mbf{v}}\nc{\bfw}{\mbf{w}}\nc{\bfx}{\mbf{x}}
\nc{\bfy}{\mbf{y}}\nc{\bfz}{\mbf{z}}
\newcommand{\G}{\mathbb{G}}
\newcommand{\op}{\text{op}}
\nc{\mcal}[1]{{\mathcal #1}}
\nc{\calA}{\mcal{A}}\nc{\calB}{\mcal{B}}\nc{\calC}{\mcal{C}}\nc{\calD}{\mcal{D}}
\nc{\calE}{\mcal{E}} \nc{\calF}{\mcal{F}}\nc{\calG}{\mcal{G}}\nc{\calH}{\mcal{H}}
\nc{\calI}{\mcal{I}}\nc{\calJ}{\mcal{J}}\nc{\calK}{\mcal{K}}\nc{\calL}{\mcal{L}}
\nc{\calM}{\mcal{M}}\nc{\calN}{\mcal{N}}\nc{\calO}{\mcal{O}}\nc{\calP}{\mcal{P}}
\nc{\calQ}{\mcal{Q}}\nc{\calR}{\mcal{R}}\nc{\calS}{\mcal{S}}\nc{\calT}{\mcal{T}}
\nc{\calU}{\mcal{U}}\nc{\calV}{\mcal{V}}\nc{\calW}{\mcal{W}}\nc{\calX}{\mcal{X}}
\nc{\calY}{\mcal{Y}}\nc{\calZ}{\mcal{Z}}
\nc{\fA}{\frak{A}}\nc{\fB}{\frak{B}}\nc{\fC}{\frak{C}} \nc{\fD}{\frak{D}}
\nc{\fE}{\frak{E}}\nc{\fF}{\frak{F}}\nc{\fG}{\frak{G}}\nc{\fH}{\frak{H}}
\nc{\fI}{\frak{I}}\nc{\fJ}{\frak{J}}\nc{\fK}{\frak{K}}\nc{\fL}{\frak{L}}
\nc{\fM}{\frak{M}}\nc{\fN}{\frak{N}}\nc{\fO}{\frak{O}}\nc{\fP}{\frak{P}}
\nc{\fQ}{\frak{Q}}\nc{\fR}{\frak{R}}\nc{\fS}{\frak{S}}\nc{\fT}{\frak{T}}
\nc{\fU}{\frak{U}}\nc{\fV}{\frak{V}}\nc{\fW}{\frak{W}}\nc{\fX}{\frak{X}}
\nc{\fY}{\frak{Y}}\nc{\fZ}{\frak{Z}}
\nc{\fa}{\frak{a}}\nc{\fb}{\frak{b}}\nc{\fc}{\frak{c}} \nc{\fd}{\frak{d}}
\nc{\fe}{\frak{e}}\nc{\fFf}{\frak{f}}\nc{\fg}{\frak{g}}\nc{\fh}{\frak{h}}
\nc{\fri}{\frak{i}}\nc{\fj}{\frak{j}}\nc{\fk}{\frak{k}}\nc{\fl}{\frak{l}}
\nc{\fm}{\frak{m}}\nc{\fn}{\frak{n}}\nc{\fo}{\frak{o}}\nc{\fp}{\frak{p}}
\nc{\fq}{\frak{q}}\nc{\fr}{\frak{r}}\nc{\fs}{\frak{s}}\nc{\ft}{\frak{t}}
\nc{\fu}{\frak{u}}\nc{\fv}{\frak{v}}\nc{\fw}{\frak{w}}\nc{\fx}{\frak{x}}
\nc{\fy}{\frak{y}}\nc{\fz}{\frak{z}}
\newtheorem{theorem}{Theorem}[section]
\newtheorem{lemma}[theorem]{Lemma}
\newtheorem{corollary}[theorem]{Corollary}
\newtheorem{prop}[theorem]{Proposition}
\newtheorem{assumption}[theorem]{Assumption}
\theoremstyle{definition}
\newtheorem{definition}[theorem]{Definition}
\newtheorem{example}[theorem]{Example}
\newtheorem{remark}[theorem]{Remark}
\newtheorem{thm}{Theorem}
\DeclareMathOperator{\im}{im} 
 \DeclareMathOperator{\id}{id}
 \DeclareMathOperator{\Sym}{Sym}
\DeclareMathOperator{\ch}{ch}
 \DeclareMathOperator{\GL}{GL}
\DeclareMathOperator{\Hom}{{Hom}}
\DeclareMathOperator{\Spec}{{Spec}}
\DeclareMathOperator{\Gm}{\bbG_m}
\newcommand{\sph}{\fs}
   \DeclareMathOperator{\sign}{sign}
\DeclareMathOperator{\Spf}{Spf}
\DeclareMathOperator{\Td}{Td}
\DeclareMathOperator{\inc}{in}
\DeclareMathOperator{\out}{out}
\DeclareMathOperator{\Rep}{Rep}
\DeclareMathOperator{\Res}{Res}
\DeclareMathOperator{\Sh}{Sh}
\newcommand{\surj}{\twoheadrightarrow}
\newcommand{\pt}{\text{pt}}
\newcommand{\PP}{\bbP}
\newcommand{\C}{\bbC}
\newcommand{\N}{\bbN}
\DeclareMathOperator{\fac}{fac}
\DeclareMathOperator{\pr}{pr}
 \newcommand{\ext}{\fe}
  \newcommand{\coop}{{coop}}
\newcommand{\loc}{loc}
\DeclareMathOperator{\SH}{\calS\calH}
\newenvironment{romenum}
{

\begin{enumerate}}{\end{enumerate}}
 \gdef\Young(#1){\hbox{$\vcenter
 {\mathcode`,="8000\mathcode`|="8000
  \def,{\global\advance\cols by 1 &}%
  \def|{\cr
        \multispan{\the\cols}\hrulefill\cr
        &\global\cols=2 }%
  \offinterlineskip\everycr{}\tabskip=0pt
  \dimen0=\ht\strutbox \advance\dimen0 by \dp\strutbox
  \halign
   {\vrule height \ht\strutbox depth \dp\strutbox##
    &&\hbox to \dimen0{\hss$##$\hss}\vrule\cr
    \noalign{\hrule}&\global\cols=2 #1\crcr
    \multispan{\the\cols}\hrulefill\cr%
   }
 }$}}
\begin{document}
\title{Cohomological Hall algebras and affine quantum groups}
\date{\today}

\author[Y.~Yang]{Yaping~Yang}
\address{School of Mathematics and Statistics, The University of Melbourne, 813 Swanston Street, Parkville VIC 3010, Australia}
\email{yaping.yang1@unimelb.edu.au}

\author[G.~Zhao]{Gufang~Zhao}
\address{Department of Mathematics,
University of Massachusetts, Amherst, MA, 01003, USA}
\email{zhao@math.umass.edu}

\subjclass[2010]{
Primary 17B37;  	%Quantum groups (quantized enveloping algebras) and related deformations
Secondary 
14F43,   %	Other algebro-geometric (co)homologies (e.g., intersection, equivariant, Lawson, Deligne (co)homologies)
55N22.%   	Bordism and cobordism theories, formal group laws
}
\keywords{Quantum group, shuffle algebra, Hall algebra, Yangian, Drinfeld double.}

\begin{abstract}
We study the preprojective cohomological Hall algebra (CoHA) introduced by the authors in \cite{YZ1} for any quiver $Q$ and any one-parameter formal group $\mathbb{G}$. In this paper, we construct a comultiplication on the CoHA, making it a bialgebra.  We also construct the Drinfeld double of the CoHA. The Drinfeld double is a quantum affine algebra of the Lie algebra $\fg_Q$ associated to $Q$, whose quantization comes from the formal group $\bbG$. 
We prove, when the group $\bbG$ is the additive group, the Drinfeld double of the CoHA is isomorphic to the Yangian.
\end{abstract}

\maketitle
%\tableofcontents
\section{Introduction}
In this paper, we construct comultiplications on certain cohomological Hall algebras, making them bialgebras. We also construct the Drinfeld double of these bialgebras. This gives a uniform way to construct both old and new affine-type quantum groups.

The cohomological Hall algebra involved, called the preprojective cohomological Hall algebra (CoHA for short) and denoted by $\calP(Q, A)$ or simply $\calP$,  is associated to a quiver $Q$ and an algebraic oriented cohomology theory $A$. The construction of the preprojective CoHA is given  in \cite{YZ1}, as a generalization of the $K$-theoretic Hall algebra of commuting varieties studied by Schiffmann-Vasserot in \cite{SV2}.   The preprojective CoHA is defined to be 
the $A$-homology of the moduli of representations of the preprojective algebra of $Q$. It has the same flavor as the cohomological Hall algebra associated to  quiver with potential defined by Kontsevich-Soibelman \cite{KS}. 
The authors also construct an action of $\calP(Q, A)$ on the $A$-homology of Nakajima quiver varieties associated to $Q$ in \cite{YZ1}.

Let $\fg_Q$ be the corresponding symmetric Kac-Moody Lie algebra of $Q$ and $\fb_Q\subset \fg_Q$ be the Borel subalgebra. 
As is shown in \cite{YZ1}, a certain spherical subalgebra in an extension of  $\calP(Q,A)$, denoted by $\calP^{\sph,\ext}(Q,A)$ or $\calP^{\sph,\ext}$, is a quantization of (a central extension of) $U(\fb_Q[\![u]\!])$, where the quantization depends on the underlying formal group law of $A$.
As in the case of quantized enveloping algebra of $\fg_Q$, the Drinfeld double of the Borel subalgebra should be the entire quantum group. This is the subject of the present paper. We construct a coproduct on $\calP(Q,A)$, and define the Drinfeld double, which is a quantization of  $U(\fg_Q[\![u]\!])$. 
Assume $A_{\Gm}(\pt)\cong \calO(\bbG)$ for some 1-dimensional algebraic group or formal group $\bbG$. When $\bbG$ is an affine algebraic group, the Drinfeld double of $\calP(Q,A)$ recovers the Drinfeld realization of the quantization of the Manin triple associated to $\bbP^1$ as described in \cite[\S~4]{D86}. 
When $\bbG$ is a formal group which does not come from an algebraic group, this gives new affine quantum groups which have not been studied in literature.  In the case when $\bbG$ is the elliptic curve, the method here gives a Drinfeld realization of the elliptic quantum group  of \cite{Fed} which is previously unknown.

In this paper, we focus on the purely algebraic description of $\calP(Q,A)$ in terms of shuffle algebra  without explicit reference to Nakajima quiver varieties. The shuffle algebra is reviewed in detail in 
 \S~\ref{sec:shuffle}.
 The algebra $\calP^{\sph,\ext}$ has two deformation parameters $t_1,t_2$, coming from a 2-dimensional torus action.
Let $\underline\calP^{\sph,\ext}$ be the quotient of $\calP^{\sph,\ext}$ by the torsion part over the $(\Gm)^2$-equivariant parameters $t_1,t_2$.
In \S~\ref{sec:coprod} of the present paper, 
we prove the following.
\begin{thm}\label{ThmIntr_A}
 There is a comultiplication $\Delta: \underline\calP^{\sph,\ext}\to \underline\calP^{\sph,\ext}\widehat{\otimes}\underline\calP^{\sph,\ext}$, making $\underline\calP^{\sph,\ext}$ a bialgebra. 
\end{thm}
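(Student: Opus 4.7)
My approach is to work in the explicit shuffle-algebra realization of $\calP^{\sph,\ext}$ reviewed in Section~\ref{sec:shuffle}, and to write down the coproduct at the level of shuffle elements by partitioning the shuffle variables into two groups. Given a homogeneous shuffle element $P(x_1,\dots,x_n)$ of dimension vector $\bfn$ and a splitting $\bfn=\bfn'+\bfn''$, the coproduct component $\Delta_{\bfn',\bfn''}(P)$ will be obtained as the Laurent expansion of
\[
\frac{P(y_1,\dots,y_{|\bfn'|}\,;\,z_1,\dots,z_{|\bfn''|})}{\prod_{i,j}\kappa(z_j,y_i)},
\]
where the $y_i$ and $z_j$ are the shuffle variables on the two tensor factors, $\kappa$ is the rational scattering kernel governing the shuffle multiplication (built from the formal group law of $\bbG$ and from the adjacency data of $Q$), and the expansion is taken in a specific region so that each coefficient lands in $\underline\calP^{\sph,\ext}\widehat\otimes\underline\calP^{\sph,\ext}$. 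On the ``ext'' side, the Cartan/extension generators are declared group-like up to a twist dictated by the same kernel $\kappa$, so as to be consistent with the formula above.

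There are then three items to verify. \emph{First}, well-definedness: the expression above is a priori a rational function, and I must show that its Laurent expansion lies in the correct completed tensor product and that the components on each factor satisfy the wheel/pole conditions cutting out $\underline\calP^{\sph,\ext}$ inside the ambient shuffle algebra. This is where the passage to the torsion-free quotient $\underline{\ \cdot\ }$ is essential: it lets one clear the denominators $\kappa(z_j,y_i)$ without losing information. \emph{Second}, coassociativity $(\Delta\otimes\id)\Delta=(\id\otimes\Delta)\Delta$, which I expect to reduce to a symbolic rational-function identity in three groups of variables and to follow from a multiplicativity property of $\kappa$. \emph{Third}, the algebra-morphism property $\Delta(P\ast Q)=\Delta(P)\ast\Delta(Q)$, which will amount to reconciling the shuffle symmetrization performed by the product on the left with the mixing of the two groups of variables on the right; this is a longer but essentially combinatorial check once the kernel identities behind the shuffle product are available.

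The main obstacle I expect is the first item, and more precisely the identification of the target completion. One needs both to show that the geometric-series expansions of $1/\kappa(z_j,y_i)$ converge in the natural topology of $\underline\calP^{\sph,\ext}\widehat\otimes\underline\calP^{\sph,\ext}$ (for which the choice of expansion region has to be made coherently across dimension vectors), and to show that the wheel conditions characterizing the spherical subalgebra are preserved factor by factor. This is the point at which the interaction between the formal group $\bbG$, the combinatorics of $Q$, and the torsion-free quotient really enters. Once this is settled, the counit is the obvious projection onto the zero-dimension-vector component, the unit/counit axioms are immediate from the formula, and coassociativity together with the bialgebra axiom become symbolic manipulations on rational functions grouped into two or three blocks of variables.
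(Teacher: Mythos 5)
Your overall strategy is the one the paper follows: realize $\underline\calP^{\sph,\ext}$ as the extended shuffle algebra $\SH^{\sph,\ext}$ and define a Drinfeld-type coproduct by splitting the shuffle variables into two blocks and dividing by the shuffle kernel. But as written the proposal has two concrete problems. First, your displayed formula $P(y;z)/\prod_{i,j}\kappa(z_j,y_i)$ is missing the Cartan dressing factor: the paper's formula \eqref{eq:coprod} carries the product $H_{[1,v_1]}(x_{[v_1+1,v]})$ of Cartan series in the numerator, and the Cartan generators themselves are \emph{exactly} group-like, $\Delta(H_k(w))=H_k(w)\otimes H_k(w)$, not ``group-like up to a twist''. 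The twist lives elsewhere, in the semidirect-product relation \eqref{eqn:conj}, $H_k(w)\,g\,H_k(w)^{-1}=g\,\widehat{\Phi}_k(w,v)$, and it is precisely the identity $H_{B_1}(z_{B_2})P(z_{A_1})=P(z_{A_1})H_{B_1}(z_{B_2})\widehat{\Phi}(z_{B_2}|z_{A_1})$ that makes the multiplicativity computation close up: the factor $\widehat{\Phi}=\fac(z_{B_2}|z_{A_1})/\fac(z_{A_1}|z_{B_2})$ produced by commuting the Cartan dressing past $P$ is exactly what converts the ratio $\fac(z_A|z_B)/\fac(z_{A_2\sqcup B_2}|z_{A_1\sqcup B_1})$ into the product $\fac(z_{A_1}|z_{B_1})\fac(z_{A_2}|z_{B_2})$ appearing in $\Delta(P)\star\Delta(Q)$. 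Without the $H$-factor the check already fails in degree $e_k$: one would get the primitive coproduct $f\otimes 1+1\otimes f$, which is not a homomorphism for the shuffle product. You should also record the sign $(-1)^{(v_2,\overline{C}v_1)}$, which must match the one in the product.

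Second, the well-definedness step is organized differently from what you propose, and your version would be hard to carry out. The paper does not expand $1/\fac$ as a geometric series, does not use wheel conditions (the spherical subalgebra is defined by its generators in degrees $e_k$, not cut out by pole conditions), and does not use the torsion-free quotient to clear denominators --- that quotient enters only through Proposition~\ref{prop:Shuf_CoHA}, to identify $\underline\calP^{\sph,\ext}$ with $\SH^{\sph,\ext}$. Instead one first defines $\Delta$ with values in the localization away from the null-divisors of $\fac$, proves the three bialgebra axioms there (Theorem~\ref{thm:coprod}), and only then observes that on a generator $f\in\SH_{e_k}$ formula \eqref{eq:delta f} gives $\Delta(f)=H_k\otimes f+f\otimes 1$, whose only poles come from $H_k$ and hence expand into the completion taken with respect to powers of $\fl(h^{(k)})/\fl(x^{(s)})$; multiplicativity then forces $\Delta(\SH_v^{\sph})$ into $\sum_{v_1+v_2=v}\SH^{\ext}_{v_1}\widehat\otimes\SH^{\ext}_{v_2}$. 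Restricting to the spherical subalgebra is therefore not a technical convenience but the mechanism by which the $\fac$-denominators disappear.
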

Here the completion $\underline\calP^{\sph,\ext}\widehat{\otimes}\underline\calP^{\sph,\ext}$ is explained in \S~\ref{sec:coprod}. In particular, 
 the comultiplication $\Delta$ is a formal power series. However, it converges to a rational map from $\calP^{\sph,\ext}$ to $\underline\calP^{\sph,\ext}\otimes\underline\calP^{\sph,\ext}$, the explicit formula of which is given in  \S~\ref{sec:coprod}. 

In \S~\ref{sec:pairing},  we construct the Drinfeld double of the bialgebra $\underline\calP^{\sph,\ext}$, denoted by $D(\underline\calP^{\sph,\ext})$, which is a quantization of  $U(\fg_Q[\![u]\!])$ (in a sense explained in Remark~\ref{rmk:degeneration}), 
when $\bbG$ is an algebraic group over $\C$ endowed with a one form $\omega$. Since $\underline\calP^{\sph,\ext}$ is infinite-dimensional, we need to define a non-degenerate  bialgebra pairing in order to define the Drinfeld double. The natural  pairing is a residue pairing on a certain  ad\`ele version of $\underline\calP^{\sph,\ext}$, which is also a bialgebra with multiplication and comultiplication given by the same formulas as those on $\underline\calP^{\sph,\ext}$, the precise definition of which can be found in \S~\ref{subsec:paring_on_adele}.

In \S~\ref{sec;Yangian},  we study in detail this Drinfeld double $D(\underline\calP^{\sph,\ext})$ in the example
 when the group $\bbG$ is additive. 
A quotient of $D(\underline\calP^{\sph,\ext})$ is called the reduced Drinfeld double (defined in \S~\ref{symmetricYangian}),  denoted by $\overline{D}(\underline\calP^{\sph,\ext})$. 
\begin{thm}\label{introthm:Yangian}
Let $Y_\hbar(\fg_Q)$ be the Yangian endowed with the Drinfeld comultiplication.
Assume $Q$ has no edge-loops. Then there is a bialgebra epimorphism from $Y_\hbar(\fg_Q)$ to $\overline{D}(\underline\calP^{\sph,\ext})|_{t_1=t_2=\hbar/2}$.
When $Q$ is of finite type, this map is an isomorphism. 
\end{thm}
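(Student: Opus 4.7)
The plan is to produce the map $\Phi\colon Y_\hbar(\fg_Q)\to \overline{D}(\underline\calP^{\sph,\ext})|_{t_1=t_2=\hbar/2}$ on Drinfeld's new-realization generators and then check relations, surjectivity, and (in finite type) injectivity. First I would fix the Drinfeld presentation of $Y_\hbar(\fg_Q)$: generators $x_{i,r}^{\pm}$, $\xi_{i,r}$ for $i\in Q_0$, $r\in\ZZ_{\geq 0}$, together with the current relations (Cartan commutativity, the $[\xi_i(z),x_j^{\pm}(w)]$ rational relation, the $[x_i^{+}(z),x_j^{-}(w)]$ relation governed by the symmetrized Cartan matrix, and the degree-$(1-a_{ij})$ Serre relation). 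On the CoHA side, recall from \S\ref{sec:shuffle} that $\calP^{\sph,\ext}$, and hence its specialization at $t_1=t_2=\hbar/2$, is generated by the degree-$e_i$ elements $e_{i,r}:=x^r\in \calP_{e_i}$ together with the Cartan--extension piece; this observation determines candidate images $\Phi(x_{i,r}^{+})=e_{i,r}$, while $\Phi(x_{i,r}^{-})$ is set to be the corresponding generator of the opposite copy inside the Drinfeld double, and $\Phi(\xi_{i,r})$ is read off from the Cartan currents produced by the bialgebra pairing of \S\ref{sec:pairing}.

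Second, I would verify the defining relations. The Cartan-with-current relation $[\xi_i(z),x_j^{\pm}(w)]$ and the commutativity of the Cartan currents follow directly from the definition of the reduced double, since $\xi_i(z)$ is by construction the logarithm of the group-like Cartan series whose adjoint action on $e_j(w)$ is computed by the Euler form specialized at $t_1=t_2=\hbar/2$, producing exactly the rational factor $\frac{z-w+\hbar a_{ij}/2}{z-w-\hbar a_{ij}/2}$. The positive Serre relations and the shuffle Serre relations coincide under the specialization $t_1=t_2=\hbar/2$: this is the residue-at-the-wheels calculation in the shuffle model, and is where the hypothesis "$Q$ has no edge loops" is needed, since that ensures the wheel conditions cut out precisely the Serre ideal rather than a strictly larger ideal. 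The cross-relation $[x_{i,r}^{+},x_{j,s}^{-}]=\delta_{ij}\cdot \text{(polynomial in $\xi$'s)}$ is the main obstacle: it must be extracted from the residue pairing between $\underline\calP^{\sph,\ext}$ and its ad\`elic companion defined in \S\ref{subsec:paring_on_adele}, and the fact that the Drinfeld double commutator is given by $ab-(S\otimes\id)\Delta(b)\cdot a$ when restricted to a pair of spherical generators. Here one must evaluate an explicit residue on the $\bbG_a$ ad\`ele that turns into the required $\hbar$-deformed Cartan current; this is basically a one-variable residue on $\bbP^1$ and should be handled by direct computation using the coproduct formula recalled from \S\ref{sec:coprod}.

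Third, I would establish that $\Phi$ intertwines the Drinfeld coproduct on $Y_\hbar(\fg_Q)$ with $\Delta$ of Theorem \ref{ThmIntr_A} restricted to the double. It suffices to check this on generators: for $x_{i,r}^{+}$ it reduces to the explicit formula for $\Delta$ on degree-$e_i$ elements (which matches the Drinfeld $\Delta(x_i^{+}(z))=x_i^{+}(z)\otimes 1+\xi_i^{-}(z)\otimes x_i^{+}(z)$), and similarly for $x_{i,r}^{-}$ and the Cartan currents by the residue pairing's duality. Surjectivity of $\Phi$ is then immediate from the generation statement of \S\ref{sec:shuffle}: $\overline{D}(\underline\calP^{\sph,\ext})|_{t_1=t_2=\hbar/2}$ is generated by the two spherical halves and the reduced Cartan, and every generator lies in $\Phi(Y_\hbar(\fg_Q))$.

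Finally, for the finite-type isomorphism, I would apply a PBW argument. The Yangian $Y_\hbar(\fg_Q)$ in finite type has a well-known PBW basis indexed by positive roots and nonnegative integer shifts, with graded components matching those of $U(\fg_Q[\![u]\!])$. On the other side, in finite type the shuffle algebra $\calP^{\sph,\ext}$ at $t_1=t_2=\hbar/2$ has the same graded dimension, either by comparing with the localized Nakajima-action faithfulness already established in \cite{YZ1}, or by the fact that the shuffle presentation cut out by the wheel conditions is known in finite type to be free of the same rank as the positive Yangian. Injectivity on the positive half then follows by dimension count in each graded piece; applying the same argument to the negative half and invoking the triangular decomposition of the reduced Drinfeld double yields bijectivity of $\Phi$. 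The expected main obstacle throughout is the cross-relation computation via the ad\`elic residue pairing, since it is the only place where the precise form of the Yangian commutation $\left[x_{i,r}^{+},x_{j,s}^{-}\right]$ must emerge from a nontrivial pairing calculation rather than from a formal manipulation.
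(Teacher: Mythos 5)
Your proposal follows essentially the same route as the paper: define $\Phi$ on the Drinfeld generators, identify the cross-relation $[x_i^+(u),x_j^-(v)]$ as the one relation that must be extracted from the ad\`elic residue pairing at $\infty\in\bbP^1$ (this is exactly the paper's Theorem~\ref{thm: rel DSH}, computed via the double relation \eqref{eq: commut rel}), check coproduct compatibility on generators against the Drinfeld coproduct (Proposition~\ref{prop:Yangian coproduct}), get surjectivity from generation by the spherical halves and the Cartan part, and get finite-type bijectivity from the triangular decomposition together with the isomorphism on each half. The only divergence is that you propose to re-verify the positive-half relations (Serre relations via wheel conditions, graded dimension counts) from scratch, whereas the paper simply inherits the epimorphism $Y_\hbar^{\geq 0}(\fg_Q)\surj \SH^{\sph,\ext}|_{t_1=t_2=\hbar/2}$ and its finite-type injectivity from \cite[Theorem~D]{YZ1} -- a presentational difference, not a different argument.
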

The morphism in Theorem \ref{introthm:Yangian} is expected to be an isomorphism for a more general class of quivers, which includes the affine Dynkin quivers. We will investigate this in a future publication \cite{GYZ}, based on the results of the present paper.
As far as we know, this is the first construction of the Yangian as a Drinfeld double of the  Borel subalgebra $Y_\hbar^{\geq 0}(\fg_Q)$. 
The double Yangian, which is the Drinfeld double of Yangian, can also be realized in a similar way. (See Remark~\ref{rmk:doubleYang} for details.)

Historically, there are a number of attempts to construct quantum groups using shuffle algebras. 
Rosso \cite{Ros} constructed the quantized enveloping algebra $U_q(\fg_Q)$ as a subalgebra in a suitable shuffle algebra, as an intrinsic description of $U_q(\fg_Q)$. This description turns out to be useful in the study of $q$-characters of quiver Hecke algebras \cite{Lec}.  
In a seminal note \cite{Gr2}, Grojnowski outlined the idea of constructing the positive part of the quantum loop algebra in terms of certain $K$-theoretic  Hall algebra, as well as the relation with Lusztig's construction of the quantized enveloping algebra using perverse sheaves \cite{L91}. As mentioned in \cite{Gr2}, Feigin in an unpublished work also has a construction of quantum groups in terms of symmetric polynomial ring. 
In \cite{Neg14} a $K$-theoretic shuffle algebra associated to the Jordan quiver has been studied. In particular, a comultiplication on this shuffle algebra has been constructed, making it a bialgebra.

In the present paper, we work in the generality that $\bbG$ is any 1-dimensional affine algebraic group or 1-parameter formal group.
Similar results hold when $\bbG$ is not necessarily affine. 
In \cite{YZ2}, we study in detail the case when $\bbG$ is the elliptic curve. 
We construct the sheafified shuffle algebra. The corresponding cohomology is the equivariant elliptic cohomology of \cite{Gr, GKV95}. When $\bbG$ is the universal elliptic curve over the (open) moduli space $\calM_{1,2}$ of 
genus 1 curves with 2 marked points, the space of suitable rational sections of this algebra object coincides with Felder's elliptic quantum group \cite{Fed, GTL15}. The precise statements and proofs will be published in \cite{YZ2}.

\subsection*{Acknowledgment}
We thank the anonymous referee for helpful comments. Most of the work was done when both authors were temporary faculty members at the University of Massachusetts, Amherst.

\section{The shuffle algebra}
\label{sec:shuffle}
In this section, we recall the algebraic description of the cohomological Hall algebra $\calP(Q, A)$ in terms of the shuffle algebra. The definition and  the multiplication formula of shuffle algebra is recalled in  \S~\ref{subsec:shuffle}. Relation with the cohomological Hall algebra $\calP(Q, A)$ is recalled in \S~\ref{subsec:relation with P}. 
\subsection{Formal group algebras}
We first fix the notations and general setup. 
Let $R$ be a commutative ring of characteristic zero. Let $\bbG$ be either a connected 1-dimensional affine algebraic group or a 1-dimensional formal group over $\Spec R$. The coordinate ring of $\bbG$ is denoted by $\mathbf{S}$, in particular, $\mathbf{S}$ is an $R$--algebra. Let $\fl\in \mathbf{S}$ be a local uniformizer of $\bbG$ at the identity section. 
Note that if $\bbG$ is an algebraic group, the expansion of the group multiplication of $\bbG$ with respect to the local uniformizer $\fl$ gives a formal group law.

For a natural number $n\in \bbN$, let $\fS_n$ be the symmetric group of $n$ letters.
When $\bbG$ is a 1-dimensional algebraic group, let $\bbG^n$ be the algebraic group whose coordinate ring is $\mathbf{S}^{\otimes n}$, denoted by $\mathbf{S}^n$; let $\bbG^{(n)}$ be the affine variety over $R$ whose coordinate ring is $(\mathbf{S}^{\otimes n})^{\fS_n}$, denoted by $\mathbf{S}^{(n)}$, i.e., $\bbG^{(n)}=\bbG^n/\fS^n$.
Similarly, when $\bbG$ is a 1-dimensional formal  group, we define $\bbG^n$ to be $\Spf( \mathbf{S}^{\widehat{\otimes}n})$, and define $\bbG^{(n)}$ to be $\Spf((\mathbf{S}^{\widehat{\otimes} n})^{\fS_n})$; here $ \mathbf{S}^{\widehat{\otimes}n}$ is still denoted by $\mathbf{S}^n$
and $(\mathbf{S}^{\widehat{\otimes} n})^{\fS_n}$ by $\mathbf{S}^{(n)}$.

More canonically,  let $\Lambda_n$ be a lattice of rank $n$, with a natural $\fS_n$-action. Then $\bbG^n$ is the set of homomorphisms of abelian groups $\Hom(\Lambda_n^\vee,\bbG)$, which naturally has the structure as an algebraic group. 
For any group homomorphism $\lambda:\Lambda_n \to \bbZ$, let $\chi_\lambda: \bbG^n=\Hom(\Lambda_n^\vee,\bbG)\to \bbG$ be the induced morphism. 
For the local uniformizer $\fl\in \mathbf{S}$, we denote $\fl(\lambda)\in \mathbf{S}^{n}$ the pullback of $\fl$ along $\chi_\lambda$.
Choose a natural coordinate $(x_1,\cdots,x_n)$ of $\Lambda_n$, such that the action of $\fS_n$ is permuting the coordinate $(x_1,\cdots,x_n)$. 
We have a group homomorphism $ \lambda_{ij}: \Lambda_n\to \bbZ$, defined as $(x_1,\cdots,x_n) \mapsto x_i-x_j$. 
Let $\fl(x_i-x_j)\in \mathbf{S}^n$ be the function on $\bbG^n$ associated to this group homomorphism. 
Similarly the function $\fl(x_i-x_j-t_1-t_2)\in \mathbf{S}^{n+2}$ is the function associated to the group homomorphism $\lambda_{ij, t_1, t_2}: \Lambda_n\oplus\bbZ^2\to \bbZ$ given by $(x_1,\cdots,x_n, t_1, t_2)\mapsto  x_i-x_j-t_1-t_2$. 

For simplicity, later on we will denote the set of integers $\{1,2,\dots,n\}$ by $[1,n]$, so that $(x_1,\cdots,x_n, t_1, t_2)$ is simply denoted by $((x_i)_{i\in [1,n]},t_1,t_2)$.

Note that $\fl(x_i+x_j)$ can be expressed in terms of the formal group laws. That is, let $+_F$ be the formal group law given by $(\bbG,\fl)$, then $+_F$ is determined by the property that $\fl(x_i+x_j)=\fl(x_i)+_F\fl(x_j)$.

\subsection{Shuffle algebra associated to a quiver}
\label{subsec:shuffle}
Let $Q$ be a quiver with vertex set $I$ and arrow set $H$. 
For each arrow $h\in H$, we denote by $\inc(h)$ (resp. $\out(h)$) the incoming (resp. outgoing) vertex of $h$.
We assume in this paper that $I$ and $H$ are finite sets. 
The opposite quiver $Q^{\op}$ is a quiver with the same set of vertices $I$, and the set of arrows $H^{\op}$ is endowed with a bijection $*:H\to H^{\op}$,  so that for each $h\in H$, the corresponding arrow $*(h)$, denoted by  $h^*$ for simplicity, has the reversed orientation as $h$. Thus, we have $\inc(h)=\out(h^*)$, and $\out(h)=\inc(h^*)$.
Let $\overline{A}=(a_{kl})_{k, l\in I}$ be the matrix whose $(k, l)$th entry is
$a_{kl}=\#\{h\in H^{\op} \mid \inc(h)=k, \out(h)=l\}$ for $k, l\in I$. Let $\overline C:=\id-\overline{A}$.
A dimension vector $v=(v^i)_{i\in I}\in \bbN^I$ of $Q$ is a collection of natural numbers, one for each vertex.

For each quiver $Q$, and each 1-dimensional affine algebraic group $\bbG$, we recall the definition of the shuffle algebra  associated to $Q$ and  $\bbG$ defined in \cite[\S 3]{YZ1}, denoted by $\SH_Q^\bbG$, or simply $\SH$ if both $Q$ and $\bbG$ are clear from the context. 
\Omit{The algebra $\SH_Q^\bbG$ has two quantization parameters $t_1, t_2$. Geometrically these two quantization parameters come from the two dimensional torus $T=\G_m^2$ action on the cotangent bundle of representation space of the quiver $Q$. }

Let $R_{t_1,t_2}$ be the coordinate ring of $\bbG^2=\Hom(\Lambda_2^{\vee}, \bbG)$, where $(t_1,t_2)$ are the coordinates of $\Lambda_2$. The shuffle algebra $\calS\calH$ is an $\bbN^I$-graded $R_{t_1,t_2}$-algebra. As an $R_{t_1,t_2}$-module, we have 
\[
\calS\calH=\bigoplus_{v\in\bbN^I}\calS\calH_v,\] where the degree $v$ piece $\calS\calH_v=\calO(\bbG^{(v)})$ is the coordinate ring of $\bbG^{(v)}:=\bbG^2\times\prod_{i\in I}\bbG^{(v^i)}$.
We will also consider $\SH_v$ as the space of $\fS_v=\prod_{i\in I}\fS_v$-invariant functions on $\bbG^{v}:=\bbG^2\times\prod_{i\in I}\bbG^{v^i}$, where $\fS_v$ acts on $\bbG^{v}$ in the natural way. 
The coordinates for $\bbG^{v}$ are denoted by $(t_1,t_2,(x^{(i)}_s)_{i\in I, s\in[1,v^i]})$, where $(t_1,t_2)$ are the coordinates for $\bbG^2$ and for each $i\in I$, the coordinates for $\bbG^{v^i}$ are $(x^{(i)}_1,\dots,x^{(i)}_{v^i})$.

In order to describe the multiplication of $\calS\calH$, we introduce some notations. For any dimension vector $v\in \bbN^I$, a partition of $v$ is a pair of collections $A=(A^i)_{i\in I}$ and $B=(B^i)_{i\in I}$, where  $A^{i}, B^{i} \subset [1, v^{i}]$ for any $i\in I$, satisfying the following conditions:
$A^i\cap B^i=\emptyset$, and  
 $A^{i} \cup B^{i}=[1, v^{i}]$. We use the notation $(A,B)\vdash v$ to mean $(A, B)$ is a partition of $v$. 
  
We also write $|A|=v$ if $|A^i|=v^i$ for each $i\in I$.  
For any two dimension vectors $v_1, v_2\in \bbN^I$ such that $v_1+v_2=v$, we introduce the notation  \[\bfP(v_1,v_2):=\{(A,B)\vdash v\mid |A|=v_1,|B|=v_2\}. \]
 There is a standard element $(A_o,B_o)$ in $\bfP(v_1,v_2)$ with 
$A_o^{i}:=[1, v_1^i]$, and 
$B_o^{i}:=[v_1^i+1, v_{1}^i+v_2^i]$ for any $i\in I$. This standard element will also be denoted by $([1,v_1],[v_1+1,v])$ for short.

Let $m:H\coprod H^{\op}\to \bbZ$ be a function, which for each $h\in H$ provides two integers $m_h$ and $m_{h^*}$. We will consider specializations of $(t_1,t_2)\in \bbG^2$ which are compatible with the function $m$ in the following sense. 
\begin{assumption} \label{Assu:WeghtsGeneral}
We consider specializations of $t_1$ and $t_2$ which are compatible with the integers $m_h, m_{h^*}$ for any $h\in H$, in the sense that $t_1^{m_h}t_2^{m_{h^*}}$ is a constant, i.e., does not depend on $h\in H$.
\end{assumption}

\begin{remark}\label{rmk:weights}Two examples of the integers $m_h,m_{h^*}$ satisfying Assumption~\ref{Assu:WeghtsGeneral} are the following. 
\begin{enumerate}
\item Let $t_1$ and $t_2$ are independent variables, but $m_h=m_{h^*}=1$ for any $h\in H$.
\item Specialize $t_1=t_2=\hbar/2$. For any pair of vertices $i$ and $j$ with arrows  $h_1, \dots, h_a$ from $i$ to $j$, the pairs of integers are $m_{h_p}=a+2-2p$ and $m_{h_p^*}=-a+2p$. 
\end{enumerate}
 \end{remark}
Consider the following two elements in the localization of 
$\calO(\mathbb{G}^{(v_1)}\times_{\bbG^2} \mathbb{G}^{(v_2)} )$.
Recall that the ring $\calO(\mathbb{G}^{(v_1)}\times_{\bbG^2} \mathbb{G}^{(v_2)} )$ is isomorphic to $\mathbf{S}^{(v_1)}\otimes_{R_{t_1,t_2}} \mathbf{S}^{(v_2)}$ if $\bbG$ is an algebraic group, and to $\mathbf{S}^{(v_1)}\widehat\otimes_{R_{t_1,t_2}} \mathbf{S}^{(v_2)}$ if $\bbG$ is a formal  group. 
For $(A, B)\in \bfP(v_1,v_2)$, we
define
\begin{align*}
& \fac_1(x_A|x_B):=\prod_{i \in I}
\prod_{s\in A^{i}}
\prod_{t\in B^i}
\frac{\fl(x^{(i)}_s-x^{(i)}_t+t_1+t_2)}{\fl(x^{(i)}_t-x^{(i)}_s)},\\
& \begin{small}
\fac_2(x_A|x_B):=\prod_{h\in H}\Big(
\prod_{s\in A^{\out(h)}}
\prod_{t\in B^{\inc(h)}}
\fl(x_t^{ \inc(h)}-x_s^{\out(h)}+ m_h t_1)
\prod_{s\in A^{\inc(h)}}
\prod_{t\in B^{\out(h)}}
\fl(x_t^{\out(h)}-x_s^{\inc(h)}+m_{h^*}t_2)
\Big).\end{small}
\end{align*}
For example, when $m_h=m_{h^*}=1$, the $\fac_2(x_A|x_B)$ simplifies as
\begin{equation*}
\fac_2(x_A|x_B)=\prod_{i, j\in I}
\prod_{s\in A^{i}}
\prod_{t\in B^{j}} \fl(x^{(j)}_t-x^{(i)}_s+ t_1)^{a_{ij}}
\fl(x^{(j)}_t-x^{(i)}_s+ t_2)^{a_{ji}}, 
\end{equation*} where $a_{ij}$ is the number of arrows from $i$ to $j$ of quiver $Q$.
We make the convention that if $A^i$ or $B^i$ is the empty set for some $i\in I$, then the corresponding factor
$\prod_{s\in A^{i}}
\prod_{t\in B^i}
\frac{\fl(x^{(i)}_s-x^{(i)}_t+t_1+t_2)}{\fl(x^{(i)}_t-x^{(i)}_s)}$ in $ \fac_1(x_A|x_B)$ is understood as $1$; and similarly for $\fac_2(x_A|x_B)$. Define $\fac(x_A|x_B)=\fac_1(x_A|x_B) \cdot \fac_2(x_A|x_B).$

We now describe a multiplication on $\SH$. As $\SH$ is an $\bbN^I$-graded algebra, we only need to describe the multiplication on homogeneous elements and then extend by linearity. 
For two dimension vectors $v_1, v_2\in \bbN^I$,
we describe the map $\SH_{v_1}\otimes\SH_{v_2}\to \SH_{v_1+v_2}$ as follows. 
We consider  $\SH_{v_1+v_2}$ as a subspace of $\calS\calH_{v_1}\otimes \calS\calH_{v_2}$, where the algebra embedding is given by pulling back functions via the natural projection $\bbG^{(v_1)}\times \bbG^{(v_2)} \to \bbG^{(v_1+v_2)}$. For $f_1\in \SH_{v_1}$, $f_2\in \SH_{v_2}$, we first  consider $f_1\otimes f_2\in \calS\calH_{v_1}\otimes \calS\calH_{v_2}$ as a function on $\bbG^{(v_1)}\times \bbG^{(v_2)}$, which in turn can be considered as a function on $\bbG^v$, invariant under the group $\fS_{v_1}\times \fS_{v_2}$. 
Let $(A_o, B_o)\in \bfP(v_1,v_2)$ be the standard element. Then 
$f_1\otimes f_2$ as a function on $\bbG^v$ can be written as $f_1(x_{A_o})\otimes f_2(x_{B_o})$. The product of $f_1(x_{A_o})$ and $f_2(x_{B_o})$  is then defined to be 
\footnote{There is a discrepancy between the notation here and \cite{YZ1}. The algebra denoted by $\SH$ in the present paper, i.e., the one whose multiplication has a sign $(-1)^{(v_2, \overline{C} v_1)} $ coming from the Euler-Ringel form, is  denoted by $\widetilde{\calS\calH}$ in \cite[\S 5.6]{YZ1}. We adapt this convention since the one without sign is not used in the present paper.}
\begin{align}\label{shuffle formula}
f_1(x_{A_o})\star f_2(x_{B_o})
=&\sum_{\sigma\in\Sh(v_1,v_2)}(-1)^{(v_2, \overline{C} v_1)}\sigma\Big(f_1(x_{A_o})\cdot f_2(x_{B_o})
\cdot \fac(x_{A_o}|x_{B_o})\Big)\notag\\
=&\sum_{(A, B)\in\bfP(v_1,v_2)} (-1)^{(v_2, \overline{C} v_1)} f_1(x_A)\cdot f_2(x_B)\cdot \fac(x_A|x_B) .
\end{align}
Here $\Sh(v_1,v_2)$ is the subset of $\prod_{i\in I}\fS_{v^i}$ consisting of elements preserving the order of elements in $A_o$ and $B_o$.
Note that although $\fac(x_A|x_B)$ has a simple pole, the product $f_1(x_{A_o})\star f_2(x_{B_o})$ is well-defined in $\mathbf{S}^{(v_1+v_2)}$.

\begin{example}
When $\bbG$ is a formal group whose coordinate ring is $\mathbf{S}=R[\![ x]\!]$. The shuffle algebra can be identified as 
\[
\calS\calH_{Q}^{\bbG}=\bigoplus_{v\in \N^I} R[\![t_1,t_2]\!]
[\![x^{(i)}_s]\!]_{i\in I, s=1,\dots, v^i}^{\fS_v}.\]
To describe the multiplication, it suffices to take $f_1(x') \in \calS\calH_{v_1}$, and $f_2(x'') \in \calS\calH_{v_2}$ and compute $f_1(x') \star f_2(x'') $ using \eqref{shuffle formula}. 
We consider $\calS\calH_{v_1}\otimes \calS\calH_{v_2}$ as a subspace of \[R[\![t_1,t_2]\!][\![x^{(i)}_s]\!]_{i\in I, s=1,\dots, (v_1+v_2)^i}\] by sending $x'^i_s$ to $x^i_s$, and $x''^i_t$ to $x^i_{t+v_1^i}$.
Formula \eqref{shuffle formula} then yields the following formula of $f_1(x')\star f_2(x'')$:
\begin{align*}
\sum_{\sigma\in\Sh(v_1,v_2)}&\sigma\Big(f_1f_2
\prod_{i\in I}\prod_{s=1}^{v_1^i}
\prod_{t=1}^{v_2^i}\frac{x\rq{}^i_s-_{F}x\rq{}\rq{}^i_t+_Ft_1+_Ft_2}{x\rq{}\rq{}^i_t-_{F}x\rq{}^i_s} 
\prod_{h\in H}\big(
\prod_{s=1}^{v_1^{\out(h)}}
\prod_{t=1}^{v_2^{\inc(h)}}
(x_t^{'' \inc(h)}-_{F}x_s^{'\out(h)}+_{F} m_h \cdot t_1)\\&
\prod_{s=1}^{v_1^{\inc(h)}}
\prod_{t=1}^{v_2^{\out(h)}}
(x_t^{''\out(h)}-_{F}x_s^{'\inc(h)}+_{F}m_{h^*}\cdot t_2)
\big)\Big)\in R[\![t_1,t_2]\!]
[\![x^i_s]\!]_{i\in I, s=1,\dots, (v_1+v_2)^i}^{\fS_{v_1+v_2}}.
\end{align*}

When $\bbG$ is the additive group $\bbG_a$, we have $\calS\calH_{Q}^{\bbG_a}=\bigoplus_{v\in \N^I} R[t_1,t_2]
[x^{(i)}_s]_{i\in I, s=1,\dots, v^i}^{\fS_v}.$ The multiplication is given by the same formula as above, while $+F$ is replaced by $+$, and $-_F$ is replaced by $-$.
\end{example}
For each $k\in I$, let $e_k$ be the dimension vector valued $1$ at vertex $k$ and zero otherwise. We define the \textit{spherical subalgebra} $\SH^{\bbG, \sph}_{Q}$ to be the subalgebra of $\SH^{\bbG}_{Q}$ generated by
$\SH_{e_k}$ as $k$ varies in $I$. 

\subsection{The extended shuffle algebra}
Let $\SH^0:=\Sym_{R}(\bigoplus_{i\in I} \mathbf{S}^{(e_i)})$ be the symmetric algebra on $\bigoplus_{i \in I}\mathbf{S}^{(e_i)}$. Here $\bigoplus_{i\in I}\mathbf{S}^{(e_i)}$ is a vector subspace in the coordinate ring of $\fh^{\bbG}:=\prod_{i\in I}\bbG$. In this subsection, we define the extended shuffle algebra using the $\SH^0$--action on $\SH$. 

For any $(A,B)\vdash v$, let $\bbG^A$ be $\bbG^2\times\prod_{i\in I}\bbG^{(|A^i|)}$ and similarly we have $\bbG^B$. There is a natural projection \[\bbG^A\times\bbG^B\to \bbG^{(v)}.\]
A function $P$ on $\bbG^{(v)}$ will be denoted by 
  $P (x_{A\cup B})$. Its pullback via the projection $\bbG^A\times_{\bbG^2}\bbG^B\to \bbG^{(v)}$ is denoted by $P(x_A\otimes x_B)$. 
For any $(A,B)\vdash v$, we introduce the following rational function on $\bbG^A\times_{\bbG^2}\bbG^B$
\begin{equation}
 \widehat{\Phi}(z_{B}|z_{A}): =
\frac{\fac(z_{B}|z_{A})}{\fac(z_{A}|z_{B})}.
\end{equation}
In the following special case, the function $ \widehat{\Phi}(z_{B}|z_{A})$ has a simple formula, which appears in \cite[\S 5.4]{YZ1}. For $k\in I$, we consider a special $(A,B)\vdash v$ with $B^{k}=\{1\}$, $A^k=[2,v^k]$, and $B^{i}=\emptyset$, $A^{i}=[1, v^{i}]$ for $i \neq k$. 
In this case 
$
\widehat{\Phi}(z_{B}|z_{A})=
\frac{\fac(z_{e_k}|z_{A})}{\fac(z_{A}|z_{e_k})}
$ will be denoted by $
\widehat{\Phi}_k(z, |A|)$. 
Recall that $a_{ik}$ is the number of arrows in $H$ from $i$ to $k$. 
Let $c_{ik}$ be the $(i, k)$--entry of the Cartan matrix of the quiver $Q$. 
That is $c_{ik}=-a_{ik}-a_{ki}$ if $k\neq i$, and $2$ if $k=i$. \Omit{
When $m_h=m_{h^*}=1$, we have 
 \begin{align*}
\widehat{\Phi_k}(z, v) &=\frac{
\prod_{t\in [1, v^k]}
\frac{\fl(z-x^k_{t}+t_1+t_2)}{\fl(x^k_t-z)}
\cdot
\prod_{\beta\in I\backslash\{k\}}
\prod_{t\in A^{\beta}} \fl(x^\beta_t-z+ t_1)^{a_{k\beta}}
\fl(x^\beta_t-z+ t_2)^{a_{\beta k}}}
{
\prod_{s\in [1, v^k]}
\frac{\fl(x^k_s-z+t_1+t_2)}{\fl(z-x^k_s)}
\cdot
\prod_{\alpha\in I\backslash\{k\}}
\prod_{s\in A^{\alpha}}
 \fl(z-x^\alpha_s+ t_1)^{a_{\alpha k}}
\fl(z-x^\alpha_s+ t_2)^{a_{k \alpha}}}
\end{align*}
When the integers $m_h$ and $m_{h^*}$ are as in Remark~\ref{rmk:weights}(2), we have: 
\begin{align*}
&=\prod_{\alpha\in I}\prod_{t=1}^{v^{\alpha}}
\frac{ \fl(z-x^\alpha_t+ (c_{k\alpha}) \frac{\hbar}{2})
}{  \fl(z-x^\alpha_t- (c_{k\alpha})\frac{\hbar}{2})
} \cdot \left(
\prod_{t=1}^{v^k}\frac{\fl(z-x^k_t-\hbar) }{ \fl(x^k_t-z+\hbar) }
\frac{\fl(z-x^k_t) }{  \fl(x^k_t-z) }\cdot
\prod_{\alpha\in I\backslash\{k\}}\prod_{t=1}^{v^{\alpha}}
\frac{  \fl(x^\alpha_t-z-(c_{k\alpha})\frac{\hbar}{2}) }
{\fl(z-x^\alpha_t+ (c_{k\alpha}) \frac{\hbar}{2})}\right)
\end{align*}}
If $\fl$ is an odd function, then $\widehat{\Phi}_k(z, v)$ can be simplified as follows. 
\begin{displaymath} 
\widehat{\Phi}_k(z, v)= 
\left\{
     \begin{array}{lr}
       \Big(\prod_{i\in I\backslash \{k\}} \prod_{j=1}^{v^{i}}\frac{\fl(z- x_{j}^{(i)}-t_1)^{a_{ik}} \fl(z- x_{j}^{(i)}-t_2)^{a_{ki}}}{\fl(z- x_{j}^{(i)}+t_2)^{a_{ik}} \fl(z- x_{j}^{(i)}+t_1)^{a_{ki}}}\Big)\cdot 
 \prod_{j=1}^{v^{k}}\frac{\fl(z- x_{j}^{(k)}+t_1+ t_2) }{\fl(z- x_{j}^{(k)}-t_1- t_2)}, \\   \phantom{12345678901234567890123456789}\text{when  $m_h=m_{h^*}=1$ as in Remark~\ref{rmk:weights}(1)}; \\
       (-1)^{v^k}
\prod_{i\in I}\prod_{t=1}^{v^{i}}
\frac{ \fl(x^{(i)}_t-z- (c_{ki}) \frac{\hbar}{2})
}{  \fl(z-x^{(i)}_t- (c_{ki})\frac{\hbar}{2})
},    \text{when $m_h$ and $m_{h^*}$ are as in Remark~\ref{rmk:weights}(2)}.
     \end{array}
   \right.
   \end{displaymath}

\Omit{
When $m_h=m_{h^*}=1$ as in Remark~\ref{rmk:weights}(1), we have  \[
\widehat{\Phi}_k(z, v)= 
\Big(\prod_{i\in I\backslash \{k\}} \prod_{j=1}^{v^{i}}\frac{\fl(z- x_{j}^{(i)}-t_1)^{a_{ik}} \fl(z- x_{j}^{(i)}-t_2)^{a_{ki}}}{\fl(z- x_{j}^{(i)}+t_2)^{a_{ik}} \fl(z- x_{j}^{(i)}+t_1)^{a_{ki}}}\Big)\cdot 
 \prod_{j=1}^{v^{k}}\frac{\fl(z- x_{j}^{(k)}+t_1+ t_2) }{\fl(z- x_{j}^{(k)}-t_1- t_2)}. 
 \] 
When the integers $m_h$ and $m_{h^*}$ are as in Remark~\ref{rmk:weights}(2), we have 
\begin{align*}
\widehat{\Phi_k}(z, v)=
(-1)^{v^k}
\prod_{i\in I}\prod_{t=1}^{v^{i}}
\frac{ \fl(x^i_t-z- (c_{ki}) \frac{\hbar}{2})
}{  \fl(z-x^i_t- (c_{ki})\frac{\hbar}{2})
}. 
\end{align*}}

We define a $\SH^0$ action on  $\SH$ as follows.
For $k\in I$, consider the rational function $H_k(w)=1+\frac{1}{\fl(h^{(k)})-\fl(w)}$ on $\bbG^{(e_k)}\times \bbG\subseteq \fh^\bbG\times\bbG$, where $h^{(k)}$ is the coordinate of $\bbG^{(e_k)}$ and $w$ is the coordinate of the second $\bbG$-factor.
Expanding $H_k(w)$ with respect of the local uniformizer  $\fl(w)$ of the  second $\bbG$-factor,  we can consider $H_k(w)$ as an element in the completion $ \mathbf{S}^{(e_k)}[\![ w ]\!]$, where $\mathbf{S}^{(e_k)}=\calO(\bbG^{(e_k)})$. 
For any $g \in \SH_{v}$, and any $k\in I$, we define
\begin{equation}\label{eqn:conj}
H_{k}(w) g  H_k(w)^{-1}:=g \widehat{\Phi_k}(w, v).
\end{equation}

\begin{lemma}\cite[Lemma 5.8]{YZ1}
\begin{enumerate}
\item The action of $\SH^0$  on $\SH$ is well-defined.
\item Furthermore, the action of $\SH^0$ on $\SH$ is by algebra homomorphisms.
\end{enumerate}
\end{lemma}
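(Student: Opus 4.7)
The plan for \emph{(1)} is to interpret the formula $H_k(w)\,g\,H_k(w)^{-1}:=g\,\widehat{\Phi}_k(w,v)$ as a commutation relation in an extended algebra, and then to verify two consistency conditions. First, I would check that $g\,\widehat{\Phi}_k(w,v)$ makes sense as an element of a suitable completion of $\SH\widehat{\otimes}\SH^0[\![w]\!]$: expanding the rational function $\widehat{\Phi}_k(w,v)$ as a power series in the local uniformizer $\fl(w)$, each denominator of the form $\fl(w-x^{(i)}_t\pm(\mathrm{const}))$ has nonvanishing constant term $\fl(-x^{(i)}_t\pm(\mathrm{const}))$ and so is an invertible power series in $\fl(w)$ with coefficients in a localization of $\mathbf{S}^{(v)}$. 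Second, since $\SH^0$ is a symmetric algebra, extending the action from the Cartan generating series $H_k(w)$ to all of $\SH^0$ requires that the operators associated to different generators commute. This I plan to verify by iterating the commutation relation to compute
\[
H_k(w)H_l(w')\,g\,H_l(w')^{-1}H_k(w)^{-1}=g\,\widehat{\Phi}_l(w',v)\,\widehat{\Phi}_k(w,v),
\]
which coincides with the expression obtained by swapping $k$ and $l$, because the two rational functions multiply commutatively.

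For \emph{(2)}, I would verify directly that conjugation by $H_k(w)$ intertwines the shuffle product, namely
\[
H_k(w)\,(g_1\star g_2)\,H_k(w)^{-1}=(H_k(w)\,g_1\,H_k(w)^{-1})\star(H_k(w)\,g_2\,H_k(w)^{-1})
\]
for $g_i\in\SH_{v_i}$. By the defining formula the left-hand side equals $(g_1\star g_2)\,\widehat{\Phi}_k(w,v_1+v_2)$, while expanding the right-hand side via \eqref{shuffle formula} yields a sum indexed by partitions $(A,B)\in\bfP(v_1,v_2)$. The identity then reduces term by term to the key multiplicativity
\[
\widehat{\Phi}_k(w,v_1+v_2)(x_{A\cup B})=\widehat{\Phi}_k(w,v_1)(x_A)\cdot\widehat{\Phi}_k(w,v_2)(x_B).
\]
This in turn will follow from the analogous splittings
\[
\fac(z_{e_k}|x_{A\cup B})=\fac(z_{e_k}|x_A)\cdot\fac(z_{e_k}|x_B),\qquad\fac(x_{A\cup B}|z_{e_k})=\fac(x_A|z_{e_k})\cdot\fac(x_B|z_{e_k}),
\]
which are immediate from the product-over-pairs structure of the formulas defining $\fac_1$ and $\fac_2$.

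The main technical obstacle is the careful setup of the completion framework in \emph{(1)} so that $\widehat{\Phi}_k(w,v)$ defines a bona fide operator on $\SH_v$; once this is in place, the remaining content of both parts reduces to direct bookkeeping with the explicit product formulas for $\fac$.
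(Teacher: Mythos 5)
The paper does not actually prove this lemma; it is quoted from \cite[Lemma 5.8]{YZ1}, so there is no internal proof to compare against. Judged on its own terms, your proposal gets the algebraic core right. For part (2), the reduction of
$(g_1\star g_2)\,\widehat{\Phi}_k(w,v_1+v_2)=(g_1\widehat{\Phi}_k(w,v_1))\star(g_2\widehat{\Phi}_k(w,v_2))$
to the term-by-term identity $\widehat{\Phi}_k(w,v_1+v_2)(x_{A\cup B})=\widehat{\Phi}_k(w,v_1)(x_A)\,\widehat{\Phi}_k(w,v_2)(x_B)$ is valid (note you should say explicitly that $\widehat{\Phi}_k(w,v)$ is $\fS_v$-invariant, so it passes inside the symmetrization $\sum_{\sigma\in\Sh(v_1,v_2)}\sigma(\cdot)$), and the multiplicativity of $\widehat{\Phi}_k$ does follow from the ``bilinearity'' $\fac(z_{e_k}|x_{A\sqcup B})=\fac(z_{e_k}|x_A)\fac(z_{e_k}|x_B)$, which is immediate from the product-over-pairs form of $\fac_1,\fac_2$ --- the same identity the paper uses in the proof of coassociativity. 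Likewise, commutativity of the operators for different $k,l$ is clear since they are multiplications by rational functions.

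The genuine gap is in your treatment of the completion in part (1), which you yourself flag as the main obstacle but then resolve incorrectly. You propose to expand $\widehat{\Phi}_k(w,v)$ in powers of $\fl(w)$ around $\fl(w)=0$ and observe that each denominator has constant term $\fl(-x^{(i)}_t\pm c)$, concluding that the coefficients lie in ``a localization of $\mathbf{S}^{(v)}$.'' But if the coefficients genuinely require inverting elements such as $\fl(x^{(i)}_t\mp t_a)$, then multiplication by them does \emph{not} preserve $\SH_v=\mathbf{S}^{(v)}$, and the semidirect product $\SH^0\ltimes\SH$ would fail to close on $\SH^0\otimes\SH$ --- i.e., the action would not be well-defined on $\SH$, only on a localization of it. The point you are missing is that $\widehat{\Phi}_k(w,v)$ is a ratio of two products with the \emph{same} number of $\fl$-linear factors in $w$, so the correct expansion is the one consistent with Example~\ref{ex:completion} (expansion at the ``infinity'' of the $\SH^0$-side variable): for $\bbG=\bbG_a$ this means expanding in $w^{-1}$, giving $\widehat{\Phi}_k(w,v)=1+O(w^{-1})$ with coefficients that are honest polynomials in the $x^{(i)}_t$ and $t_1,t_2$, matching the shape $H_k(w)=1+\sum_n\fl(w)^n\fl(h^{(k)})^{-n-1}$ of the generating series. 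Without identifying the correct completion, part (1) is not established; with it, the rest of your argument goes through.
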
 
\begin{definition}
Define the extended shuffle algebra associated to $\bbG$ and $Q$ to be the algebra $\SH^{\ext}=\SH^{0}\ltimes  \SH^{\bbG}_{Q}$. 
The spherical subalgebra in the extended shuffle algebra is 
$\SH^{\sph,\ext}=\SH^{0} \ltimes  \SH^{\bbG, \sph}_{Q}$.
\end{definition}
More concretely, $\SH^{0} \ltimes  \SH^{\bbG, \sph}_{Q}\cong \SH^{0} \otimes  \SH^{\bbG, \sph}_{Q}$ as vector spaces;
 the multiplication of $H_1\otimes g_1$ and $H_2\otimes g_2$ for $H_i\otimes g_i\in \SH^{0}\ltimes  \SH^{\bbG}_{Q}$ $i=1, 2$
 is given by $(H_1\otimes g_1)\star (H_2\otimes g_2)=H_1H_2\otimes((H_2\cdot g_1)\star g_2)$, where $H_2\cdot g_1$ is the action of $H_2$ on $g_1$ via conjugation \eqref{eqn:conj}. Extending by linearity, we get the multiplication on the entire $\SH^{0} \ltimes  \SH^{\bbG, \sph}_{Q}$.

\subsection{Geometric interpretation}
\label{subsec:relation with P}
The shuffle algebra $\calS\calH_{Q}^{\bbG}$ defined above has a geometric interpretation which we now recall. This geometric interpretation is not used in the later part of this paper and only serves as a motivation, hence can be skipped for readers only interested in the algebraic aspects of this paper. 

We consider algebraic equivariant oriented cohomology theory $A$ in the sense of \cite[\S~2]{CZZ3}, \cite[\S~5.1]{ZZ14}, and  \cite[\S~1.2]{YZ1}, examples of which include \cite{HM, Kr, To, Th}. That is, an assignment of a commutative ring $A_G(X)$ to any smooth quasi-projective variety $X$ with a reductive group $G$-action. In particular, when $G$ is the trivial group, this yields an  algebraic oriented cohomology theory in the sense of \cite[Chapter~2]{LM}.

In this section, 
assume $A$ is an equivariant oriented cohomology theory such that  $A_{\Gm}(\pt)\cong \mathbf{S}$, and that the formal group law of $\bbG$ coincides with the formal group law associated to the
cohomology theory $A$.

 Fix $V=\{V^i\}_{i\in I}$ an $I$-tuple of $k$-vector spaces with dimension vector $\dim(V^i)=v^i$. 
Let $\Rep(Q, v):=\bigoplus_{h\in H}\Hom(V^{\out(h)},V^{\inc(h)})$ be the representation space of $Q$ with dimension vector $v$ and let $T^*\Rep(Q, v)=\Rep(Q, v)\times \Rep(Q^{\op}, v)$ be the cotangent bundle of $\Rep(Q, v)$. The algebraic group $G_{v}:=\prod_{i\in I}\GL(v^i)$ acts on $\Rep(Q,v)$ and $T^*\Rep(Q,v)$ by conjugation.  There is a $T=\G_m^2$-action on $T^*\Rep(Q, v)$ determined by the function $m: H \coprod H^{\op}\to \bbZ$ as follows. For each ordered pair $(i, j)\in H^2$, let $a$ be the number of arrows in $Q$ from vertex $i$ to $j$. Fix an ordering on these arrows from $i$ to $j$, and label them as $h_1, \dots, h_a$. The corresponding reversed arrows in $Q^{\op}$ are labelled by $h_1^*, \cdots, h_a^*$. 
For each such $h_p \in H$, and any $B=(B_p) \in \Hom(V^i, V^j)$, $B^*=(B^*_p) \in \Hom(V^j, V^i)$, 
define the $T=\G_m^2$-action by:
 \[
 t_1\cdot B_p:=t_1^{m_{h_p}} B_p, \,\  t_2\cdot B_p^*:=t_2^{m_{h_p^*}} B_p^*.
 \] 
As abelian groups, we have 
\[
\calS\calH^{\bbG}_{Q}= \bigoplus_{v\in \N^I} A_{G_v\times (\Gm)^2} (T^*\Rep(Q, v)). 
\]
There is a geometrically defined multiplication $m^S_{v_1,v_2}$ on $A_{G_v\times T} (T^*\Rep(Q, v))$, constructed in \cite[\S 4]{YZ1},  that coincides with the  algebraically defined multiplication \eqref{shuffle formula}.
 \Omit{ 
Fix two dimension vectors  $v_1, v_2$ with $v_1+v_2=v$. Fix $V_1\subset V$ an $I$-tuple vector spaces with dimension vector $\dim(V_1)=v_1$. We write $G:=G_{v}$, and $P \subset G_v$, the parabolic subgroup preserving the subspace $V_1$.

Let $Y$ be $\Rep(Q,v_1)\times \Rep(Q,v_2)$ and 
$Z:=G\times_P\{ (x, x^*)\in T^*\Rep(Q, v_1+v_2)\mid (x, x^*)(V_1)\subset V_1\}$. 
In \cite{YZ1}, we have the following Lagrangian correspondence of $G\times T$-varieties:
\begin{equation}\label{corr}
\xymatrix{
G\times_PT^*Y\ar@{^{(}->}[r]^{\iota}&T^*(G\times_PY)&Z \ar[l]_(0.3){\phi} \ar[r]^(0.3){\psi} & T^*\Rep(Q, v_1+v_2), 
}
\end{equation}
where $T^*(G\times_PY)$ can be identified as  the set 
\[
G\times_P\{
(c, x, x^*)\mid c \in \fp_v , x \in \Rep( Q, v_1)\times \Rep( Q, v_2),
x^*\in \Rep( Q^{\op}, v_1)\times \Rep( Q^{\op}, v_2),  [x, x^*]=\pr(c)
\}\]
where $\pr(c)$ is the projection of $c$ in $\fg_{v_1}\oplus \fg_{v_2}$. 
The maps in \eqref{corr} are given by
\begin{align*}
&\phi: \big((g, x, x^*) \mod P\big) \mapsto \big(g, [x, x^*], \pr(x), \pr(x^*)\big) \mod P, \\
&\psi: \big((g, x, x^*, i, j) \mod P\big)\mapsto \big( gxg^{-1}, gx^*g^{-1}\big). 
\end{align*}
The multiplication map $m_{v_1, v_2}^{S}$ of $\bigoplus_{v\in \N^I} A_{G_v\times T} (T^*\Rep(Q, v))$ is defined as the composition $\psi_*\circ \phi^*\circ \iota_*$. }

\begin{remark}
If $R$ contains $\bbQ$, then any formal group $\bbG$ is isomorphic to the additive formal group $\bbG_a$. However, this does not imply that $(\SH^{\bbG}, m^{\bbG}) \cong (\SH^{\bbG_a}, m^{\bbG_a})$, where $m^{\bbG}$ is the multiplication of  $\SH^{\bbG}$. Instead, there is an $R$-module isomorphism $\ch:\SH^{\bbG}\to \SH^{\bbG_a}$, which has the following property. For any $v_1,v_2\in\bbN^I$, there exists an element $\Td_{v_1,v_2}\in \SH^{\bbG_a}_{v_1}\otimes\SH^{\bbG_a}_{v_2}$, such that for any $f\otimes g\in \SH^{\bbG_a}_{v_1}\otimes\SH^{\bbG_a}_{v_2}$, we have 
\[
m^{\bbG_a}(\ch(f\otimes g)\cdot \Td_{v_1,v_2})=\ch(m^{\bbG}(f\otimes g)).\] The element $\Td_{v_1,v_2}$ can be chosen as $\Td_{v_1,v_2}=\frac{\fac^{\bbG}_{v_1,v_2}}{\fac^{\bbG_a}_{v_1,v_2}}$.
\end{remark}

The following is the precise relation between the shuffle algebra and the CoHA, which can be found in \cite[Theorem~C]{YZ1}. 
\begin{prop}\label{prop:Shuf_CoHA}
There is an algebra homomorphism from the extended CoHA $\calP^{\ext}(Q,A)$ to the extended shuffle algebra $\SH^{\ext}$, which induces an isomorphism 
\[\underline\calP^{\sph,\ext}(Q,A)\cong \SH^{\sph,\ext}\]
where $\underline\calP^{\sph,\ext}(Q,A)$ is the quotient of $\calP^{\sph,\ext}(Q,A)$ by the torsion part over $R_{t_1,t_2}$.
\end{prop}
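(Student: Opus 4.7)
The plan is to invoke the construction from \cite{YZ1} but with enough detail to make the algebraic passage transparent. The first step is to write down the homomorphism $\calP^{\ext}(Q,A)\to \SH^{\ext}$ explicitly. Since $T^*\Rep(Q,v)$ is a $G_v\times T$-equivariant vector space, equivariant homotopy invariance identifies $A_{G_v\times T}(T^*\Rep(Q,v))$ with $A_{G_v\times T}(\pt)$, and the splitting principle together with the standing assumption $A_{\Gm}(\pt)\cong \mathbf{S}$ identifies the latter with $\mathbf{S}^{(v)}=\SH_v$. Summing over $v$ yields the underlying $R$-module map, and extending by the semidirect factor $\SH^0$ produces the map from the extended CoHA.

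Next I would prove that this map is an algebra homomorphism by running through the Lagrangian correspondence \eqref{corr} reviewed in \S\ref{subsec:relation with P}. The composition $\psi_*\circ\phi^*\circ\iota_*$ factors through an induction/restriction on $G_{v_1}\times G_{v_2}\subset P\subset G_v$, so in $A$-cohomology it becomes a symmetrization over $\Sh(v_1,v_2)$ of the pullback of $f_1\otimes f_2$, weighted by Euler classes. A direct bookkeeping of the normal bundles to $\iota$ and $\phi$ shows that this Euler class contribution is exactly $\fac_1(x_A|x_B)\cdot \fac_2(x_A|x_B)$: the $\fac_1$ factor comes from the cotangent directions of $G/P$ (the ratio $\fl(x_s-x_t+t_1+t_2)/\fl(x_t-x_s)$ reflects the preprojective relation and the two torus weights), while $\fac_2$ comes from the arrows and co-arrows in $T^*\Rep(Q,v)$ that mix $V_1$ and $V_2$. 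The sign $(-1)^{(v_2,\overline C v_1)}$ arises from the chosen orientation on the Euler–Ringel form, matching the convention in \eqref{shuffle formula}. The same argument, applied to the diagonal Hecke-type correspondence, shows that the action of the Cartan generators of $\calP^{\ext}$ on $\SH$ coincides with the $\SH^0$-action given by \eqref{eqn:conj}, so that the extended algebra map is compatible with the semidirect structure.

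The third step is to identify the images. By construction, the spherical CoHA $\calP^{\sph,\ext}(Q,A)$ is generated by the classes of $A_{G_{e_k}\times T}(T^*\Rep(Q,e_k))$, which under the map above land in $\SH_{e_k}\subset \SH^{\bbG,\sph}_Q$; hence the image lies in $\SH^{\sph,\ext}$, and by definition of the spherical subalgebra the image is all of $\SH^{\sph,\ext}$. The remaining, and main, obstacle is injectivity on the spherical part \emph{after} killing $R_{t_1,t_2}$-torsion. For this I would argue that the kernel of the map $\calP^{\sph,\ext}\to \SH^{\sph,\ext}$ is supported on the locus where the equivariant localization fails, i.e. it is annihilated by a product of the type $\prod \fl(t_1^{m_h}t_2^{m_{h^*}}\cdot(\text{characters}))$; this product is a non-zerodivisor in the generic fibre over $R_{t_1,t_2}$, so the kernel is $R_{t_1,t_2}$-torsion. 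Conversely, the shuffle algebra is torsion-free over $R_{t_1,t_2}$ since $\SH_v=\mathbf{S}^{(v)}$ is, so the induced map from $\underline\calP^{\sph,\ext}$ is injective. Combined with surjectivity, this gives the isomorphism.

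The genuinely hard part is the last step: controlling the torsion in $\calP^{\sph,\ext}$, which rests on a localization/concentration argument showing that a class pushed forward to $T^*\Rep(Q,v)$ is determined, up to $R_{t_1,t_2}$-torsion, by its restriction to the torus-fixed locus. In \cite{YZ1} this is handled by a careful analysis of the fixed points on Hecke correspondences; reproducing it here would require an extended digression, which is why the statement is simply cited as \cite[Theorem~C]{YZ1}.
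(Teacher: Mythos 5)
The paper does not actually prove this proposition: it is quoted verbatim from \cite[Theorem~C]{YZ1}, so there is no internal argument to compare yours against. Your sketch is a plausible outline of how that reference proceeds (Euler-class bookkeeping on the Lagrangian correspondence producing $\fac_1\cdot\fac_2$, torsion-freeness of $\mathbf{S}^{(v)}$ on the target side, and a localization argument bounding the kernel), and, like the paper, you ultimately defer the genuinely hard step --- that the kernel on the spherical part is exactly the $R_{t_1,t_2}$-torsion --- back to \cite{YZ1}. To that extent your proposal establishes no more than the paper's citation does.

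There is, however, one concrete error in your first step, and it matters. The preprojective CoHA $\calP(Q,A)_v$ is the $A$-theoretic Borel--Moore homology of the representation variety of the \emph{preprojective algebra}, i.e.\ of the moment-map fibre $\mu^{-1}(0)\subset T^*\Rep(Q,v)$, not of $T^*\Rep(Q,v)$ itself. Homotopy invariance identifies $A_{G_v\times T}(T^*\Rep(Q,v))$ with $\mathbf{S}^{(v)}=\SH_v$ --- that is the \emph{target} of the homomorphism --- but it does not identify $\calP_v$ with anything; the homomorphism $\calP_v\to\SH_v$ is the proper pushforward along the closed embedding $\mu^{-1}(0)\hookrightarrow T^*\Rep(Q,v)$, composed with that identification. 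If $\calP_v$ were literally $A_{G_v\times T}(T^*\Rep(Q,v))$, the map would be an isomorphism degree by degree and the proposition --- which restricts to the spherical part and quotients by torsion precisely because this pushforward is in general neither injective nor surjective --- would be vacuous. You clearly have the correct picture in mind later on (``a class pushed forward to $T^*\Rep(Q,v)$''), but the construction of the map is the one step of this statement that can be carried out entirely within the paper's own framework, so it needs to be stated correctly; as written, your step one never produces a map out of $\calP^{\ext}(Q,A)$ at all.
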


\begin{remark}\label{rmk:degeneration}
Assume $Q$ is a quiver without edge-loops. 
When $\bbG$ is the additive group, it is shown in  \cite[Theorem~D]{YZ1} that $\SH^{\sph,\ext}|_{t_1=t_2=0}$  is a central extension of $U(\fb[\![u]\!])$, where $\fb\subset \fg$ is the Borel subalgebra. This central extension is trivial when $Q$ is of finite type, and is described in detail in \cite[\S~8.3.4]{YZ1} when $Q$ is of affine type. 

For general $\bbG$, for simplicity we take the formal completion of $\bbG$ in the case when $\bbG$ is an algebraic group. Assume $\bbG$ comes from an oriented cohomology theory. Then there is a natural cohomological grading on the base ring $R$ so that the natural classifying map from the Lazard ring to $R$ is homogeneous of degree zero. Let $\overline R$ be the image of the Lazard ring in $R$, which is non-negatively graded. There is an augmentation map to the zeroth degree piece $\overline R\to R^0$.  For example, when $\bbG=\bbG_m$ is multiplicative, the cohomology theory is the $K$-theory, in which case $R=\bbZ[\beta^\pm]$ where $\beta$ is the Bott periodicity; The image of the Lazard ring is $\bbZ[\beta^{-1}]$, which has an augmentation map $\bbZ[\beta^{-1}]\to \bbZ$ sending $\beta^{-1}$ to 0. On the quotient, the image of the formal group law is additive. Therefore, on this special fiber of $\Spec \overline{R}$, the algebra $\SH^{\sph,\ext}|_{t_1=t_2=0}$ becomes a central extension of $U(\fb[\![u]\!])$ as an algebra.  In this sense, we say $\SH^{\sph,\ext}$ is a quantization of $U(\fb[\![u]\!])$.
\end{remark}

\section{The comultiplication}\label{sec:coprod}
In this section, we construct the map $\Delta$ on a suitable localization of the extended shuffle algebra 
\[
\Delta: \SH_{v}^{\ext}\to \sum_{v_1+v_2=v} (\SH_{v_1}^{\ext} \otimes \SH_{v_2}^{\ext})_{\loc}. 
\]
Here  $\loc$ means the localization away from   the union of null-divisors of $\fac(x_A,x_B)$ over all $(A,B)\vdash v$. The ideal of this divisor in $\SH_v$ is denoted by  $I_{\fac}$. 

\subsection{Definition of the comultiplication}
On the symmetric algebra $\SH^0$, the map $\Delta$ is determined by
\begin{align}\label{coprodH}
&
\Delta(H_k(w))=H_k(w)\otimes H_k(w), k\in I.
\end{align}
This is understood in the following way. Recall that $H_k(w)=1+\frac{1}{\fl(h^{(k)})-\fl(w)}$ is a rational function on $\bbG^{(e_k)}\times \bbG\subseteq \fh^\bbG\times\bbG$. The function $H_k(w)\otimes H_k(w)$ is the rational function on $\bbG^{(e_k)}\times\bbG^{(e_k)}\times\bbG\subseteq \fh^\bbG\times\fh^\bbG\times\bbG$ defined as the product $p_1^*( H_k(w)) \cdot p_2^*( H_k(w))$, 
where $p_i: \bbG^{(e_k)}\times\bbG^{(e_k)}\times\bbG\to \bbG^{(e_k)}\times\bbG$ is the natural projection contracting the $i$-th $\bbG^{(e_k)}$-factor, for $i=1, 2$ and $p_i^*$ is the pullback along the projection $p_i$.  Here $w$ is the coordinate of the last $\bbG$-factor.  Expanding $H_k(w)\otimes H_k(w)$  with respect to $\fl(w)$, we get a well-defined element in a completion of $\SH^0\otimes\SH^0[\![w]\!]$.

We now define  $\Delta$ on $\SH$ by the following formula for an homogeneous element $P(x)\in \SH_v$; extending this formula by linearity gives a definition of $\Delta$ on the entire $\SH$.
\begin{equation}\label{eq:coprod}
\Delta (P(x))
=\sum_{\{v_1+v_2=v\}}
(-1)^{(v_2, \overline{C} v_1)}
\frac{ H_{[1,v_1]}(x_{[v_1+1,v]}) P(x_{[1,v_1]}\otimes x_{[v_1+1,v]})}{\fac(x_{[v_1+1,v]}| x_{[1,v_1]})},
\end{equation}
which is a rational function on $(\fh^{\bbG}\times\bbG^{(v_1)})\times(\bbG^{(v_2)}).$
Here
$ H_A(x_B):=\prod_{k\in I} \prod_{ j\in B^{k}} 
H_k (x_j^{(k)})$. The function $H_k( x_{j}^{(k)})$ is defined by pulling back 
the function $H_k(w)$ of $\bbG^{(e_k)}\times \bbG$ along the identification $\bbG^{(e_k)}\times \bbG_j\cong \bbG^{(e_k)}\times \bbG$. Note that in the inclusion
$\bbG^{(e_k)}\times \bbG_j\subset (\fh^{\bbG}\times\bbG^{(v_1)})\times(\bbG^{(v_2)})$, the $\bbG^{(e_k)}$-factor lies in the  $\fh^{\bbG}$-factor, and the $\bbG_j$-factor lies in the $\bbG^{(v_2)}$-factor. 
Expanding each $H_k(x_j^{(k)})$ with respect to $\fl(x_j^{(k)})$, we get  $H_{[1,v_1]}(x_{[v_1+1,v]})$ as a well-defined element in the completion 
\begin{equation}\label{eqn:completion}
\sum_{v_1+v_2=v} \SH_{v_1}^{\ext}\widehat{\otimes} \SH_{v_2}^{\ext}.
\end{equation}
Here the completion of $\SH_{v_1}^{\ext}\otimes \SH_{v_2}^{\ext}$ is taken with respect to the  powers of $\frac{\fl(h^{(k)})}{\fl(x^{(s)})}$ where $k\in [1,v_1]$ and $s\in [v_1+1,v]$, as illustrated in the following.
\begin{example}\label{ex:completion}
When $\bbG=\bbG_a$, $I=\{\pt\}$,  $v_1=v_2=1$, we have $H_1(x_2)=1+\frac{1}{h-x_2}$ as a rational function on $\bbA^2$, where the coordinates of $\bbA^2$ are $(h,x_2)$. Expanding with respect to powers of $x_2$ we get $H_1(x_2)=1+\sum_{i=0}^\infty x_2^ih^{-i-1}$. Note that this is the power series expansion around the neighborhood of $(\infty,0)\in\bbP^1\times\bbP^1$, which does not lie in $\bbA^2\subseteq \bbP^1\times\bbP^1$.
\end{example}

Note that $\SH^{\ext}\otimes \SH^{\ext}$ is an algebra, with the multiplication structure 
$(a_1\otimes b_1)\star (a_2\otimes b_2):=(a_1\star a_2)\otimes (b_1\star b_2)$, for $a_i, b_i\in \SH_{v_1}^{\ext}$, $i=1, 2$.
The comultiplication of a general element $\sum_i (H_i, P_i)\in \SH^{\ext}$ is defined to be
\begin{equation}
\label{coprod and algebra}
\Delta(\sum_i (H_i, P_i))
=\sum_i \Delta(H_i)\star \Delta(P_i), 
\end{equation}
where $\Delta(H_i), \Delta(P_i)$ are determined by \eqref{coprodH} and  \eqref{eq:coprod}.

In particular,
 when $v=e_k$ for some $k\in I$, either $A$ or $B$ is empty, and  $P(x_{A\cup B})=f(x_k)$ a function with only one variable $x^{(k)}$. 
We have
\begin{equation}\label{eq:delta f}
\Delta(f(x^{(k)}))=H_k(x^{(k)}) \otimes  f(x^{(k)})+f(x^{(k)})\otimes 1.
\end{equation}

\begin{theorem}\label{thm:coprod}The map $\Delta$ from $\SH^{\ext}$ to a localization of $\SH^{\ext}\otimes \SH^{\ext}$ defined above satisfies the following:
\begin{romenum}
\item\label{item1}
$\Delta(P\star Q)=\Delta(P)\star\Delta(Q)$ for any $P,Q\in \SH^{\ext}$;
\item \label{item2}
$(\mathrm{id}_{\SH^{\ext}} \otimes \Delta) \circ \Delta = (\Delta \otimes \mathrm{id}_{\SH^{\ext}}) \circ \Delta$;
\item  \label{item3}
Let $\epsilon: \SH^{\ext}\to R$ be the augmentation map, i.e.,  $\epsilon:\SH^0\to R$ is the augmentation map of the symmetric algebra, $\epsilon=\id: \SH_{0} =R\to R$,  and $\SH_{v}\mapsto 0$ for $v\neq 0$. 
Then,  $
(\mathrm{id}_{\SH^{\ext}} \otimes \epsilon) \circ \Delta = \mathrm{id}_{\SH^{\ext}} = (\epsilon \otimes \mathrm{id}_{\SH^{\ext}}) \circ \Delta.$
\end{romenum}
\end{theorem}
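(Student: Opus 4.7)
The plan is to prove the three parts of Theorem~\ref{thm:coprod} in increasing order of technical difficulty, with part~\eqref{item1} being the main calculation. Part~\eqref{item3} (counit) is the quickest: for $P\in\SH_v$ with $v\neq 0$, apply $\id\otimes\epsilon$ to the expansion \eqref{eq:coprod} of $\Delta(P)$; only the term with $v_2=0$ survives, since in it $H_{[1,v]}(x_{\emptyset})=1$ and $\fac(x_\emptyset|x_{[1,v]})=1$ (both empty products), leaving $P\otimes 1$ and hence producing $P$ after applying $\epsilon$ on the right. A symmetric argument, together with the normalization $\epsilon(H_k(w))=1$ dictated by the expansion of $H_k(w)$ around the augmentation point of $\SH^0$, yields $(\epsilon\otimes\id)\Delta(P)=P$. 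On the $\SH^0$-part, \eqref{coprodH} satisfies the counit axiom immediately.

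For part~\eqref{item1}, first reduce to bihomogeneous $P\in \SH^{\ext}_{u_1}$, $Q\in\SH^{\ext}_{u_2}$, and handle the $\SH^0$-parts separately. When both arguments lie in $\SH^0$, multiplicativity of $\Delta$ follows from \eqref{coprodH} and commutativity of $\SH^0$; when one lies in $\SH^0$ and the other in $\SH$, \eqref{eqn:conj} and \eqref{eq:delta f} force the identity. The main case is $P\in\SH_{u_1}$, $Q\in\SH_{u_2}$, both in $\SH$. I would expand the $(v_1,v_2)$-graded component of each side: on the LHS, substitute \eqref{shuffle formula} for $P\star Q$ into \eqref{eq:coprod}, producing a sum over partitions $(A,B)\vdash v$ with $|A|=u_1$, $|B|=u_2$; on the RHS, expand $\Delta(P)\star\Delta(Q)$ as a sum over refined decompositions $u_1=u_1'+u_1''$, $u_2=u_2'+u_2''$ with $u_1'+u_2'=v_1$, $u_1''+u_2''=v_2$, and over shuffles on each tensor factor, using \eqref{eqn:conj} to commute the $H$-functions of $\Delta(P)$ past the $\SH$-part of $\Delta(Q)$. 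Terms match under the bijection assigning to each $(A,B)$ a quadruple $(A_1,A_2,B_1,B_2)$ with $A=A_1\sqcup A_2$, $B=B_1\sqcup B_2$, and $A_i\sqcup B_i$ the index set of the $i$-th tensor factor. The key identity is that $\fac(x_A|x_B)$ factors as $\prod_{i,j}\fac(x_{A_i}|x_{B_j})$; two of the four blocks match the shuffle-product $\fac$-factors inside $\SH_{v_1}$ and $\SH_{v_2}$, while the cross blocks $\fac(x_{A_1}|x_{B_2})\fac(x_{A_2}|x_{B_1})$ arise from the $\widehat\Phi$-ratios produced by \eqref{eqn:conj} combined with the $1/\fac$-factors of \eqref{eq:coprod}. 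The prefactor signs combine correctly by bilinearity of $(\cdot,\overline{C}\,\cdot)$.

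Part~\eqref{item2} (coassociativity) follows the same template. Iterating \eqref{eq:coprod} produces on each side a triple sum over decompositions $v=v_1+v_2+v_3$, and the expressions coincide term by term: the multiplicativity of $\fac$ under disjoint union of index sets distributes the $1/\fac$-factors symmetrically, $\Delta(H_k)=H_k\otimes H_k$ from \eqref{coprodH} spreads the $H$-functions across the three tensor slots, and the signs assemble via bilinearity of $(\cdot,\overline{C}\,\cdot)$.

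The main obstacle is the combinatorial bookkeeping in the principal case of~\eqref{item1}: one must track simultaneously the $\widehat\Phi$-factors produced by repeated application of \eqref{eqn:conj}, verify that they combine with the existing $1/\fac$-factors to yield exactly the cross-block factors $\fac(x_{A_1}|x_{B_2})\fac(x_{A_2}|x_{B_1})$, and check that every prefactor sign lines up. A further subtlety is that $\Delta(P)\star\Delta(Q)$ is a priori defined only after expansion in the completion around \eqref{eqn:completion} (cf.\ Example~\ref{ex:completion}); I would therefore first establish the identity as rational functions on $(\fh^{\bbG}\times\bbG^{(v_1)})\times(\fh^{\bbG}\times\bbG^{(v_2)})$, where both sides are manifestly well-defined, and only then deduce agreement of the completed power-series expansions.
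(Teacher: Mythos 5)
Your proposal is correct and follows essentially the same route as the paper: reduction to homogeneous elements with the $\SH^0$-parts handled via the conjugation action \eqref{eqn:conj}, the bijection $(A,B)\leftrightarrow(A_1,A_2,B_1,B_2)$ between partition data, the block-multiplicativity of $\fac$ combined with the $\widehat\Phi$-ratios as the key cancellation for \eqref{item1}, the cocycle identity $\fac(z_{B\cup C}|z_A)\fac(z_C|z_B)=\fac(z_C|z_{A\cup B})\fac(z_B|z_A)$ for \eqref{item2}, and $\epsilon(H_k(w))=1$ with the surviving $v_2=0$ term for \eqref{item3}. The only difference is cosmetic: the paper simply observes that the sign $(-1)^{(v_2,\overline{C}v_1)}$ occurs identically in $\star$ and $\Delta$ and drops it, rather than tracking it by bilinearity.
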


Note that by formula~\eqref{eq:delta f} of $\Delta(f)$  for $f\in \SH_{e_k}$,  $\Delta(\SH_{e_k}) $ lies in $\SH^{\ext}\otimes \SH^{\ext}$ localized at poles of $H_{[1,v_1]}(x_{[v_1+1,v]})$.  Expanding with respect to powers of $\fl(x_{[v_1+1,v]})$, we get a well-defined element in $\SH^{\ext}\widehat{\otimes} \SH^{\ext}$, where the completion is in the same sense as \eqref{eqn:completion}. Consequently, by Theorem~\ref{thm:coprod}, we have \[
\Delta(\SH^{\sph}_v)\subset 
\sum_{v_1+v_2=v} \SH_{v_1}^{\ext}\widehat{\otimes} \SH_{v_2}^{\ext}.\]
Therefore, we get the following direct corollary to Theorem~\ref{thm:coprod}.
\begin{corollary}
The map $\Delta: \SH^{\sph,\ext}\to \SH^{\sph,\ext}\widehat{\otimes}\SH^{\sph,\ext}$ is a well-defined algebra homomorphism which is coassociative, making $\SH^{\sph,\ext}$ a bialgebra with counit $\SH^{\sph,\ext}\to R$ being the restriction of $\epsilon:\SH^{\ext}\to R$. 
\end{corollary}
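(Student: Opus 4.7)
The plan is to deduce the corollary as a direct restriction of Theorem~\ref{thm:coprod} to the spherical subalgebra. Since items (i)--(iii) of Theorem~\ref{thm:coprod} hold on the ambient $\SH^{\ext}$, the only substantive task is to verify that $\Delta$ actually sends $\SH^{\sph,\ext}$ into the completed tensor product $\SH^{\sph,\ext}\widehat{\otimes}\SH^{\sph,\ext}$. Once that containment is established, multiplicativity, coassociativity, and the counit identities restrict verbatim, and the bialgebra structure on $\SH^{\sph,\ext}$ is immediate.

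To verify the containment, I would use that $\SH^{\sph,\ext}=\SH^0\ltimes\SH^{\bbG,\sph}_Q$ is generated as an algebra by $\SH^0$ together with the degree-$e_k$ pieces $\SH_{e_k}$ for $k\in I$. By Theorem~\ref{thm:coprod}(i), $\Delta$ is multiplicative, so it suffices to verify the image of $\Delta$ on these generators. Formula~\eqref{coprodH} gives
\[\Delta(H_k(w))=H_k(w)\otimes H_k(w)\in\SH^0\otimes\SH^0\subset\SH^{\sph,\ext}\otimes\SH^{\sph,\ext},\]
while formula~\eqref{eq:delta f} gives, for $f\in\SH_{e_k}$,
\[\Delta(f(x^{(k)}))=H_k(x^{(k)})\otimes f(x^{(k)})+f(x^{(k)})\otimes 1.\]
The first summand, after being expanded in powers of $\fl(x^{(k)})$ in the sense of Example~\ref{ex:completion}, lies in the completion $\SH^0\widehat{\otimes}\SH_{e_k}\subset\SH^{\sph,\ext}\widehat{\otimes}\SH^{\sph,\ext}$, and the second already lies in $\SH_{e_k}\otimes R\subset\SH^{\sph,\ext}\otimes\SH^{\sph,\ext}$. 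So each generator has its $\Delta$-image in the spherical completed tensor.

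To propagate the containment from generators to all of $\SH^{\sph,\ext}$ via the multiplicativity of $\Delta$, I would verify that the shuffle multiplication on $\SH^{\ext}\otimes\SH^{\ext}$ extends continuously to the completion described after~\eqref{eqn:completion}. The completion is filtered by the $\bbN^I$-grading, and the shuffle product respects this grading; thus each fixed $\bbN^I$-degree piece of a product of two expansions receives contributions from only finitely many pairs of terms, so products of elements of $\SH^{\sph,\ext}\widehat{\otimes}\SH^{\sph,\ext}$ remain in $\SH^{\sph,\ext}\widehat{\otimes}\SH^{\sph,\ext}$. This topological compatibility between the $\widehat{\otimes}$ completion and the shuffle multiplication is the main technical point, and is the step I expect to require the most care, as one must track the expansion in powers of $\fl(h^{(k)})/\fl(x^{(s)})$ through the symmetrization in the shuffle product. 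Once it is established, the coassociativity, algebra-homomorphism property, and counit compatibility are formal consequences of the corresponding statements in Theorem~\ref{thm:coprod}, completing the proof.
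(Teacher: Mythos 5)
Your proposal is correct and follows essentially the same route as the paper: the paper likewise notes that by formula~\eqref{eq:delta f} the image of each generator $f\in\SH_{e_k}$, after expansion in powers of $\fl(x^{(k)})$, is a well-defined element of the completion, and then invokes the multiplicativity from Theorem~\ref{thm:coprod} to conclude $\Delta(\SH^{\sph}_v)\subset\sum_{v_1+v_2=v}\SH^{\ext}_{v_1}\widehat{\otimes}\SH^{\ext}_{v_2}$. Your extra remark about the shuffle product being compatible with the completion degreewise is a point the paper leaves implicit, but it does not change the argument.
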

Taking Proposition~\ref{prop:Shuf_CoHA} into account, the above corollary implies Theorem~\ref{ThmIntr_A}.

\subsection{Proof of Theorem~\ref{thm:coprod}}
\begin{proof}[Proof of Theorem~\ref{thm:coprod}\eqref{item1}]
We start by proving  \eqref{item1}. 
By \eqref{coprodH}, it is clear that both sides of \eqref{item1} coincide when restricted on $\SH^0$. 

We now prove  \eqref{item1} when restricted to $\SH$.
Without loss of generality, we take $P\in \SH_{v_1}, Q\in \SH_{v_2}$ to be homogenous elements. 
Note that we have the same sign $(-1)^{(v_2, \overline{C} v_1)}$ in both the multiplication  \eqref{shuffle formula} and the comutiplication  \eqref{eq:coprod} of $\SH^{\ext}$. Therefore, to show  \eqref{item1}, we could drop the sign $(-1)^{(v_2, \overline{C} v_1)}$ in both $\star$ and $\Delta$.

We need the following notations.
For a pair of dimension vector $(v_1', v_2')$, with $v_1'+v_2'=v$, for $(A, B)\in\bfP(v_1, v_2)$, 
we write
\[
A_1:=A\cap [1, v_1'], A_2:=A\cap [v_1'+1, v_1'+v_2'], 
B_1:=B\cap [1, v_1'], B_2:=B\cap [v_1'+1, v_1'+v_2']. 
\]
Thus we have
\[
A=A_{1}\sqcup A_{2}, B=B_{1}\sqcup B_{2}, \text{and}\,\ 
A_1\sqcup B_1=[1, v_1'], A_2\sqcup B_2=[v_1'+1, v_1'+v_2']. 
\]
We compute $\Delta(P\star Q)$ using  \eqref{shuffle formula} and \eqref{eq:coprod}. 
\begin{align}
 &\Delta(P\star Q) \notag\\
 %----
 =&
\sum_{(A, B)\in\bfP(v_1,v_2)} 
\Delta(P(z_A)\cdot Q(z_B)\cdot \fac(z_A, z_B)) \notag\\
 %----
=&\sum_{(A, B)\in\bfP(v_1,v_2)} 
\sum_{ v_1'+v_2'=v}
\frac{ H_{A_1 \sqcup B_1}(z_{A_2 \sqcup B_2})
P(z_{A_1} \otimes z_{A_2})
Q(z_{B_1} \otimes z_{B_2})
\fac(z_{A_1} \otimes z_{A_2}|z_{B_1} \otimes z_{B_2})}
{ \fac( z_{[v_1'+1, v_1'+v_2']}|z_{[1, v_1']})}
\notag\\
 %----
=&\sum_{(A, B)\in\bfP(v_1,v_2)} 
\sum_{ v_1'+v_2'=v}
\frac{ H_{A_1}(z_{A_2})H_{B_1}(z_{B_2})
P(z_{A_1} \otimes z_{A_2})
Q(z_{B_1} \otimes z_{B_2})
\fac(z_{A_1\sqcup A_2 }|z_{B_1\sqcup B_2})}
{ \fac( z_{A_2\sqcup B_2}|z_{A_1\sqcup B_1})}
\notag\\
 %----
=&\sum_{ v_1'+v_2'=v}
\sum_{(A, B)\in\bfP(v_1,v_2)}
\frac{ H_{A_1}(z_{A_2})
P(z_{A_1} \otimes z_{A_2})H_{B_1}(z_{B_2})
Q(z_{B_1} \otimes z_{B_2})
}
{ \fac(z_{A_2\sqcup B_2}|z_{A_1\sqcup B_1})}
\widehat{\Phi}(z_{B_2}| z_{A_1}) \fac(z_{A_1\sqcup A_2}|z_{B_1\sqcup B_2}).
\label{eq:in coproduct}
\end{align}
The last equality is obtained from the identity $
H_{B_1}(z_{B_2}) P(z_{A_1})
=P(z_{A_1})H_{B_1}(z_{B_2}) \widehat{\Phi}(z_{B_2}| z_{A_1}).$
Recall that by definition $\widehat{\Phi}(z_{B_2}| z_{A_1})=
\frac{\fac(z_{B_2}|z_{A_1})}{\fac(z_{A_1}|z_{B_2})},$ and hence
\[
\frac{\widehat{\Phi}(z_{B_2}| z_{A_1}) \fac(z_{A_1\sqcup A_2}|z_{B_1\sqcup B_2})}{ \fac(z_{A_2\sqcup B_2}|z_{A_1\sqcup B_1})}
= \frac{
\fac(z_{A_1}|z_{B_1})
\fac(z_{A_2}|z_{B_2})
}{ 
\fac(z_{A_2}|z_{A_1})
\fac(z_{B_2}|z_{B_1}).
}\]
Plugging this  into  \eqref{eq:in coproduct},  we get 
\begin{align}
\label{eq:coproduct2}
\sum_{ v_1'+v_2'=v}
\sum_{(A, B)\in\bfP(v_1, v_2)} 
&\frac{H_{A_1}(z_{A_2})
P(z_{A_1} \otimes z_{A_2})  }{\fac(z_{A_2}|z_{A_1})}
\frac{H_{B_1}(z_{B_2})
Q(z_{B_1} \otimes z_{B_2}) }
{\fac(z_{B_2}|z_{B_1})}
\fac(z_{A_1}|z_{B_1})
\fac(z_{A_2}|z_{B_2}).
\end{align}

We then compute $\Delta(P)\star\Delta(Q)$.
We have
\begin{align}
&\Delta(P(z_{A_o}))\star\Delta(Q(z_{B_o})) \notag\\
=&\Big(\sum_{A_{o}=A_{1o}\sqcup A_{2o}}
\frac{ H_{A_{1o}}(x_{A_{2o}}) P(x_{A_{1o}}\otimes x_{A_{2o}})}{\fac(x_{A_{2o}}| x_{A_{1o}})}\Big)
\star
\Big(\sum_{B_{o}=B_{1o}\sqcup B_{2o}}
\frac{ H_{B_{1o}}(x_{B_{2o}}) Q(x_{B_{1o}}\otimes x_{B_{2o}})}{\fac(x_{B_{2o}}| x_{B_{1o}})}\Big)
\notag\\
=&\sum_{\substack{
A_{1o}\sqcup A_{2o}=A_{o}, \\B_{1o}\sqcup B_{2o}=B_{o}}}
\sum_{\substack{(A_1, B_1)\in \bfP(|A_{1o}|, |B_{1o}| ) \\
(A_2, B_2)\in \bfP(|A_{2o}|,|B_{2o}| )
}}
\frac{H_{A_1}(z_{A_2})
P(z_{A_1} \otimes z_{A_2})  }{\fac(z_{A_2}|z_{A_1})}
\frac{H_{B_1}(z_{B_2})
Q(z_{B_1} \otimes z_{B_2}) }
{\fac(z_{B_2}|z_{B_1})}
\fac(z_{A_1}|z_{B_1})
\fac(z_{A_2}|z_{B_2}), 
\label{eq:coprod3}
\end{align}
where the first equality follows from the coproduct formula \eqref{eq:coprod}, and the second equality follows from \eqref{shuffle formula}. 
Here in the indices of summations we write $A_{1o}\sqcup A_{2o}=A_{o}$ to mean a standard partition of $A_o$, i.e.,
$A_{1o}=[1, w_1]$ and $A_{2o}=[w_1+1, v_1]$ for some $1 \leq w_1 \leq v_1$; Similarly, $B_{1o}\sqcup B_{2o}=B_{o}$ stands for a standard partition of $B_{o}$.
Switching the order of $A_{2o}$ and $B_{1o}$, we  write $A_{1o} \sqcup B_{1o}$ as $[1, v_1']$, and $A_{2o} \sqcup B_{2o}$ as $[v_1'+1, v]$. 
The set \[\{(A_o=A_{1o}\sqcup A_{2o}, B_o=B_{1o}\sqcup B_{2o},(A_1,B_1),(A_2,B_2)) \mid (A_1,B_1)\in\bfP(|A_{1o}|, |B_{1o}| ) , (A_2,B_2)\in  \bfP(|A_{2o}|,|B_{2o}| )
\}\] is in bijection with the set \[\{(v_1', v_2' ,(A,B))\mid v_1'+v_2'=v, (A,B)\in \bfP(v_1,v_2)\},\] where the bijection is given by 
taking $A=A_1\sqcup A_2$ and $B=B_1\sqcup B_2$. Comparing formula \eqref{eq:coprod3} with the formula \eqref{eq:coproduct2}, we obtain 
$\Delta(P\star Q)=\Delta(P)\star\Delta(Q)$. 

For general elements $P,Q\in \SH^{\ext}$,  \eqref{item1} follows from Lemma~\ref{lem:reduce} below. 
\end{proof}
\begin{lemma}\label{lem:reduce}
For any $P\in \SH$, and $H\in \SH^0$, we have 
$\Delta(P\star H)=\Delta(P)\star \Delta(H)$. 
\end{lemma}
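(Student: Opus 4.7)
The plan is to reduce to the generating series $H = H_k(w)$ and then exploit the multiplicativity of $\widehat{\Phi}_k(w,v)$ under splittings of the dimension vector. Since every element of $\SH^0$ is obtained from coefficients of $\fl(w)$-expansions of (products of) the $H_k(w)$'s, and since $\Delta$ is linear, it suffices to verify the identity with $H$ replaced by $H_k(w)$ for $k\in I$.

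By the semidirect product relation in $\SH^{\ext}$, for $P\in \SH_v$ homogeneous one has $P\star H_k(w) = H_k(w)\star(H_k(w)\cdot P)$, where $H_k(w)\cdot P = P\cdot\widehat{\Phi}_k(w,v)$ by \eqref{eqn:conj}. Applying \eqref{coprod and algebra} yields
\[ \Delta(P\star H_k(w)) = \Delta(H_k(w))\star\Delta(H_k(w)\cdot P). \]
Using $\Delta(H_k(w)) = H_k(w)\otimes H_k(w)$ from \eqref{coprodH} and the factorwise multiplication on $\SH^{\ext}\otimes\SH^{\ext}$, together with the identity $X\star H_k(w) = H_k(w)\star(H_k(w)\cdot X)$ for any $X\in \SH^{\ext}$ applied in each tensor factor separately, the equality $\Delta(H_k(w))\star\Delta(H_k(w)\cdot P) = \Delta(P)\star\Delta(H_k(w))$ reduces to
\[ \Delta(H_k(w)\cdot P) = \bigl(H_k(w)\cdot\otimes H_k(w)\cdot\bigr)\Delta(P), \]
i.e.\ $\Delta$ intertwines the $\SH^0$-action with its diagonal action on the tensor square.

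This last identity is verified by direct inspection of \eqref{eq:coprod}. The explicit product formula for $\widehat{\Phi}_k(w,v)$ shows it is a product over the variables $x^{(i)}_s$ with $i\in I$, $s\in[1,v^i]$; thus under any splitting $v = v_1+v_2$ it factors as
\[ \widehat{\Phi}_k(w,v)(x_{[1,v_1]}\otimes x_{[v_1+1,v]}) = \widehat{\Phi}_k(w,v_1)(x_{[1,v_1]})\cdot \widehat{\Phi}_k(w,v_2)(x_{[v_1+1,v]}). \]
Plugging this into \eqref{eq:coprod} applied to $P\cdot\widehat{\Phi}_k(w,v)$ yields $\Delta(P)$ multiplied factorwise by $\widehat{\Phi}_k(w,v_1)$ on the first factor and $\widehat{\Phi}_k(w,v_2)$ on the second, which is exactly the diagonal action of $H_k(w)$ on $\Delta(P)$; the factor $H_{[1,v_1]}(x_{[v_1+1,v]})$ in \eqref{eq:coprod} is unaffected, since $H_k(w)$ acts trivially on the $\SH^0$-part and $w$ is an independent formal variable.

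The main obstacle is primarily bookkeeping: one must carefully track which formal variables ($w$, the $x$-coordinates of $P$, and the auxiliary $h^{(k)}$-coordinates in $\SH^0$) sit in which tensor factor, and verify that the semidirect product commutation relation interacts cleanly with the factorwise multiplication on $\SH^{\ext}\otimes\SH^{\ext}$. Once this is set up, the identity reduces to the multiplicativity of $\widehat{\Phi}_k(w,v)$ in the dimension vector, which is manifest from its explicit formula.
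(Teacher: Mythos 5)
Your proposal is correct and follows essentially the same route as the paper: reduce to $H=H_k(w)$, use the semidirect-product relation $P\star H_k(w)=H_k(w)\star(H_k(w)\cdot P)$ together with \eqref{coprod and algebra}, and conclude via the multiplicativity $\widehat{\Phi}_k(w|z_{A_o})\widehat{\Phi}_k(w|z_{B_o})=\widehat{\Phi}_k(w|z_{A_o\cup B_o})$, which is exactly the statement that $\Delta$ intertwines the conjugation action with its diagonal action on the tensor square. The paper merely runs the same computation in the opposite direction (starting from $\Delta(P)\star\Delta(H)$ and arriving at $\Delta(P\star H)$), so there is no substantive difference.
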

\begin{proof}\Omit{
Suppose the claim is true for $H_1, H_2\in \SH^0$. We now show it is also true for $H_1 H_2$. 
That is, suppose for any $P\in \SH$, one has $\Delta(P\star H_i)=\Delta(P)\star \Delta(H_i)$, for $i=1, 2$. We check that $\Delta(P\star (H_1H_2))=\Delta(P)\star \Delta(H_1H_2)$. Indeed, on one hand, we have
\begin{align*}
&\Delta(P\star (H_1 H_2))
=\Delta((H_1H_2)\star (H_1H_2)(P))
=\Delta(H_1H_2) \star \Delta((H_1H_2)(P)). 
\end{align*}
On the other hand, by the formula \eqref{coprodH}, it is clear that the two sides of \eqref{item1} coincide when restricted on $\SH^0$. Using $\Delta(H_1H_2)=\Delta(H_1)\Delta(H_2)$, and the claim for $H_1, H_2$, we have
\begin{align*}
\Delta(P)\star \Delta(H_1H_2)
=&\Delta(P)\star \Delta(H_1)\star \Delta(H_2)
=\Delta(P\star H_1)\star \Delta(H_2)\\
=&\Delta(H_1 \star H_1(P))\star \Delta(H_2)
=\Delta(H_1) \star \Delta(H_1(P))\star \Delta(H_2)\\
=&\Delta(H_1) \star \Delta(H_1(P)\star H_2)
=\Delta(H_1) \star \Delta(H_2 \star H_2H_1(P))\\
=&\Delta(H_1H_2) \star \Delta((H_1H_2)(P)). 
\end{align*}
Therefore, $\Delta(P\star (H_1 H_2))=\Delta(P)\star \Delta(H_1H_2)$. As a consequence, to show that lemma, }
Without loss of generality, we could choose $H=H_{k}(w)$. The comultiplication $\Delta$ is $R$-linear. It suffices to choose $P$ to be the homogeneous element in $\SH_v$. Now, for any $P\in \SH_{v}$, and $H=H_k(w)$. We compute $\Delta(P)\star \Delta(H)$ as follows. As before, we drop the sign $(-1)^{(v_2, \overline{C} v_1)}$ in the formula of $\Delta(P)$ for the convenience of the notation.  

By \eqref{coprodH} and \eqref{eq:coprod}, we have
\begin{align*}
\Delta(P)\star \Delta(H)
=&\sum_{(A_o, B_o)\vdash v}
\frac{ H_{A_o}(z_{B_o}) P(z_{A_o}\otimes z_{B_o})}{\fac(z_{B_o}| z_{A_o})}\star 
\big(H_k(w)\otimes H_k(w)\big)\\
=&\sum_{(A_o, B_o)\vdash v}\big(H_k(w)\otimes H_k(w)\big)\star
\Big(
\frac{ H_{A_o}(z_{B_o}) P(z_{A_o}\otimes z_{B_o})}{\fac(z_{B_o}| z_{A_o})}
\widehat{\Phi}_k(w| z_{A_0})\widehat{\Phi}_k(w| z_{B_0})\Big)\\
=&\sum_{(A_o, B_o)\vdash v}\big(H_k(w)\otimes H_k(w)\big)\star
\Big(
\frac{ H_{A_o}(z_{B_o}) P(z_{A_o}\otimes z_{B_o})}{\fac(z_{B_o}| z_{A_o})}
\widehat{\Phi}_k(w| z_{A_0\cup B_o})\Big)\\
=&\big(H_k(w)\otimes H_k(w)\big)\star
\sum_{(A_o, B_o)\vdash v}
\Big(
\frac{ H_{A_o}(z_{B_o}) P(z_{A_o}\otimes z_{B_o}) \widehat{\Phi}_k(w| z_{A_o}\otimes z_{B_o})}{\fac(z_{B_o}| z_{A_o})}\Big)
\\
=&
\Delta(H_k(w))\star \Delta(H_k(w)(P))
=\Delta(H \star H(P))
=\Delta(P\star H).
\end{align*}
This completes the proof. 
\end{proof}
To prove Theorem~\ref{thm:coprod}  \eqref{item1} in general, by linearity, without loss of generality we assume 
$P=H_1\star P_1\in \SH^{\ext}$ and $Q=H_2\star P_2\in \SH^{\ext}$ for $H_i\in \SH^0$ and $P_i\in \SH$. We have, on one hand, 
\begin{align*}
&\Delta((H_1\star P_1)\star (H_2\star P_2))
=\Delta(H_1H_2\star H_2(P_1)\star P_2)
%=&\Delta(H_1H_2)\star \Delta(H_2(P_1)\star P_2)
=\Delta(H_1)\star \Delta(H_2)\star \Delta(H_2(P_1))\star \Delta(P_2).
\end{align*}
On the other hand, using Lemma~\ref{lem:reduce}, we have
\begin{align*}
&\Delta(H_1\star P_1)\star \Delta(H_2\star P_2)
=\Delta(H_1)\star \Delta(P_1)\star \Delta(H_2)\star \Delta(P_2)
=\Delta(H_1)\star \Delta(P_1 \star H_2)\star \Delta(P_2)\\
=&\Delta(H_1)\star \Delta(H_2\star H_2(P_1))\star \Delta(P_2)
=\Delta(H_1)\star \Delta(H_2)\star \Delta(H_2(P_1))\star \Delta(P_2).
\end{align*}
This implies the equality $\Delta((H_1\star P_1)\star (H_2\star P_2))=\Delta(H_1\star P_1)\star \Delta(H_2\star P_2)$ in general.

\begin{proof}[Proof of Theorem~\ref{thm:coprod} \eqref{item2}]
Thanks to  \eqref{item1}, to prove  \eqref{item2} it suffices to show that both sides are equal when applied to  the generators of $\SH^{\ext}$.
By \eqref{coprodH}, it is clear that both sides of \eqref{item2} coincide when restricted on $\SH^0$. 

We now show \eqref{item2} when both sides are restricted on $\SH$.
Without loss of generality, we may assume $P\in \SH$ is a homogeneous element.  
For any $A, B, C$, the projection of $(\id \otimes \Delta) (\Delta(P))$ into the component $\SH_A\otimes \SH_B \otimes  \SH_C$ is denoted by $(\id \otimes \Delta) (\Delta(P))_{A,B,C}$. Similarly, we have $(\Delta \otimes \id) (\Delta(P))_{A,B,C}$.  With this notation, \eqref{item2} becomes
\begin{align*}
(\id \otimes \Delta) (\Delta(P))_{A,B,C}=(\Delta \otimes \id) (\Delta(P))_{A,B,C}.
\end{align*}
Similarly, the projection of $\Delta(P)$ into the component $\SH_A\otimes \SH_{B\cup C}$ is denoted by $ \Delta(P)_{A, B\cup C}$.
It is clear that we only need to show \eqref{item2} after dropping  the sign $(-1)^{(v_2,  \overline{C} v_1)}$ in  $\Delta$.
Applying $\id \otimes \Delta$ to $ \Delta(P)_{A, B\cup C}$, in $\SH_A\otimes \SH_B \otimes  \SH_C$,  we have
\begin{align*}
(\id \otimes \Delta)( \Delta(P)_{A, B\cup C})
=&(\id \otimes \Delta) \left(\frac{ H_A(z_{B\cup C }) P(z_A\otimes z_{B\cup C })}
{\fac(z_{B\cup C }|z_A)}\right)_{A, B, C}\\
=&\frac{H_{A}(z_B) \otimes H_{A}(z_C)  \otimes H_{B}(z_C)  P(z_A\otimes z_{B }\otimes z_C)}{ \fac(z_{B\cup C }|z_A) \fac( z_C|z_B)}.
\end{align*}
Applying $\Delta\otimes \id$ to $ \Delta(P)_{A\cup B,  C}$, in $\SH_A\otimes \SH_B \otimes  \SH_C$,  we have
\begin{align*}
(\Delta\otimes \id)( \Delta(P)_{A\cup B,  C})
=&(\Delta\otimes \id)\left(
\frac{ H_{A\cup B }(z_{ C }) P(z_{A\cup B}\otimes  z_{C })}
{\fac(z_{ C }|z_{A\cup B})}
\right)_{A, B, C}\\
=&\frac{ H_A(z_{ C })  H_B(z_{ C } )H_A(z_{ B } ) P(z_{A}\otimes z_B\otimes z_{C })}
{\fac(z_{ C }|z_{A\cup B})
\fac( z_{B}|z_A)}.
\end{align*}
Therefore, we have $(\mathrm{id}_{\SH} \otimes \Delta) \circ \Delta = (\Delta \otimes \mathrm{id}_{\SH}) \circ \Delta$,  since 
\[\fac(z_{B\cup C }|z_A) \fac( z_C|z_B)=\fac(z_{ C }|z_{A\cup B})
\fac( z_{B}|z_A).\] 
\end{proof}

\begin{proof}[Proof of Theorem~\ref{thm:coprod} \eqref{item3}]
Again it is suffices to show \eqref{item3} on the generators of $\SH^{\ext}$. 

On $\SH^0$, the equality follows from the fact that $\epsilon( H_{k}(w))=1$, which in turn follows from $H_{k}(w)=1+\frac{1}{\fl(h^{(k)})-\fl(w)}$ and the definition of $\epsilon$.

Now we assume $P\in \SH_v$ is a homogeneous element of $\SH$. We  have
\begin{align*}
(\epsilon \otimes \mathrm{id}_{\SH^{\ext}}) \circ \Delta (P)
=(\epsilon \otimes \mathrm{id}_{\SH^{\ext}}) \circ \Delta (P)_{0, v}
=(\epsilon \otimes \mathrm{id})(H_{0}(x_{[1,v]})\otimes P)=P.
\end{align*}
Similarly, $(\mathrm{id}_{\SH^{\ext}} \otimes \epsilon) \circ \Delta (P)
=P.$ 
This completes the proof. 
\end{proof}

\section{A bialgebra pairing}\label{sec:pairing}
The aim of this section is to define the Drinfeld double of the extended shuffle algebra $\SH^{\ext}$. For this purpose, we introduce a bialgebra pairing on $\SH^{\ext}$.

Recall that for a bialgebra  $(A, \star, \Delta)$  with multiplication $\star$, and coproduct $\Delta$, 
the \textit{Drinfeld double} of the bialgebra $A$ is $DA = A \otimes A^\coop$ as a vector space endowed with a  suitable multiplication. Here $A^\coop$ is $A$ as an algebra but with the opposite comultiplication. If $\dim(A)$ is infinite, in order to define $DA$ as a bialgebra, we need a {\it non-degenerate bialgebra pairing}
\[
( \cdot , \cdot) : A \otimes A \to R, \] i.e., an $R$-bilinear non-degenerate pairing such that \[
(a \star b, c) = (a \otimes b, \Delta(c))\text{ and }(c,a \star b) = ( \Delta(c), a \otimes b)\text{ for all $a, b, c \in A $}. 
\]

For a bialgebra $(A, \star, \Delta)$ together with a non-degenerate bialgebra pairing $( \cdot , \cdot)$, the bialgebra structure of $DA=A^-\otimes A^+$, still denoted by $(\star,\Delta)$, is uniquely determined by the following two properties (see, e.g., \cite[\S~2.4]{X}).
\begin{enumerate}
\item $A^-= A^\coop \otimes 1$ and $A^+ = 1 \otimes A$ are both sub-bialgebras of $DA$. 
\item For any $a,b\in A$, write $a^- = a \otimes 1\in A^-$ and $b^+ = 1 \otimes b\in A^+$. Then
\begin{equation}\label{eq: commut rel}
\sum a^-_1 \star b^+_2 \cdot (a_2, b_1) = 
\sum b^+_1 \star a^-_2 \cdot (b_2, a_1), \,\ \text{for all $a, b \in A$,}
\end{equation}
where we follow Sweedler's notation and write  $\Delta(a^-)=\sum a^{-}_1 \otimes a^{-}_2$, 
$\Delta(b^+)=\sum b^+_1 \otimes b^+_2$. 
\end{enumerate}

On $\SH^{\ext,\sph}$ we will describe a natural bialgebra pairing via residues. This pairing turns out to be non-degenerate only on an ad\`ele version of $\SH^{\ext,\sph}$.
\subsection{Scalar product on  ad\`ele ring}
\label{subsec:paring_on_adele}
In this section we work   in the following setup. Take $R=\C$,  and $\bbG$ an algebraic group. We fix a smooth compactification $\overline{\bbG}$ of $\bbG$ and fix a meromorphic section $\omega$ of $T^*\overline{\bbG}$, such that $\omega$ is translation-invariant in $\bbG$ and nowhere vanishing on $\overline{\bbG}$. 
Denote by $F$ the field of rational functions of $\overline{\bbG}$, $\calO_x$ the analytic stalk of the structure sheaf of $\overline{\bbG}$ at a closed point $x\in \overline{\bbG}$, and $K_x$ the  field of fractions of $\calO_x$. 
Let $\bbA$ be the restricted product $\prod_{x\in \overline{\bbG}}'K_x$, i.e., an element in $\bbA$ is a collection of rational functions $\{r_x\}_{x\in \overline{\bbG}}$, so that $r_x\in\calO_x$ except for finitely many $x$. 
We will call $\bbA$ the ring of repartitions.\footnote{In literature this ring is also called the ring of  pre-ad\`eles, where the completion of this ring is called the ring of ad\`eles. Hence the notation $\bbA$.}
We consider a $\C$-valued scalar product on $\bbA$ 
\[
(u, v):=\sum_{x\in \overline{\bbG}} \Res_{x} ( \omega \cdot u_x \cdot v_{-x}),
\] which is a modification of the one used in \cite[Example~3.4]{D86}. Here $-x$ is the group inverse of $x$ extended to $\overline{\bbG}$.

We consider the following extension of the definition of the ring of repartitions on $\overline{\bbG}^{(n)}$ for any $n\in \bbN$. 
For any $x\in \overline{\bbG}$, we have $(x,\dots, x)\in \overline{\bbG}^n$, the complete local ring at which is denoted by $\calO^n_x$. Let $K^n_x$ be the localization of $\calO^n_x$ at the divisor $\fS_n\cdot (\{x\}\times \overline{\bbG}^{n-1})$. Note that $\omega$ induces an $n$-form 
$\omega^n:=\bigwedge_{i=1}^n\omega_i$ 
on $\overline{\bbG}^n$,  where $\omega_i$ is the pullback of $\omega$ to $\overline{\bbG}^n$ via the $i$-th projection. Note that for any $f\in K^n_x$, the residue of $f\cdot \omega^n$ at $(x,\dots, x)$ is well-defined, denoted by $\Res_x f\omega$. 
For any $n$-form $\eta\in H^{0}(U-D, \Omega^n)$ in a neighborhood of certain divisor $D\subset U$, see \cite[p.650]{GH} for the definition of the residue $\Res(\eta)$. Note that in the definition,  $\eta$ could have a higher order pole along the divisor $D\subset U$. 

\begin{lemma}\label{lem:res_ord_general}
For any $\sigma\in \fS_n$ and $f\in K^n_x$, we have \[\Res_x f\omega=\Res_x\sigma(f)\omega.\]
\end{lemma}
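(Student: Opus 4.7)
The plan is to reduce the statement to a direct local-coordinate computation at the fixed point $(x,\dots,x)\in \overline{\bbG}^n$. I would choose a local uniformizer $u$ of $\overline{\bbG}$ at $x$, write $\omega=\lambda(u)\,du$ for a nowhere-vanishing holomorphic $\lambda$, and pull this back to product coordinates $(u_1,\dots,u_n)$ on $\overline{\bbG}^n$ centered at $(x,\dots,x)$. Then
\[
\omega^n=\lambda(u_1)\cdots\lambda(u_n)\,du_1\wedge\cdots\wedge du_n,
\]
the divisor $\fS_n\cdot(\{x\}\times\overline{\bbG}^{n-1})$ becomes $\bigcup_{i=1}^n\{u_i=0\}$, and the scalar factor $\prod_i\lambda(u_i)$ is manifestly $\fS_n$-symmetric. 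This symmetry, built in because $\omega$ is pulled back diagonally from $\overline{\bbG}$, is the crux of the argument.

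Next I would invoke the concrete Laurent-coefficient form of the Grothendieck residue from \cite[p.~650]{GH}. Writing any $f\in K^n_x$ as $f=g/(u_1^{a_1}\cdots u_n^{a_n})$ with $g\in\calO^n_x$ and $a_i\in\bbN$, one obtains
\[
\operatorname{Res}_x(f\omega^n)=[u_1^{a_1-1}\cdots u_n^{a_n-1}]\bigl(g(u)\,\lambda(u_1)\cdots\lambda(u_n)\bigr).
\]
Applying $\sigma\in\fS_n$ replaces the numerator with $g(u_{\sigma(1)},\dots,u_{\sigma(n)})$ and the denominator with $u_1^{a_{\sigma^{-1}(1)}}\cdots u_n^{a_{\sigma^{-1}(n)}}$, so $\operatorname{Res}_x(\sigma(f)\omega^n)$ is the coefficient of the correspondingly permuted monomial $u_1^{a_{\sigma^{-1}(1)}-1}\cdots u_n^{a_{\sigma^{-1}(n)}-1}$ in the permuted numerator times $\prod_i\lambda(u_i)$. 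A formal relabeling $v_i:=u_{\sigma(i)}$, combined with the $\fS_n$-symmetry of $\prod_i\lambda(u_i)$, converts this to the coefficient of $v_1^{a_1-1}\cdots v_n^{a_n-1}$ in $g(v)\prod_i\lambda(v_i)$, which is exactly $\operatorname{Res}_x(f\omega^n)$.

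Alternatively, and perhaps more elegantly, one can realize the residue as a torus integral $\tfrac{1}{(2\pi i)^n}\oint_{T_\epsilon}f\omega^n$ over the product cycle $T_\epsilon=\{|u_1|=\cdots=|u_n|=\epsilon\}$ with the product orientation. Under the pullback $\sigma^\ast$ (for the permutation biholomorphism of $\overline{\bbG}^n$) the top form $\omega^n$ picks up a factor of $\sign(\sigma)$ from $du_1\wedge\cdots\wedge du_n$, while the product orientation of $T_\epsilon$ also changes by $\sign(\sigma)$; the two signs cancel and yield the claim at once. I do not anticipate any serious obstacle beyond careful bookkeeping of the $\fS_n$-action on coordinate orderings: the whole point is that the sign introduced by permuting the wedge factors $du_i$ is exactly compensated by the ordering convention in the Grothendieck residue, with the $\omega$-dependent scalar $\prod_i\lambda(u_i)$ remaining symmetric throughout.
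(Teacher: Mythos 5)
Your proposal is correct, and your ``alternative'' argument at the end is exactly the paper's proof: the authors realize $\Res_x f\omega$ as $\int_\Gamma f\cdot\omega$ over a real $n$-cycle $\Gamma$ (citing \cite[p.650]{GH}) and observe that $\sigma^*(\omega)=\sign(\sigma)\omega$ while $\sigma(\Gamma)=\Gamma$ with orientation differing by $\sign(\sigma)$, so the two signs cancel. Your primary Laurent-coefficient computation is just an explicit coordinate unwinding of the same fact and is also fine.
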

\begin{proof}
Recall that by \cite[p.650]{GH}, there is a real $n$-cycle $\Gamma$ so that $\Res_x f\omega=\int_{\Gamma} f\cdot \omega$. For an element $\sigma\in \fS_n$, let  $\sigma: U\to U$ be the map induced by the $\fS_n$ action on $U$. 
We have 
\[
 \int_{\Gamma} f\cdot \omega=\int_{\sigma(\Gamma)} \sigma^* (f\cdot \omega).\]
The lemma follows from the facts that $\sigma^*(\omega)=\sign(\sigma)\omega$, and $\sigma(\Gamma)=\Gamma$ with orientation differing by $\sign(\sigma)$. 
\end{proof}
\begin{lemma}\label{lem:nondeg_general}
The pairing $(\cdot,\cdot)$ on $K^n_x$, sending $f,g\in K^n_x$ to $\Res_x (f\cdot g)\omega$ is non-degenerate. 
\end{lemma}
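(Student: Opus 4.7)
The plan is to reduce the statement to the purely formal fact that on a Laurent polynomial ring each monomial is paired non-trivially with its ``dual'' inverse monomial. Since $\omega$ is nowhere vanishing at $x$, I would first choose an analytic local coordinate $u$ at $x$ on $\overline{\bbG}$ with $\omega = du$, and then pull back to $\overline{\bbG}^n$ to get coordinates $u_1, \dots, u_n$ at $(x, \dots, x)$ such that $\omega^n = du_1 \wedge \cdots \wedge du_n$. Under these coordinates, $\calO^n_x \cong \C\llbracket u_1, \dots, u_n \rrbracket$. The divisor $\fS_n \cdot (\{x\} \times \overline{\bbG}^{n-1})$ is cut out by $u_1 u_2 \cdots u_n = 0$, so inverting its defining equation identifies $K^n_x$ with $\C\llbracket u_1, \dots, u_n \rrbracket[u_1^{-1}, \dots, u_n^{-1}]$. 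Every $f \in K^n_x$ therefore admits a Laurent expansion $f = \sum_{a \in \Z^n} c_a u^a$ whose support lies in $[-N, \infty)^n$ for some $N = N(f)$, and the standard formula for the iterated residue identifies $\Res_x (f \cdot \omega^n)$ with the coefficient $c_{(-1, \dots, -1)}$.

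Granted this identification, non-degeneracy is immediate. Given $0 \neq f \in K^n_x$, pick any $a \in \Z^n$ with $c_a \neq 0$ and set
\[
g := u_1^{-a_1 - 1} u_2^{-a_2 - 1} \cdots u_n^{-a_n - 1} \in K^n_x.
\]
The coefficient of $u_1^{-1} \cdots u_n^{-1}$ in the product $f \cdot g$ is a sum of terms $c_{a'}$ indexed by $a' \in \Z^n$ satisfying $a' + (-a - \mathbf{1}) = -\mathbf{1}$; the unique such $a'$ is $a' = a$. Hence $(f, g) = \Res_x (f g \, \omega^n) = c_a \neq 0$, which establishes non-degeneracy.

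The only genuine point of care is justifying that the pairing on Laurent series is well defined, i.e.\ that the coefficient of $u_1^{-1} \cdots u_n^{-1}$ in $f \cdot g$ is a finite sum. Because the support of $f$ is bounded below in each coordinate and the support of $g$ is a single monomial, this finiteness is automatic; no convergence issue arises. Thus I do not expect a serious obstacle beyond setting up the Laurent-series model of $K^n_x$ cleanly.
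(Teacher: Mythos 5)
Your overall strategy --- identify $K^n_x$ with a Laurent-series ring, read the residue off as the coefficient of $u_1^{-1}\cdots u_n^{-1}$, and pair a nonzero $f$ against the dual monomial of one of its nonzero coefficients --- is sound and genuinely different from the paper's argument, which instead clears denominators by a power of $X=\prod_i x_i$ to reduce to a form with a first-order pole along the divisor and then invokes the regularity criterion of \cite[p.659]{GH}. However, your very first step fails at exactly the points that matter. You normalize $\omega=du$ for an analytic local coordinate $u$ at $x$, justifying this by non-vanishing of $\omega$; but $\omega$ is only assumed to be a \emph{meromorphic}, translation-invariant, nowhere-vanishing section of $T^*\overline{\bbG}$, and when $\bbG$ is affine one has $\overline{\bbG}=\PP^1$, whose canonical bundle has degree $-2$, so $\omega$ necessarily has poles of total order $2$ on $\overline{\bbG}\setminus\bbG$. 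For $\bbG=\bbG_a$ and $\omega=dx$ this is a double pole at $\infty\in\PP^1$ --- precisely the point at which residues are taken in \S~\ref{sec;Yangian}. At such a point no local coordinate $u$ satisfies $\omega=du$ (any primitive of $\omega$ has a pole), so your identification of $\Res_x(f\omega)$ with $c_{(-1,\dots,-1)}$ does not apply there, while the lemma is quantified over all $x\in\overline{\bbG}$.

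The repair is short. Let $s$ be a genuine local coordinate at $x$ and write $\omega=s^{m}h(s)\,ds$ with $h(0)\neq 0$ (with $m\le 0$ permitted). Given $0\neq f=\sum_a c_a s^a$ in your Laurent model and $a$ with $c_a\neq 0$, set
\[
g:=\prod_{i=1}^n s_i^{-a_i-1-m}\,h(s_i)^{-1},
\]
which lies in $K^n_x$ since each $h(s_i)^{-1}$ is a unit of $\calO^n_x$ and negative powers of the $s_i$ are available after localizing at the divisor. Then $f\,g\,\omega^n=\bigl(f\cdot\prod_i s_i^{-a_i-1}\bigr)\,ds_1\wedge\cdots\wedge ds_n$, the pole locus is still the normal-crossings divisor $s_1\cdots s_n=0$, and your coefficient computation goes through verbatim to give $(f,g)=c_a\neq 0$. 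With this modification your proof is complete, and it is arguably more self-contained than the paper's, since it replaces the appeal to \cite[p.659]{GH} by an explicit dual element.
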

\begin{proof}
Let $X=\prod_{i=1}^n x_i$ be the function on $\overline{\bbG}^n$,  where $x_i=p_i^*(\fl)$ is the pullback of the local uniformizer $\fl$ of $\overline{\bbG}$ along the $i$-th projection $p_i: \overline{\bbG}^n \to \overline{\bbG}$. Assume $f\neq0$, then there is an integer $l$ so that  $X^lf\omega$ has pole of order 1 along the divisor of $X$.  For any $g\in K^n_x$, there is some $N$ large enough, such that $X^Ng$ is regular. We may only work with regular $g$. By \cite[p.659]{GH}, 
 if $\Res_xX^lfg\omega=0$ for any $g$, then $X^lf\omega$ is regular along $X$, which contradicts with the assumption that $X^lf\omega$ has a pole of order 1. 
\end{proof}

Let  $\bbA^{n}$ be the restricted product of $K^n_x$ over all $x\in \overline{\bbG}$, and let $\bbA^{(n)}= (\bbA^{n})^{ \fS_n}$ be the $\fS_n$-invariant part. 
Similarly, for any $v\in\bbN^I$, we can define the ring of repartitions $\bbA^{(v)}=\prod_{i\in I} \bbA^{(v^i)}$ of $\overline{\bbG}^{(v)}$. For any $f\in \bbA^{(v)}$ and any $x\in \overline{\bbG}$, the $x$-component of $f$ is denoted by $f_x$.

We define the ad\`ele version of the shuffle algebra
$\SH_{\bbA}=\bigoplus_{v\in \bbN^I}\SH_{\bbA, v}$. For any $v\in \N^I$, $\SH_{\bbA, v}$ is the localization of the ad\`ele ring $\bbA^{(v)}$ on $\overline{\bbG}^{(v)}$, localized at $I_{\fac}$, where $I_{\fac}$ is the ideal of the union of the null-divisors of $\fac(x_A|x_B)$ over all $(A,B)\vdash v$. 
The action of  $\SH^0$ on $\SH$ induces an action of $\SH^0$ on $\SH_{\bbA}$. Therefore, we also have an ad\`ele version of extended shuffle algebra $\SH_{\bbA}^{\ext}=\SH^0\rtimes \SH_{\bbA}$. The bialgebra structure on $\SH^{\ext}$ induces a bialgebra structure on $\SH_{\bbA}^{\ext}$. 
Let $\SH_{\bbA}^{\sph,\ext} \subset \SH_{\bbA}^{\ext}$ be the spherical subalgebra. 
\begin{lemma}
The subalgebra $\SH_{\bbA}^{\sph, \ext}$ of $\SH_{\bbA}^{\ext}$ is a sub-bialgebra. 
\end{lemma}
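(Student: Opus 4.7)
The plan is to verify the two defining properties of a sub-bialgebra: that $\SH_{\bbA}^{\sph, \ext}$ is closed under the multiplication $\star$ and under the comultiplication $\Delta$ inherited from $\SH_{\bbA}^{\ext}$. Closure under $\star$ is immediate from the definition: by analogy with the definition of $\SH^{\sph,\ext}$ given earlier, $\SH_{\bbA}^{\sph,\ext}$ is the subalgebra of $\SH_{\bbA}^{\ext}$ generated by $\SH^{0}$ together with the degree-$e_k$ pieces $\SH_{\bbA,e_k}$ for $k\in I$, so it is a subalgebra by construction. The substantive content is closure under $\Delta$.

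Since $\Delta$ on $\SH_{\bbA}^{\ext}$ is an algebra homomorphism (this is the ad\`ele analogue of Theorem~\ref{thm:coprod}\eqref{item1}, and is proved by exactly the same formal manipulation, since the shuffle and coproduct formulas only involve rational functions and formal expansions that make sense in $\bbA^{(v)}$), and since $\SH_{\bbA}^{\sph,\ext}\widehat{\otimes}\SH_{\bbA}^{\sph,\ext}$ is closed under the induced product $(a_1\otimes b_1)\star(a_2\otimes b_2)=(a_1\star a_2)\otimes(b_1\star b_2)$, it suffices to check that $\Delta$ carries a chosen generating set of $\SH_{\bbA}^{\sph,\ext}$ into $\SH_{\bbA}^{\sph,\ext}\widehat{\otimes}\SH_{\bbA}^{\sph,\ext}$.

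Thus I would check the two families of generators separately. For $H_k(w)\in\SH^{0}$, formula~\eqref{coprodH} gives $\Delta(H_k(w))=H_k(w)\otimes H_k(w)$, which lies in $\SH^{0}\otimes\SH^{0}\subset \SH_{\bbA}^{\sph,\ext}\widehat{\otimes}\SH_{\bbA}^{\sph,\ext}$. For a generator $f\in \SH_{\bbA,e_k}$, the formula~\eqref{eq:delta f} was derived by specializing \eqref{eq:coprod} to $v=e_k$, and its derivation is formal: it uses only the formulas defining $\Delta$ together with $(A,B)\vdash e_k$ having $A$ or $B$ empty. The same derivation therefore applies to $f\in\SH_{\bbA,e_k}$, yielding
\[
\Delta(f(x^{(k)}))=H_k(x^{(k)})\otimes f(x^{(k)})+f(x^{(k)})\otimes 1,
\]
which lies in $\SH^{0}\widehat{\otimes}\SH_{\bbA,e_k}\,+\,\SH_{\bbA,e_k}\otimes R\subset \SH_{\bbA}^{\sph,\ext}\widehat{\otimes}\SH_{\bbA}^{\sph,\ext}$. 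Propagating these two cases through arbitrary $\star$-products of generators via $\Delta(P\star Q)=\Delta(P)\star\Delta(Q)$ gives the desired inclusion $\Delta(\SH_{\bbA}^{\sph,\ext})\subset \SH_{\bbA}^{\sph,\ext}\widehat{\otimes}\SH_{\bbA}^{\sph,\ext}$, completing the proof.

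The main point to verify carefully is the first paragraph's assertion that Theorem~\ref{thm:coprod}\eqref{item1} transfers to the ad\`ele setting; this is the only place where one must check that the passage from $\SH^{\ext}$ to $\SH_{\bbA}^{\ext}$ is harmless. Concretely, one must observe that the two formal series expansions defining the completion $\widehat{\otimes}$ still make sense for repartitions, and that localizing at the ideal $I_{\fac}$ does not disturb the compatibility of $\star$ and $\Delta$ established in the proof of Theorem~\ref{thm:coprod}. Once this is in hand, the rest of the argument is a routine reduction to generators.
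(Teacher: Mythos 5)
Your proposal is correct and follows essentially the same route as the paper: reduce to generators via the fact that $\Delta$ is an algebra homomorphism, then observe that \eqref{coprodH} and \eqref{eq:delta f} keep the generators inside $\SH_{\bbA}^{\sph,\ext}\widehat{\otimes}\SH_{\bbA}^{\sph,\ext}$. The extra care you take about transferring Theorem~\ref{thm:coprod}\eqref{item1} to the ad\`ele setting is reasonable but already absorbed by the paper's preceding assertion that the bialgebra structure on $\SH^{\ext}$ induces one on $\SH_{\bbA}^{\ext}$.
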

\begin{proof}
Since $\Delta$ is an algebra homomorphism, it suffices to show that $\Delta(f)\in \SH_{\bbA}^{\sph, \ext}$ for any generator $f\in \SH_{\bbA, e_i}^{\sph, \ext}$ with $i\in I$. The latter follows from the formula of $\Delta$ \eqref{eq:delta f}.
\end{proof}

Let  $|A|!=\prod_{i\in I} |A^{i}|!$ and 
we introduce the notation. 
\begin{align*}
\fac(x_A)
:=&\prod_{i \in I}
\prod_{\{s, t \in A^{i} \mid s\neq t\}}
\frac{\fl(x^{(i)}_s-x^{(i)}_t+t_1+t_2)}{\fl(x^{(i)}_t-x^{(i)}_s)}
\cdot\\
& \cdot\prod_{h\in H}\Big(
\prod_{s\in A^{\out(h)}}
\prod_{t\in A^{\inc(h)}}
\fl(x_t^{ \inc(h)}-x_s^{\out(h)}+ m_h t_1)
\prod_{s\in A^{\inc(h)}}
\prod_{t\in A^{\out(h)}}
\fl(x_t^{\out(h)}-x_s^{\inc(h)}+m_{h^*}t_2)
\Big).\end{align*}

Note that the null divisor of $\fac(x_A)$ coincides with the vanishing locus of $I_{\fac}$. 
For any $f,g\in \SH_{\bbA,v}^{\sph,\ext}$, consider the function $\frac{f\cdot g}{\fac(x_A)}$, which {\it a priori} could have a possible pole along the null divisor of $\fac(x_A)$. However, we have the following  
\begin{lemma}\label{lem:sph_regular}
For any $v\in \bbN^I$ and any $f,g\in \SH_{\bbA,v}^{\sph,\ext}$, the function $\frac{f\cdot g}{\fac(x_A)}$
has no poles along the vanishing locus of $I_{\fac}$.
\end{lemma}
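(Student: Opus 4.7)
The plan is to reduce to the generators of $\SH_\bbA^{\sph,\ext}$ and combine an explicit shuffle expression for $f$ and $g$ with a factorization identity for $\fac(x_A)$ to produce the desired cancellation. Since the lemma is $R$-bilinear in $(f,g)$ and $\SH_\bbA^{\sph,\ext}$ is generated as an $R$-algebra under the shuffle product by $\SH^0$ and the pieces $\SH_{\bbA,e_k}$ for $k\in I$, we may assume that each of $f,g$ is obtained by iterating the shuffle formula~\eqref{shuffle formula} on single-variable generators. Unraveling the iterated product, up to sign one obtains
\[
f \;=\; \sum_{\sigma}\sigma\!\left(\tilde f(y_1,\dots,y_n)\prod_{1\le j<k\le n}\fac(y_j\mid y_k)\right),
\]
with $n=|v|$, each $y_j$ of fixed color $k_j$ satisfying $\sum_j e_{k_j}=v$, $\tilde f$ regular, and $\sigma$ running over cosets of $\prod_i\fS_{v^i}$ in $\fS_n$. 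A parallel expression holds for $g$.

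Next, the definition of $\fac(x_A)$ admits the factorization
\[
\fac(x_A) \;=\; \prod_{1\le j<k\le n}\fac(x_{\{j\}}\mid x_{\{k\}})\,\cdot\,\fac(x_{\{k\}}\mid x_{\{j\}}),
\]
obtained by splitting the index set of ordered pairs $\{(s,t)\mid s\neq t\}$ into the two sub-pairs $(s,t)$ and $(t,s)$ for each unordered pair. Consequently every irreducible component $D$ of $V(I_\fac)$ is cut out by a single factor of the form $\fl(x_s^{(i)}-x_t^{(i)}+t_1+t_2)$ or by one of the two arrow-type factors from $\fac_2$, so that $D$ is indexed by exactly one unordered pair of slots together with a specification of which defining factor is involved.

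The main step is a local combinatorial cancellation near a generic point of each component $D$. Grouping the terms in the double sum expressing $f\cdot g$ according to the pair of slots whose $\fac$-factor cuts out $D$, one checks that the $\fac(y_j\mid y_k)$-factors appearing in the expansion of $f$ (or of $g$) supply the compensating zero, so that $f\cdot g/\fac(x_A)$ extends regularly across $D$. The principal obstacle I anticipate is organizing the double symmetrization so that the cancellation is visible: after symmetrization the individual $\fac(y_j\mid y_k)$-factors are no longer manifestly present in $f$ or $g$, and care is needed to match them against the factors of $\fac(x_A)$ arising from the factorization above. The natural resolution is to reorganize the sum by pairing each shuffle $\sigma$ in the expansion of $f$ with a conjugate shuffle $\tau$ in the expansion of $g$ indexed by the same slot-pair that cuts out $D$, and then to invoke the skew-symmetric pairing $\fac(y_j\mid y_k)\fac(y_k\mid y_j)$ appearing in the factorization to complete the match. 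Running this over all components of $V(I_\fac)$ then yields the claimed regularity.
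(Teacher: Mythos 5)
Your reduction to iterated shuffle products of single-variable generators, the expansion $f=\sum_\sigma\sigma\bigl(\tilde f\prod_{j<k}\fac(y_j|y_k)\bigr)$, and the factorization of $\fac(x_A)$ over ordered pairs are all correct, and are what the paper implicitly invokes. The gap is in your main step: the claimed cancellation is \emph{false} on the components of $V(I_{\fac})$ that are translates of diagonals by a nonzero parameter. Take $Q$ the one-vertex quiver with no arrows, $\bbG=\bbG_a$, $v=2$, and $f=g=1\star 1$. The shuffle formula gives $1\star 1=\pm\bigl(\tfrac{x_1-x_2+t_1+t_2}{x_2-x_1}+\tfrac{x_2-x_1+t_1+t_2}{x_1-x_2}\bigr)=\mp 2$, a nonzero constant, while $\fac(x_A)$ vanishes to order one along $D=\{x_1-x_2+t_1+t_2=0\}$; hence $f\cdot g/\fac(x_A)$ has an honest simple pole along $D$. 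The mechanism you propose cannot repair this: for the pair of slots cutting out $D$, each term of the symmetrization of $f$ carries exactly one of $\fac(y_j|y_k)$ or $\fac(y_k|y_j)$, and these two factors vanish on the two \emph{distinct} divisors $\fl(x_s-x_t\pm(t_1+t_2))=0$, so only one of them vanishes on $D$. Thus roughly half the terms of $f$ and half the terms of $g$ survive on $D$, and their cross-products do not cancel; no pairing of a shuffle $\sigma$ for $f$ with a conjugate shuffle $\tau$ for $g$ changes this, as the example shows.

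What saves the lemma — and what the paper's proof actually does — is a case distinction that your argument omits. When $t_1\neq 0$ and $t_2\neq 0$, the offending components such as $D$ do not pass through any small-diagonal point $(x,\dots,x)$ of $\overline{\bbG}^v$, hence are invertible in the local rings $\calO^v_x$ in which the $x$-components of ad\`ele elements live; poles along them are therefore invisible in $K^v_x$, which is the meaning of ``this is clear if $t_1\neq0$ and $t_2\neq0$.'' The order-counting you propose is only needed, and only valid, in the degenerate case (WLOG $t_1=0$), where the arrow-type factors $\fl(x_t^{(j)}-x_s^{(i)}+m_ht_1)$ collapse onto cross-colour diagonals: there \emph{both} $\fac(x_{\{s\}}|x_{\{t\}})$ and $\fac(x_{\{t\}}|x_{\{s\}})$ vanish, so every term of the symmetrized expansions of $f$ and of $g$ vanishes to at least half the order of $\fac(x_A)$, and comparing vanishing orders closes the argument. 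Without first discarding the non-diagonal components by the locality argument, your proof attempts to establish a statement that is false as a statement about rational functions on $\overline{\bbG}^{(v)}$.
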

\begin{proof}
This is clear if $t_1\neq0$ and $t_2\neq0$. Without loss of generality, we can assume $t_1=0$. By assumption $f,g\in  \SH_{\bbA,v}^{\sph,\ext}$, we take $f\cdot g$ to be product of elements in $\SH_{\bbA,e_i}^{\sph,\ext}$ for $i\in I$. By the multiplication formula \eqref{shuffle formula} and the formula of  $\fac(x_A)$,  the vanishing order of $f\cdot g$ is at least the vanishing order of $\fac(x_A)$ along $I_{\fac}$. This completes the proof.
\end{proof}

We define a non-degenerate bialgebra pairing on the ad\`ele shuffle algebra $\SH_\bbA^{\sph,\ext}$
\[
( \cdot , \cdot) : \SH_\bbA^{\sph,\ext} \otimes \SH_\bbA^{\sph,\ext} \to \bbC
\]
as follows:
\begin{itemize}
\item
For $f\in \SH_{\bbA,v}^{\sph}$, and $g\in \SH_{\bbA,w}^{\sph}$, we define $(f, g)=0$ if $v\neq w$;
\item
For $h\in \SH^0$, and $f\in \SH_{\bbA}^{\sph}$, we define $(h, f)=0$;
\item
For $H_k(u) \in \SH^0[\![u]\!]$, we define 
$(H_k(u), H_k(w))=\frac{\fac(u|w)}{\fac(w|u)}$ for any $k\in I$. 
\end{itemize}
Note that  in particular for $H_k(u) \in \SH^0[\![u]\!]$ we have $(1,H_k(u))=1$.

For any $i\in I$ and $f, g\in \SH_{\bbA,e_i}$, we follow Drinfeld \cite{D86} and define 
\[
(f, g):=\sum_{x\in \overline{\bbG}} \Res_{x} ( f_x \cdot g_{-x}\cdot\omega  ). 
\]
In general, for $f, g\in \SH_{\bbA,v}^{\sph}$, 
we define the pairing $(f, g)$ to be
\[
(f, g):= \sum_{x\in \overline{\bbG}}\Res_{x } \Big( \frac{f(x_A)\cdot g(-x_A)}{ { |A|!} \fac(x_A)} \omega\Big).
\]

For any $x\in \overline{\bbG}$,  the $x$-component of $f$ and $g$ are regular away from  the divisor $\fS_n\cdot (\{x\}\times \overline{\bbG}^{n-1})$ by definition. The only possible pole of $\frac{f(x_A)\cdot g(-x_A)}{ { |A|!} \fac(x_A)}$ is along the vanishing locus of $I_{\fac}$. Hence, 
Lemma~\ref{lem:sph_regular} implies that the $x$-component of $\frac{f(x_A)\cdot g(-x_A)}{ { |A|!} \fac(x_A)}$ is regular away from the divisor $\fS_n\cdot (\{x\}\times \overline{\bbG}^{n-1})$, hence lies in $K_x^v$. In particular, the residue of $\frac{f(x_A)\cdot g(-x_A)}{ { |A|!} \fac(x_A)} \omega$ is well-defined.

\begin{theorem}
Assume the numbers $m_h,m_{h^*}$ associated to $h\in H$ are as in Remark~\ref{rmk:weights}(2). 
On $\SH_\bbA^{\sph,\ext}$, the pairing $(\cdot,\cdot )$ is a  super-symmetric non-degenerate bialgebra pairing. 
\end{theorem}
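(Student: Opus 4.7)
The proof breaks into three separate claims — super-symmetry, non-degeneracy, and the bialgebra pairing axiom — which I would handle in order of increasing difficulty.

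\textbf{Super-symmetry.} In the setup of Remark~\ref{rmk:weights}(2) the local uniformizer $\fl$ can be taken to be odd, so the change of variable $x\mapsto -x$ in the integrand is admissible thanks to the translation-invariance of $\omega$. A direct computation of $\fac(-x_A)/\fac(x_A)$ using the odd-ness of $\fl$ produces a sign governed by the Euler-Ringel form of the relevant dimension vectors, which matches the intended super-sign $(-1)^{(v,\overline{C}v)}$-type factor; this is a purely formal manipulation at the level of generating series.

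\textbf{Non-degeneracy.} This is argued grading by grading, and separately on the $\SH^0$- and $\SH_{\bbA}^{\sph}$-parts, since cross-pairings vanish by definition. On $\SH^0$ the formula $(H_k(u),H_k(w))=\fac(u|w)/\fac(w|u)$ is visibly non-degenerate at the level of generating series. For $f\in \SH_{\bbA,v}^{\sph}$ nonzero, choose $x\in\overline{\bbG}$ at which the component $f_x$ is nonzero; Lemma~\ref{lem:sph_regular} makes $\frac{f(x_A)g(-x_A)}{\fac(x_A)}$ regular along $I_{\fac}$, so the problem reduces to the residue pairing on $K_x^v$, and non-degeneracy is Lemma~\ref{lem:nondeg_general} applied (after the substitution $x_A\mapsto -x_A$ to the $g$-slot). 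The test element $g$ can be built to be supported near $-x$ and extended by zero elsewhere.

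\textbf{Bialgebra pairing.} Both identities $(a\star b,c)=(a\otimes b,\Delta(c))$ and $(c,a\star b)=(\Delta(c),a\otimes b)$ reduce, by multiplicativity of the pairing and the bialgebra axioms, to checking the identity on a generating set of $\SH_{\bbA}^{\sph,\ext}$, namely $\SH^0$ together with the degree-$e_i$ components. The purely-$\SH^0$ case follows from $\Delta H_k(u)=H_k(u)\otimes H_k(u)$ and the defining formula on $\SH^0$. The main case is $a\in\SH_{\bbA,v_1}^{\sph}$, $b\in\SH_{\bbA,v_2}^{\sph}$, $c\in\SH_{\bbA,v_1+v_2}^{\sph}$. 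Expanding the LHS using the shuffle formula~\eqref{shuffle formula} and the RHS using the coproduct~\eqref{eq:coprod}, one obtains on both sides a sum of residues indexed by partitions $(A,B)\vdash v_1+v_2$. After the $\fS_{v_1}\times\fS_{v_2}$-invariance of the residue (Lemma~\ref{lem:res_ord_general}) the combinatorial factor $|A|!$ and the Euler-Ringel sign $(-1)^{(v_2,\overline{C}v_1)}$ appearing in both \eqref{shuffle formula} and \eqref{eq:coprod} match up. The $\fac$-factors then telescope: the numerator factor $\fac(x_A|x_B)$ from the shuffle product on the LHS cancels against the denominator $\fac(x_{[v_1+1,v]}|x_{[1,v_1]})$ in the coproduct on the RHS, while the $H_{[1,v_1]}(x_{[v_1+1,v]})$-factor in $\Delta$ is absorbed by the residue operation itself.

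The main obstacle, and the heart of the computation, is precisely this last cancellation: the poles of $H_k(w)$ (supported on $\fl(w)=\fl(h^{(k)})$) correspond to variable identifications, and their contribution to the residue must reproduce the $\fac$-factors of the shuffle product. The natural route is a partial-fractions / Cauchy-kernel identity combined with the conjugation formula~\eqref{eqn:conj} and the key identity $H_B(z_C)\cdot P(z_A)=P(z_A)\cdot H_B(z_C)\cdot \widehat{\Phi}(z_C|z_A)$ used in the proof of Theorem~\ref{thm:coprod}\eqref{item1}; this identity interlocks the two rational factors in exactly the way needed to match the residues term by term. The careful tracking of signs and of the $\SH^0$-action's contribution across the different terms of $\Delta(c)$ is where the bookkeeping will be most delicate.
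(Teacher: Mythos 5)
Your three-part decomposition (super-symmetry, non-degeneracy, bialgebra axiom) and most of your toolkit --- the $\fS_n$-invariance of residues, Lemma~\ref{lem:nondeg_general}, and the commutation identity $H_{A_o}(x_{B_o})P = P\,H_{A_o}(x_{B_o})\widehat{\Phi}(x_{B_o}|x_{A_o})$ --- coincide with the paper's proof. But two of your mechanisms, as described, are wrong and would derail the computation. First, the super-symmetry sign does \emph{not} come from $\fac(-x_A)/\fac(x_A)$: under the specialization of Remark~\ref{rmk:weights}(2) one has $\fac(-x_A)=\fac(x_A)$ exactly (the $h$- and $h^*$-factors pair off), and the sign comes solely from $\omega(-x_A)=\pm\,\omega(x_A)$ under the group inverse. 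Second, and more seriously, the factor $H_{A_o}(x_{B_o})$ in $\Delta(P)$ is \emph{not} "absorbed by the residue operation," and the poles of $H_k$ contribute nothing: those poles sit along $\fl(w)=\fl(h^{(k)})$, involving the auxiliary Cartan variables, whereas the residues in the pairing are taken only at diagonal points $(x,\dots,x)\in\overline{\bbG}^{\,v}$. What actually happens is that after commuting $H_{A_o}(x_{B_o})$ past $P$, the resulting $\widehat{\Phi}(x_{B_o}|x_{A_o})$ converts the coproduct denominator $\fac(x_{B_o}|x_{A_o})$ into $\fac(x_{A_o}|x_{B_o})$, and the leftover $H$-factor then drops out of the pairing entirely because $(1,H_k(w))=1$ by definition.

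Your description of the $\fac$-cancellation is also too coarse to close the argument. The numerator $\fac(x_A|x_B)$ from the shuffle product does not cancel directly against the coproduct denominator $\fac(x_B|x_A)$ --- these are different functions, not reciprocals. The actual bookkeeping needs two further inputs you omit: the factorization $\fac(x_{A\cup B})=\fac(x_A)\,\fac(x_B)\,\fac(x_A|x_B)\,\fac(x_B|x_A)$ of the denominator of the degree-$v$ pairing, against which $\fac(x_A|x_B)$ cancels, and the identity $\fac(-x_{A}|-x_{B})=\fac(x_{B}|x_{A})$ (again special to Remark~\ref{rmk:weights}(2)), which is what makes the coproduct denominator, evaluated at $-x$ inside the pairing, match the surviving cross-factor $\fac(x_B|x_A)$. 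Without these two identities the two sides of $(f_1\star f_2,P)=(f_1\otimes f_2,\Delta(P))$ do not visibly agree, so you should state and verify them rather than appeal to a telescoping that is not literally there.
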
 

This theorem is proved through the following lemmas. 
\begin{lemma}
The pairing $( \cdot , \cdot)$ on $\SH^{\ext}_{\bbA}$ is super-symmetric, i.e., $(f, g)$ equals $(g,f)$ up to a sign.
\end{lemma}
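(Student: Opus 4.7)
The plan is to perform the substitution $x_A \mapsto -x_A$ inside the residue sum defining $(g,f)$ and compare the result with $(f,g)$ term by term. First I would record the effect of this substitution on the ingredients of the integral: since $\omega$ is a translation-invariant differential on the one-dimensional group $\bbG$ (and nowhere vanishing), inversion acts by $[-1]^*\omega = -\omega$, so $\omega^{|A|}$ picks up a factor $(-1)^{|A|}$. The residue at a closed point $x$ becomes a residue at $-x$, and since the sum ranges over all of $\overline{\bbG}$ we may relabel and remove this sign-change on the indexing set. Using that multiplication in the numerator is commutative, this gives
\[
(g,f) \;=\; (-1)^{|A|}\sum_{x \in \overline{\bbG}} \Res_x \frac{f(x_A)\, g(-x_A)}{|A|!\,\fac(-x_A)}\,\omega,
\]
and the problem is reduced to comparing $\fac(-x_A)$ with $\fac(x_A)$.

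Next I would analyze $\fac(x_A)$ piece by piece. Split it into the "diagonal" contribution (the product over $s \neq t$ in $A^i$ of the ratio of $\fl(x^{(i)}_s-x^{(i)}_t+t_1+t_2)$ and $\fl(x^{(i)}_t-x^{(i)}_s)$) and the "edge" contribution (the product over $h \in H$). For each unordered pair $\{s,t\}$ I would combine the two ordered contributions; using that $\fl$ is odd this yields an expression in $\fl(x^{(i)}_s-x^{(i)}_t)$, $\fl(x^{(i)}_s-x^{(i)}_t + t_1+t_2)$ and $\fl(x^{(i)}_s-x^{(i)}_t - t_1-t_2)$ that is manifestly even in the variable $x^{(i)}_s-x^{(i)}_t$, hence invariant under $x \mapsto -x$. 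So the diagonal part contributes nothing.

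For the edge part I would, for each $h \in H$ and pair $(s,t) \in A^{\out(h)} \times A^{\inc(h)}$, group together the factor $\fl(x^{\inc(h)}_t - x^{\out(h)}_s + m_h t_1)$ coming from $h$ with the factor $\fl(x^{\out(h)}_s - x^{\inc(h)}_t + m_{h^*}t_2)$ coming from the reversed arrow. Under $x\mapsto -x$ each factor, by oddness of $\fl$, flips the sign of its internal $m_h t_1$ or $m_{h^*} t_2$. Here the choice of $m_h$, $m_{h^*}$ in Remark~\ref{rmk:weights}(2) is essential: for the bundle of $a$ arrows $h_1,\dots,h_a$ between two fixed vertices one has $(m_{h_p},m_{h_p^*}) = (a+2-2p,-a+2p)$, which is stable (as a multiset) under the involution $(m,m')\mapsto (-m,-m')$ composed with the reindexing $p \leftrightarrow a+1-p$. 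Combining this reindexing with the two sign flips reorganizes the product over $(s,t)$ into $\pm\fac(x_A)$, with a sign depending only on $|A|$ and the quiver datum.

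Putting the three steps together gives $(g,f) = \pm (f,g)$ with the sign determined by $|A|$, which is precisely the super-symmetry. The main technical obstacle is the bookkeeping in the last step: the invariance of the edge product under $x\mapsto -x$ is not pointwise but only after a simultaneous reindexing of multiple arrows between the same pair of vertices, and it relies crucially on the symmetric weight assignment of Remark~\ref{rmk:weights}(2); with the uniform weights of Remark~\ref{rmk:weights}(1) this symmetry fails, which explains the hypothesis in the theorem statement.
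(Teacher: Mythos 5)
Your overall strategy is the same as the paper's: substitute $x_A\mapsto -x_A$, absorb the sign coming from $\omega$, and reduce everything to the identity $\fac(-x_A)=\fac(x_A)$, which the paper simply asserts to hold under the assumption of Remark~\ref{rmk:weights}(2). Your treatment of $\omega$ and of the diagonal part is fine. However, the combinatorial identity you isolate as the crux of the edge part is false as stated: the multiset of pairs $(m_{h_p},m_{h_p^*})=(a+2-2p,\,-a+2p)$ is \emph{not} stable under $(m,m')\mapsto(-m,-m')$ composed with the reindexing $p\leftrightarrow a+1-p$. That composition sends $(a+2-2p,\,-a+2p)$ to $(a-2p,\,-a-2+2p)$; comparing first components, the original multiset $\{a+2-2p\}_{p=1}^{a}=\{a,a-2,\dots,-a+2\}$ contains $a$ but not $-a$, whereas its negation contains $-a$ but not $a$, so no reindexing can realize negation-stability. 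What is true, and what the argument actually needs, is stability under the \emph{swap} $(m,m')\mapsto(m',m)$: one checks $(m_{h_{a+1-p}},m_{h_{a+1-p}^*})=(-a+2p,\,a+2-2p)=(m_{h_p^*},m_{h_p})$.

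Concretely, after $x\mapsto -x$ the factor $\fl(x_t^{\inc(h)}-x_s^{\out(h)}+m_ht_1)$ should be matched not with another $h$-factor of the original product but with an $h^*$-factor (after interchanging the roles of $s$ and $t$); this matching uses $t_1=t_2=\hbar/2$ together with the equality of multisets $\{m_{h_p}\}_p=\{m_{h_p^*}\}_p=\{-a+2,-a+4,\dots,a\}$, and it yields $\fac(-x_A)=\fac(x_A)$ on the nose, with no extra sign contributed by the edge part. With that correction your proof closes and coincides with (indeed, fills in the detail omitted from) the paper's argument; as written, the step you yourself flag as the main technical obstacle does not go through.
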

\begin{proof}
We have 
\begin{align*}
(f, g)&= \sum_{x\in \overline{\bbG}}\Res_{x } \Big( \frac{f(x_A)\cdot g(-x_A)}{ { |A|!} \fac(x_A)} \omega(x_A)\Big)\\
&= \sum_{x\in \overline{\bbG}}\Res_{x } \Big( \frac{f(-x_A)\cdot g(x_A)}{ { |A|!} \fac(x_A)} \omega(-x_A)\Big),
\end{align*}
where the last equality used the fact that $\fac(x_A)=\fac(-x_A)$ under the assumption of Remark~\ref{rmk:weights}(2). 
Note that $\omega(x_A)$ and its pullback $\omega(-x_A)$ under group inverse only differ by a sign. Therefore, the above is equal to $( g, f)$ up to a sign.\end{proof}

\begin{lemma}
The above super-symmetric pairing $( \cdot , \cdot)$ on $\SH^{\ext}_{\bbA}$ is non-degenerate. 
\end{lemma}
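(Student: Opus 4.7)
The plan is to exploit the graded structure of $(\cdot,\cdot)$ to reduce non-degeneracy on $\SH^{\sph,\ext}_{\bbA}$ to two separable problems. Since the pairing vanishes between $\SH^{\sph}_{\bbA,v}$ and $\SH^{\sph}_{\bbA,w}$ for $v\neq w$, and also between $\SH^0$ and the positive-degree part of $\SH^{\sph}_{\bbA}$, and since $\SH^{\sph,\ext}_{\bbA}\cong\SH^0\otimes\SH^{\sph}_{\bbA}$ as a $\bbN^I$-graded vector space, non-degeneracy on the whole algebra is equivalent to the conjunction of (a) non-degeneracy of the restriction to $\SH^0$ and (b) non-degeneracy of the residue pairing on each homogeneous piece $\SH^{\sph}_{\bbA,v}$ with $v\neq 0$.

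For (a), the pairing on $\SH^0$ is fully determined by the bialgebra pairing axiom, the fact that each $H_k(w)$ is grouplike under $\Delta$, and the explicit formula $(H_k(u),H_k(w))=\fac(u|w)/\fac(w|u)$. Expanding this bivariate rational function as a power series in $\fl(u),\fl(w)$ yields an infinite matrix of pairings among the natural generators of $\SH^0=\Sym_R(\bigoplus_i \mathbf{S}^{(e_i)})$. The leading-order coefficients recover a Killing-type non-degenerate pairing, and the full pairing is then non-degenerate by a triangularity argument on the order of expansion.

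For (b), suppose $0\neq f\in\SH^{\sph}_{\bbA,v}$. I would first pick $x_0\in\overline{\bbG}$ so that the local component $f_{x_0}$ is nonzero in the appropriate localization of $K^v_{x_0}$; such an $x_0$ exists by definition of the restricted product. By Lemma~\ref{lem:sph_regular}, $f_{x_0}/\fac(x_A)$ is a well-defined nonzero element of $K^v_{x_0}$. Applying Lemma~\ref{lem:nondeg_general} to this element produces $h\in K^v_{x_0}$ with $\Res_{x_0}\bigl(f_{x_0}\cdot h\cdot \omega^v/(|A|!\,\fac(x_A))\bigr)\neq 0$. By Lemma~\ref{lem:res_ord_general}, replacing $h$ by its $\fS_v$-symmetrization preserves the residue (using $\fS_v$-invariance of $f_{x_0}$ and $\fac(x_A)$ together with the sign absorbed into $\omega^v$), so we may assume $h$ is symmetric. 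It then remains to realize a corresponding test element: an adèle $g\in\SH^{\sph}_{\bbA,v}$ whose component at $-x_0$ equals $h(x_A)\cdot\fac(x_A)$ after pullback by group inversion and whose components at all other points are regular, ensuring only the residue at $x_0$ contributes to $(f,g)$ and thus $(f,g)\neq 0$.

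The hardest step is the last one: producing $g$ genuinely within the spherical subalgebra rather than in the larger $\SH_{\bbA,v}$. The spherical subalgebra is generated by $\SH^{\sph}_{\bbA,e_i}$ for $i\in I$ under the shuffle product, and by \eqref{shuffle formula} the poles of a shuffle product at $-x_0$ are concentrated along the diagonal $\fS_v\cdot(\{-x_0\}\times\overline{\bbG}^{v-1})$, matching the required support of $g$. What must be verified is a density statement: shuffle products of rank-one adèles with prescribed poles of arbitrary orders at $-x_0$ span, modulo regular functions, the symmetric part of $K^v_{-x_0}$ localized at $I_{\fac}$. This density should follow by induction on $|v|$, exhibiting explicit shuffle products realizing arbitrary principal parts at the fully diagonal point, and handling the $\fac$ factors bookkeeping in the shuffle formula carefully. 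Once the density is established, the construction of $g$ is routine and non-degeneracy on each $\SH^{\sph}_{\bbA,v}$ follows, completing the proof.
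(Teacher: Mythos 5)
Your core strategy for the positive-degree part is the same as the paper's: reduce to the local statement of Lemma~\ref{lem:nondeg_general} via Lemma~\ref{lem:res_ord_general}. However, you have manufactured an obstacle that the lemma as stated does not present, and then left that obstacle unresolved. The lemma concerns $\SH^{\ext}_{\bbA}$, and by definition the degree-$v$ piece $\SH_{\bbA,v}$ is the \emph{full} localization of the ad\`ele ring $\bbA^{(v)}$ at $I_{\fac}$ --- it is not generated under the shuffle product by the rank-one pieces. Consequently the test element $g$ may be chosen directly as any symmetric repartition supported at $-x_0$ with the prescribed principal part; there is no need for your ``density statement'' that shuffle products of rank-one ad\`eles realize arbitrary principal parts along the diagonal. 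Since you explicitly defer that density claim (``This density should follow by induction\dots Once the density is established\dots''), your proof as written is incomplete precisely at the step you identify as hardest, whereas the step is simply not needed: once $f_{x_0}/\fac(x_A)$ is a nonzero element of $K^v_{x_0}$ (Lemma~\ref{lem:sph_regular}), Lemma~\ref{lem:nondeg_general} hands you a dual element, symmetrization is harmless by Lemma~\ref{lem:res_ord_general}, and the resulting $g$ already lies in $\SH_{\bbA,v}$.

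That said, your instinct is not baseless: the enclosing theorem asserts non-degeneracy on the \emph{spherical} subalgebra $\SH^{\sph,\ext}_{\bbA}$, and there the question of whether enough test functions exist inside $\SH^{\sph}_{\bbA,v}$ is genuine (the paper's own one-line proof does not engage with it either). If you intend to prove the spherical statement, the density argument must actually be carried out, not sketched. Finally, your part (a) on $\SH^0$ is extra content beyond what the paper proves; the ``triangularity argument'' is plausible but too vague to count as a proof --- you would need to exhibit the expansion of $\fac(u|w)/\fac(w|u)$ in $\fl(u),\fl(w)$ and verify that the resulting Gram matrix on the generators of $\Sym_R(\bigoplus_i \mathbf{S}^{(e_i)})$ is invertible, which requires the leading term to be nonzero (this is where the assumption on the $m_h$ from Remark~\ref{rmk:weights}(2) and the absence of edge-loops would enter).
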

\begin{proof}
By symmetry, we need to show that if $(f, g)=0$, for any $g\in \SH^{\ext}_{\bbA}$, then $f=0$. This follows from Lemma~\ref{lem:nondeg_general}.
\end{proof}

\begin{lemma}
The above super-symmetric non-degenerate pairing $( \cdot , \cdot)$ on $\SH^{\ext}_{\bbA}$ has the property  \[(a \star b, c) = (a \otimes b, \Delta(c)), \,\ \text{for any $a, b, c\in \SH^{\ext}_{\bbA}$}.\] 
\end{lemma}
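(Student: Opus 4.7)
My plan is to verify the bialgebra compatibility by a direct residue calculation, following the template established for similar pairings in shuffle algebras. By bilinearity and by the fact that the pairing vanishes between $\SH^0$ and positively-graded $\SH$, the check decomposes into blocks: the pure-$\SH$ block (the main case), and blocks involving at least one $\SH^0$ factor. The latter reduce to the pure case either via the defining identity $(H_k(u),H_k(w))=\fac(u|w)/\fac(w|u)$ together with $\Delta(H_k(w))=H_k(w)\otimes H_k(w)$, or, when $a$ or $b$ carries an $\SH^0$ prefactor, by commuting that prefactor through using the conjugation action \eqref{eqn:conj} and then invoking Theorem~\ref{thm:coprod}\eqref{item1}. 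I would relegate these two small verifications to the beginning of the argument so that the main case can be treated cleanly.

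For the main case, take $a\in\SH^{\sph}_{\bbA,v_1}$, $b\in\SH^{\sph}_{\bbA,v_2}$ and $c\in\SH^{\sph}_{\bbA,v}$ with $v=v_1+v_2$. I would expand the left hand side using \eqref{shuffle formula} and the definition of the residue pairing, arriving at a sum over $(A,B)\vdash v$ of residues of
\[
(-1)^{(v_2,\overline C v_1)}\,\frac{a(x_A)\,b(x_B)\,\fac(x_A|x_B)\,c(-x_{A\cup B})}{|v|!\,\fac(x_{A\cup B})}\,\omega^v.
\]
For the right hand side, I would use \eqref{eq:coprod} and observe that, because the pairing kills mixed $\SH^0$-$\SH$ tensors, only the $H_k\mapsto 1$ constant term of $\Delta(c)$ contributes, giving a sum indexed by decompositions $v=v_1+v_2$ of residues on $\bbG^{(v_1)}\times\bbG^{(v_2)}$ of
\[
(-1)^{(v_2,\overline C v_1)}\,\frac{a(-x_{[1,v_1]})\,b(-x_{[v_1+1,v]})\,c(x)}{|v_1|!\,|v_2|!\,\fac(x_{[1,v_1]})\,\fac(x_{[v_1+1,v]})\,\fac(x_{[v_1+1,v]}|x_{[1,v_1]})}\,\omega^{v_1}\wedge\omega^{v_2}.
\]
The bridge between the two expressions is the factorization identity
\[
\fac(x_{A\cup B})=\fac(x_A)\cdot\fac(x_B)\cdot\fac(x_A|x_B)\cdot\fac(x_B|x_A),
\]
which is immediate from the definition of $\fac$. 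Substituting this on the left kills the $\fac(x_A|x_B)$ in the numerator and produces exactly the denominator structure appearing on the right. Next I would invoke Lemma~\ref{lem:res_ord_general} together with the $\fS_v$-symmetry of the integrand (after summing) to collapse the sum over the $\binom{|v|}{|v_1|}$ partitions into a single standard-partition term, with the combinatorial factor canceling the $|v|!$ in the denominator against $|v_1|!\,|v_2|!$. Finally, the change of variables $x\mapsto -x$ (under which $\fac$ is invariant thanks to the choice in Remark~\ref{rmk:weights}(2), and $\omega$ changes only by a sign absorbed in the super-symmetry of the pairing) matches the two sides.

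The main obstacle I anticipate is the careful identification of the residue on $\bbG^{(v)}$ with the product residue on $\bbG^{(v_1)}\times\bbG^{(v_2)}$: these are a priori defined on domains of different ``diagonal'' types. The key point is that both residues are evaluated at a common diagonal point $(x,\ldots,x)$ and the form $\omega^v$ pulls back along the natural concatenation map $\bbG^{(v_1)}\times\bbG^{(v_2)}\to\bbG^{(v)}$ to $\omega^{v_1}\wedge\omega^{v_2}$ up to sign, so the residues agree once one verifies that the integrand extends appropriately across the diagonal. Lemma~\ref{lem:sph_regular} guarantees that the combined integrand is regular outside the vanishing locus of $\fac(x_B|x_A)$, which is precisely the locus where the relevant residue is supported. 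Beyond this, the calculation is bookkeeping of $\fac$-factors; the chief conceptual subtlety is that the $\fac(x_A|x_B)$ of the shuffle product and the $\fac(x_B|x_A)^{-1}$ of the coproduct are dual sides of the same factorization of $\fac(x_{A\cup B})$, which is the reason the pairing is compatible in the first place.
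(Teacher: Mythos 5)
Your overall architecture matches the paper's: expand both sides as residues, use Lemma~\ref{lem:res_ord_general} to collapse the sum over $\bfP(v_1,v_2)$ onto the standard partition (with the factorials working out as you say), and exploit the factorization $\fac(x_{A\cup B})=\fac(x_A)\fac(x_B)\fac(x_A|x_B)\fac(x_B|x_A)$. However, there is a genuine gap in your treatment of the Cartan factor $H_{A_o}(x_{B_o})$ in $\Delta(c)$. You discard it by "setting $H_k\mapsto 1$", reasoning that the pairing kills mixed $\SH^0$--$\SH$ tensors. But $H_{A_o}(x_{B_o})$ depends jointly on the Cartan variables of the \emph{first} tensor factor and the $x$-variables of the \emph{second}, and it sits to the \emph{left} of $P(x_{A_o}\otimes x_{B_o})$ inside the smash product $\SH^0\ltimes\SH$. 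Before it can be paired against $1$ it must be normal-ordered past $P$, and by \eqref{eqn:conj} this produces the factor $\widehat{\Phi}(x_{B_o}|x_{A_o})=\fac(x_{B_o}|x_{A_o})/\fac(x_{A_o}|x_{B_o})$, which converts the coproduct denominator $\fac(x_{B_o}|x_{A_o})$ into $\fac(x_{A_o}|x_{B_o})$. This is exactly the step the paper's proof hinges on, and it is absent from yours.

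The omission does not wash out in your final change of variables; in fact it is compounded by a second inaccuracy there. Under Remark~\ref{rmk:weights}(2) one has $\fac(-x_A)=\fac(x_A)$ for the diagonal factor, but for the off-diagonal factor negation \emph{swaps the blocks}: $\fac(-x_{A}|-x_{B})=\fac(x_{B}|x_{A})$, not $\fac(x_A|x_B)$. Tracking this correctly, your right-hand side ends up with $\fac(x_{A_o}|x_{B_o})$ in the denominator where the (collapsed, factorized) left-hand side has $\fac(x_{B_o}|x_{A_o})$, so the two sides as you have written them differ by the nontrivial factor $\widehat{\Phi}(x_{B_o}|x_{A_o})$ and do not match. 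The fix is precisely the paper's identity $H_{A_o}(x_{B_o})P(x_{A_o}\otimes x_{B_o})=P(x_{A_o}\otimes x_{B_o})H_{A_o}(x_{B_o})\widehat{\Phi}(x_{B_o}|x_{A_o})$, applied before invoking $(1,H_k(w))=1$: the resulting $\widehat\Phi$ and the block-swap under negation cancel exactly, which is the real reason the pairing is compatible with $(\star,\Delta)$. Your remaining points (the $\SH^0$ blocks, the combinatorics of the partition sum, the identification of residues on $\bbG^{(v)}$ versus $\bbG^{(v_1)}\times\bbG^{(v_2)}$, and the role of Lemma~\ref{lem:sph_regular}) are all sound.
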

\begin{proof}
Let $v_1, v_2\in \N^I$ be two dimension vectors with $v=v_1+v_2$. 
We need to show  $ (f_1\star f_2, P)=(f_1\otimes f_2, \Delta(P))$, for $f_1\in \SH_{\bbA, v_1}, f_2\in \SH_{\bbA, v_2}, P\in \SH_{\bbA, v}$, since $(\SH_{\bbA, v}, \SH_{\bbA, w})=0$, if $v\neq w$. 
Let $(A_o,B_o)=([1,v_1],[v_1+1,v])$ be the standard element in $\bfP(v_1,v_2)$. 
By definition, it suffices to show that
\begin{equation}\label{eq:proof bialgebra}
\sum_{(A, B)\in\bfP(v_1,v_2)}\Big(f_{1}(x_A) \cdot f_2(x_B) \cdot \fac(x_A|x_B), P\Big)
=\Big(f_1(x_{A_o})\otimes f_2(x_{B_o}),  \frac{ H_{A_o}(x_{B_o}) P(x_{A_o}\otimes x_{B_o})}{\fac(x_{B_o}| x_{A_o})}\Big).
\end{equation}
Using Lemma~\ref{lem:res_ord_general}, the left hand side of \eqref{eq:proof bialgebra} is the same as
\begin{align*}
&\sum_{x\in \overline{\bbG}}\Res_{x}\sum_{\{(A, B)\}}\Big(\frac{f_{1}(x_A) \cdot f_2(x_B) \cdot \fac(x_A|x_B)\cdot P(-x_A, -x_B)}{ { |A\cup B|!} \fac(x_{A\cup B}) }\Big) w_{A\cup B}\\
=&
\sum_{x\in \overline{\bbG}}\Res_{x}\Big(\frac{f_{1}(x_{A_o}) \cdot f_2(x_{B_o}) \cdot \fac(x_{A_o}|x_{B_o})\cdot P(-x_{A_o}, -x_{B_o})}{ |{A_o}|! |{B_o}|! \fac(x_{A_o\cup B_o})}\Big) w_{A_o\cup B_o}
\end{align*}
Using the equality
$ H_{A_o}(x_{B_o})P(x_{A_o}\otimes x_{B_o}) =P(x_{A_o}\otimes x_{B_o}) H_{A_o}(x_{B_o})  \widehat{\Phi}(x_{B_o}|x_{A_o})$, 
the right hand side of \eqref{eq:proof bialgebra} is the same as
\begin{align*}
&\Big(f_1(x_{A_o})\otimes f_2(x_{B_o}),  \frac{ P(x_{A_o}\otimes x_{B_o}) H_{A_o}(x_{B_o}) }{\fac(x_{A_o}| x_{B_o})}\Big)\\
=&\sum_{x\in \overline{\bbG}}\Res_{x} 
\frac{f_{1}(x_{A_o})}{ |{A_o}|!  \fac (x_{A_o}) } \cdot  \frac{f_{2}(x_{B_o})}{ |{B_o}|!  \fac (x_{B_o}) } \cdot \frac{ P(-x_{A_o}, -x_{B_o})}{ \fac(-x_{A_o}| -x_{B_o}) } )
w_{A_o}w_{B_o}\\
=&
\sum_{x\in \overline{\bbG}}\Res_{x}\Big(\frac{f_{1}(x_{A_o}) \cdot f_2(x_{B_o}) \cdot \fac(x_{A_o}|x_{B_o})\cdot P(-x_{A_o}, -x_{B_o}) }{ |{A_o}|! |{B_o}|!  \fac (x_{A_o\cup B_o}) }\Big) w_{{A_o}\cup {B_o}},
\end{align*}
Here we used the fact that $(1, H_k(w))=1$ and $\fac(-x_{A_o}| -x_{B_o})=\fac(x_{B_o}| x_{A_o})$ under assumption of Remark~\ref{rmk:weights}(2). Therefore, the equality \eqref{eq:proof bialgebra} holds. 
This completes the proof. 
\end{proof}

\subsection{Quantization of the Manin triples}
Using the non-degenerate bilinear pairing $(\cdot, \cdot)$, we form the Drinfeld double of the bialgebra $\SH^{\sph,\ext}_{\bbA}$, denoted by $D(\SH^{\sph,\ext}_{\bbA})$. Again following the idea of Drinfeld, we consider a subalgebra in the Drinfeld double $D(\SH^{\sph,\ext}_{\bbA})$.

Assume $\Lambda\subset \bbA$ is an isotropic $\bbC$-subring such that $\bbA\cong F\oplus \Lambda$. 
Let $S\subset \overline{\bbG}$ be a non-empty finite subset. Let $\bbA_S$ be the ring of  repartitions without $x$-component for $x\in S$. Let $F_S$ be the subring 
\[\{a\in F\mid \im(a)\in \bbA_S\hbox{ lies in }\im(\Lambda)\subset \bbA_S.\}\] For each $i\in I$, let $\SH_{e_i,S}\subseteq \SH_{e_i,\bbA}$ be the image of $F_S$ under the isomorphism $\SH_{e_i,\bbA}=\bbA$.

We consider the subalgebra of $D(\SH^{\sph,\ext}_{\bbA})$, generated by $\SH_{e_i,S}$,  $\SH_{e_i,S}^{\coop}$ for $i\in I$, and $\SH^0\otimes(\SH^{0})^\coop$. This subalgebra is denoted by $D(\SH^{\sph,\ext}_{S})$.
\begin{lemma}
The comultiplication $\Delta:D(\SH^{\sph,\ext}_{S})\to D(\SH^{\sph,\ext}_{S})\widehat{\otimes} D(\SH^{\sph,\ext}_{S})$ is well-defined. In particular,  $D(\SH^{\sph,\ext}_{S})$ is a bialgebra.
\end{lemma}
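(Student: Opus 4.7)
The plan is to reduce the well-definedness of $\Delta$ to a check on a generating set. Since $\Delta$ on the ambient Drinfeld double $D(\SH^{\sph,\ext}_{\bbA})$ is an algebra homomorphism (this is the content of the bialgebra structure on the Drinfeld double, built from the non-degenerate pairing of \S\ref{subsec:paring_on_adele} together with Theorem~\ref{thm:coprod}\eqref{item1}), once $\Delta$ is shown to carry each generator of $D(\SH^{\sph,\ext}_{S})$ into $D(\SH^{\sph,\ext}_{S})\widehat{\otimes} D(\SH^{\sph,\ext}_{S})$, multiplicativity automatically propagates the statement to the whole subalgebra. Thus the plan has two steps: first check the generators; then check that forming products of generators, including the mixed $A^-\star A^+$ products enforced by the Drinfeld double relation \eqref{eq: commut rel}, does not escape the subalgebra.

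The generators are of three types: $\SH_{e_i,S}$, its opposite copy $\SH_{e_i,S}^{\coop}$, and $\SH^0\otimes(\SH^0)^{\coop}$. For $f\in\SH_{e_i,S}$, formula \eqref{eq:delta f} reads $\Delta(f)=H_i(x^{(i)})\otimes f+f\otimes 1$, where $H_i(x^{(i)})$ is understood via the expansion of $H_i(w)=1+(\fl(h^{(i)})-\fl(w))^{-1}$ in powers of $\fl(w)$, exactly as in \S\ref{sec:coprod}. Since $\SH^0$ is one of the generating pieces and $f\in\SH_{e_i,S}\subset D(\SH^{\sph,\ext}_{S})$, every term lies in $D(\SH^{\sph,\ext}_{S})\otimes D(\SH^{\sph,\ext}_{S})$ and the series converges in the completion defined in \eqref{eqn:completion}. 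The generators in $\SH_{e_i,S}^{\coop}$ are handled by the same argument applied to the opposite coproduct, while for $H_k(w)\in\SH^0$ equation \eqref{coprodH} gives $\Delta(H_k(w))=H_k(w)\otimes H_k(w)\in\SH^0\widehat{\otimes}\SH^0$, and similarly for $(\SH^0)^{\coop}$.

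Next I would verify closure under products. For two monomials $a,b$ in the generators, multiplicativity yields $\Delta(a\star b)=\Delta(a)\star\Delta(b)$, so the issue is only whether $\Delta(a)\star\Delta(b)$ is well-defined and lies in $D(\SH^{\sph,\ext}_{S})\widehat{\otimes} D(\SH^{\sph,\ext}_{S})$. The single nontrivial case is a cross-product $a^-\star b^+$ with $a\in\SH_{e_i,S}$ and $b\in\SH_{e_j,S}$: by \eqref{eq: commut rel} such a product can be rewritten as $\sum b^+_1\star a^-_2\cdot(b_2,a_1)\cdot\bigl(\sum a^-_1\star b^+_2\cdot(a_2,b_1)\bigr)^{-1}$-type rearrangements, where the scalars $(b_2,a_1), (a_2,b_1)\in\bbC$ come from the residue pairing and the remaining factors are products of generators, hence still in $D(\SH^{\sph,\ext}_{S})$. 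Coassociativity and the counit property of $\Delta$ on the subalgebra are then inherited by restriction from Theorem~\ref{thm:coprod}.

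The main obstacle will be the bookkeeping of the completion. Concretely, one has to confirm that when the $\Delta$ of two generators are multiplied via the Drinfeld double product, the infinite sums produced by the expansions of the various $H_k(w)$'s (both from $\Delta(f)$ and from the cross-commutation \eqref{eq: commut rel}) continue to converge in the topology used to define $\widehat{\otimes}$. This will ultimately reduce to the convergence of the expansion of $H_k(w)$ in powers of $\fl(w)$ illustrated in Example~\ref{ex:completion}, but it has to be done carefully for the mixed products because the pairing $(\cdot,\cdot)$ itself is defined through residues and could, in principle, introduce poles at $\fl(w)=0$; showing that it does not is the technical heart of the lemma.
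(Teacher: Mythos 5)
Your proposal takes essentially the same route as the paper: reduce to generators via the multiplicativity of $\Delta$, then verify each type of generator directly from \eqref{coprodH} and \eqref{eq:delta f}. The extra discussion of cross-products $a^-\star b^+$ is unnecessary for this lemma (the subalgebra is closed under products by definition, and multiplicativity of $\Delta$ on the ambient double does the rest), and the displayed ``rearrangement'' with an inverted sum is not what \eqref{eq: commut rel} gives, but this does not affect the core argument, which is correct.
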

\begin{proof}
As the comultiplication is an algebra homomorphism, it suffices to show that the generators of $D(\SH^{\sph,\ext}_{S})$ are closed under the comultiplication. By \eqref{coprodH} it is clear that $\SH^0$ is closed under $\Delta$; and similar for $(\SH^{0})^\coop$. For $f(x^{(k)})\in \SH_{e_k,S}$, $\Delta(f(x^{(k)}))$ can be calculated using \eqref{eq:delta f}. In particular, the right hand side of \eqref{eq:delta f} is an element in $\SH^0\widehat{\otimes}\SH_{e_k,S}$ assuming $f(x^{(k)})\in \SH_{e_k,S}$. Similarly $\Delta(f(x^{(k)}))$ is in $(\SH_{e_k,S})^{\coop}\widehat{\otimes}(\SH^0)^{\coop}$ if $f(x^{(k)})\in (\SH_{e_k,S})^{\coop}$. Hence, we are done.
\end{proof}
 Let $\SH^{\sph,\ext}_{S}$ be the subalgebra of $\SH^{\sph,\ext}_{\bbA}$ generated by $\SH_{e_i,S}$ and $\SH^0$. Clearly $\SH^{\sph,\ext}_{S}$ is a sub-bialgebra of $D(\SH^{\sph,\ext}_{S})$. Similarly we have a sub-bialgebra $(\SH^{\sph,\ext}_{S})^\coop$. As a vector space, we have \[D(\SH^{\sph,\ext}_{S})\cong \SH^{\sph,\ext}_{S}\otimes (\SH^{\sph,\ext}_{S})^\coop.\]

One example of $D(\SH^{\sph,\ext}_{S})$, the case $\bbG=\bbG_a$, is studied in \S~\ref{sec;Yangian}, where we describe a relation  with the Yangian. From \cite{Gr2}, it is expected that when $\bbG=\bbG_m$, $\omega=\frac{1}{x}dx$ with $x$ being the natural coordinate of $\bbA^1\supset\bbG_m$, and $S\subseteq\bbP^1$ consists of $0,\infty$, the algebra $D(\SH^{\sph,\ext}_{S})$ should be related to the quantum loop algebra. However, the proof of this statement involves calculation of non-standard generators of the quantum loop algebra, we postpone it to later investigation.

\begin{example}\label{rmk:connectiveK}
It is equally interesting to consider examples when $\bbG$ is beyond the additive, multiplicative, and elliptic ones. For example, let $R=\bbQ[\beta]$, and $F(u,v)=u+v-\beta uv$ be a formal group law over $R$. An OCT with this formal group law is  the connective $K$-theory \cite{DL}, denoted by $CK$. Let $\calP^{\sph,\ext}(Q,CK)$ be the spherical extended preprojective CoHA with $A=CK$. Then,  $D(\underline\calP^{\sph,\ext}(Q,CK))$ can be considered as a family of algebras over $\Spec R=\bbA^1$, whose generic fiber is $U_q(L\fg_Q)$, and the special fiber is $Y_\hbar(\fg_Q)$. This recovers a classical theorem of Drinfeld that  the quantum loop algebra degenerates to the Yangian.

There is a hyperbolic formal group law $(F(v,u)=\frac{v+u-\mu_1 uv}{1+\mu_2uv},\bbZ[\mu_1^\pm,\mu_2^\pm])$ studied in \cite{LZ}. This formal group law comes from generic singular locus of the Weierstrass elliptic curve. Generically, this formal group law is multiplicative, and at the special fiber the formal group law is additive. Therefore, let $A$ be the OCT with this formal group law, then $\underline\calP^{\sph,\ext}(Q,A)$ should provide another degeneration of $U_q^{\geq 0}(L\fg_Q)$ to $Y^{\geq0}_q(\fg_Q)$ which is different  than the one constructed by Drinfeld.

When $A=\Omega$ is the algebraic cobordism of \cite{LM}, the formal group law is the universal formal group law over the Lazard ring $\bbL$.
We have $\calP^{\sph,\ext}(Q, \Omega)$ acts  on $^\Omega\calM(w):=\Omega_{G_w}(\mathfrak{M}(w))$, the cobordism of the Nakajima quiver variety $\mathfrak{M}(w)$. The eigenvalues of  $\calP^0(Q, \Omega)$ on $^\Omega\calM(w)$ are given by Chern numbers of smooth projective varieties.
We expect this observation to be of geometric applications.

Another potentially interesting OCT is the Morava $K$-theory, whose formal group law is the Lubin-Tate formal group law. However, as the coefficient ring of the Lubin-Tate formal group law has positive characteristic, we will not study this example in the present paper. 
\end{example}

\section{Yangian as a Drinfeld double}\label{sec;Yangian}
In this section we assume the group $\bbG$ is the additive group $\bbG_a$. We assume $Q$ has no edge-loops, and for each $h\in H$ the numbers $m_h$ and $m_{h^*}$ are as in Remark~\ref{rmk:weights}(2).
In this case, we recover the Yangian using the construction of the Drinfeld double of the cohomological Hall algebra.

\subsection{The Yangian}
\label{symmetricYangian} 
The smooth compactification of $\bbG_a$ is $\PP^1$. Let $\fl=x$ be the natural coordinate function of $\bbG_a$. The meromorphic section $\omega$ can be taken as $\omega=dx$, which has an order $2$ pole at $\infty\in \bbG_a$. 
Let $S=\{\infty\}\subset \PP^1$, and  $\Lambda=(\fm_{\infty} \times \prod_{x\in X\backslash S} \calO_x)\subset \bbA$, where $\fm_\infty$ is the maximal ideal in $\calO_\infty$.  
Then, $F_S\cong \bbC[x]$, the ring of regular functions on $\bbG_a$. Hence, $\SH_S^{\sph,\ext}=\SH^{\sph,\ext}$, and the Drinfeld double $D(\SH_S^{\sph,\ext})$ is $\SH^{\sph,\ext}\otimes (\SH^{\sph,\ext})^{\coop}$ endowed with a suitable bialgebra structure.

Define the \textit{reduced Drinfeld double} $\overline D(\SH^{\ext,\sph}_S)$ (see e.g. \cite[2.4]{X})  to be $ D(\SH^{\ext,\sph}_S)$ with the following additional relation imposed
\[H_k^+(u)=H_k^-(-u), \,\ \text{for any $k\in I$}.\] 

Recall that Yangian for a finite dimensional Lie algebra is the quantization of a Manin triple
\cite[\S~4]{D86}.
Fix a Lie algebra $\fa$($\dim \fa<\infty$) and an invariant scalar product on it. Set
\[
\fp=\fa(u^{-1}), \,\  \fp_1=\fa[u], \,\ \fp_2=u^{-1}\fa[u^{-1}]
\] and define the scalar product $\fp$ by 
\[
(f, g):=\Res_{u=\infty} (f(u), g(-u)) du.
\]
The Manin triple $(\fp, \fp_1, \fp_2)$ defines a Lie bialgebra structure on $\fa[u]$. 
The cocommutator is given by 
\[
a(u) \mapsto [a(u)\otimes 1+ 1\otimes a(u), \frac{t}{u-v} ]
\]
where $t$ is the Casimir element.
Identify $\Spec\bbC[u]$ with $\bbG=\bbG_a$, then $\fp_1$ can be alternatively described as the following sub-bialgebra of $\fa\otimes_\bbC F$
\[
G_{S}=\{a\in \fa\otimes_\bbC F\mid \text{the image of a in $ \fa\otimes_\bbC \bbA_{S}$ belongs to the image of $\fa\otimes_\bbC \Lambda$ in $\fa\otimes_\bbC \bbA_{S}$}\},
\]
where $\bbA_{S}$ is the ring of ad\`ele without $\infty$--components. 
The double Yangian, defined as the Drinfeld double of the Yangian, is a quantization of $\fp$.

There is an explicit Drinfeld-type presentation of the Yangian, which applies to any Kac-Moody Lie algebra.
Let $\fg_Q$ be the symmetric Kac-Moody Lie algebra associated to the quiver $Q$.
The  Cartan matrix of $\fg_Q$ is
$(c_{kl})_{k, l\in I}$, which is a symmetric matrix.
Recall that the Yangian of $\fg_Q$, denoted by  $Y_\hbar(\fg_Q)$, is an associative algebra over $\C[\hbar]$, generated by the variables
\[
x_{k, r}^{\pm}, \xi_{k, r}, (k\in I, r\in \N),
\]
subject to relations described below. 
Take the generating series $\xi_{k}(u), x_k^{\pm}(u)\in Y_{\hbar}(\fg_Q)[\![u^{-1}]\!]$ by
\[
\xi_{k}(u)=1+\hbar \sum_{r\geq 0} \xi_{k, r} u^{-r-1}\,\ \text{and}\,\ 
x_k^{\pm}(u)=\hbar \sum_{r\geq 0} x_{k, r}^{\pm} u^{-r-1}. \]
The following is a complete set of relations defining $Y_\hbar(\fg_Q)$ (see, e.g.,  \cite[\S~3.4]{GTL14}):
\begin{description}
\item[(Y1)] For any $i, j\in I$, and $h, h'\in \fh$
\[
[\xi_{i}(u), \xi_{i}(v)]=0, \,\  [\xi_{i}(u), h]=0,\,\  [h, h']=0
\]
\item[(Y2)] For any $i\in I$, and $h \in \fh$,
 \[[h, x_{i}^{\pm}(u)]=\pm \alpha_{i}(h) x_{i}^{\pm}(u)\]
 \item[(Y3)] For any $i, j\in I$, and $a=\frac{\hbar c_{ij}}{2}$
 \[
 (u-v\mp a) \xi_i(u)x_j^{\pm}(v)
=(u-v\pm a) x_j^{\pm}(v) \xi_i(v) \mp 2a x_{j}^{\pm}(u\mp a)\xi_{i}(u)
 \]
 \item[(Y4)] For any $i, j\in I$, and $a=\frac{\hbar c_{ij}}{2}$
 \[
 (u-v\mp a) x_i^{\pm}(u)x_j^{\pm}(v)
=(u-v\pm a) x_j^{\pm}(v) x_i^{\pm}(u)+ \hbar
\Big( [x_{i,0}^{\pm}, x_j^{\pm}(v)]-[x_i^{\pm}(u), x_{j, 0}^{\pm}]\Big) \]
 \item[(Y5)] For any $i, j\in I$, 
 \[
 (u-v)[x^+_i (u),x^-_j (v)]=-\delta_{ij} \hbar(\xi_i(u)-\xi_i(v))
 \]
 \item[(Y6)] For any $i\neq j\in I$, 
 \[
 \sum_{\sigma\in\fS_{1-c_{ij}}}[x_i^{\pm}(u_{\sigma(1)}),[x_i^{\pm}(u_{\sigma(2)}),[\cdots,[x_i^{\pm}(u_{\sigma (1-c_{ij})}),x_j^{\pm}(v)]\cdots]]]=0. \]
\end{description}

\subsection{Relation with the shuffle algebra}
It is shown in \cite[Theorem D]{YZ1} that there is an algebra epimorphism 
 \[Y^{\geq 0}_\hbar(\fg) \surj \SH^{\sph,\ext}|_{t_1=t_2=\frac{\hbar}{2}}, \]
given by $x_{k,r} \mapsto (x^{(k)})^r\in \SH_{e_k}$ for any $k\in I$ and $r\in \bbN$, and $\xi_{k,r}\mapsto (h^{(k)})^r \in \mathbf{S}^{e_k} \subseteq \SH^0$. 
This morphism is compatible with the action of $\SH^{\sph,\ext}$ on cohomology of quiver varieties \cite[Theorem B]{YZ1} and the action of $Y_\hbar(\fg)$  \cite[Theorem~4]{Va00}. Furthermore, this morphism is an isomorphism when $Q$ is of finite type. 

\begin{prop}\label{prop:Yangian coproduct}
Under this epimorphism, the comultiplication $\Delta$ defined in \S~\ref{sec:coprod} agrees with the Drinfeld coproduct of the Yangian.
\end{prop}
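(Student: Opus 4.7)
The plan is to reduce the statement to a direct comparison of generating-series formulas and then invoke that both comultiplications are algebra homomorphisms, so that agreement on generators suffices. Let $\pi:Y^{\geq 0}_\hbar(\fg_Q)\twoheadrightarrow \SH^{\sph,\ext}|_{t_1=t_2=\hbar/2}$ denote the epimorphism recalled just before the proposition. Since $\Delta$ is an algebra map by Theorem~\ref{thm:coprod}\eqref{item1} and $\Delta_D$ is one by construction, and since $Y^{\geq 0}_\hbar(\fg_Q)$ is generated by the coefficients of $\xi_k(u)$ and $x_k^+(u)$ for $k\in I$, it will be enough to verify $(\pi\otimes\pi)\circ\Delta_D(g)=\Delta(\pi(g))$ on these generating series.

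I would start by recalling Drinfeld's coproduct on $Y_\hbar(\fg_Q)$ and observing that, when restricted to $Y^{\geq 0}_\hbar(\fg_Q)$, the terms involving $x^-$-generators drop out and the surviving contributions reorganise (in a suitable completion) into the group-like formulas
\[
\Delta_D(\xi_k(u))=\xi_k(u)\otimes \xi_k(u),\qquad \Delta_D(x_k^+(u))=x_k^+(u)\otimes 1+\xi_k(u)\otimes x_k^+(u).
\]
On the shuffle side, \eqref{coprodH} reads $\Delta(H_k(w))=H_k(w)\otimes H_k(w)$, which matches $\Delta_D(\xi_k(u))$ once $\pi(\xi_k(u))$ is identified with $H_k(u)$ via the common action on Nakajima quiver varieties that was used in \cite{YZ1} to produce $\pi$. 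For the raising generators, specialising \eqref{eq:delta f} to $f(x^{(k)})=(x^{(k)})^r=\pi(x_{k,r}^+)$ yields
\[
\Delta\bigl((x^{(k)})^r\bigr)=H_k(x^{(k)})\otimes (x^{(k)})^r+(x^{(k)})^r\otimes 1,
\]
which, once assembled into the generating series $\pi(x_k^+(u))$ with $u$ playing the role of $x^{(k)}$, is precisely the image of $\Delta_D(x_k^+(u))$. The multiplicative extension then takes care of all remaining elements.

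The main obstacle will be reconciling the expansion conventions on the two sides. The Drinfeld series $\xi_k(u),x_k^+(u)$ are formal power series in $u^{-1}$, whereas the shuffle completion in~\eqref{eqn:completion} is built from powers of $\fl(x^{(k)})$, i.e.\ expansion near the origin. Choosing a consistent identification, spectral parameter $u$ versus the coordinate $x^{(k)}$ expanded at the appropriate point of $\PP^1$, and tracking the sign $(-1)^{(v_2,\overline{C}v_1)}$ together with the localisation at $\fac$ that appears in the coproduct formulas, is the only nontrivial bookkeeping; once that is in place, the two direct matchings above close the argument.
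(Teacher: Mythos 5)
Your overall strategy---reduce to the generating series $\xi_k(u)$, $x_k^+(u)$ using that both coproducts are algebra homomorphisms, then match formulas on these generators---is the same as the paper's, and the check for $\xi_k(u)$ against \eqref{coprodH} is fine. The gap is in the formula you assert for $\Delta_D(x_k^+(u))$ and in your claim that reconciling it with \eqref{eq:delta f} is only expansion bookkeeping. You take $\Delta_D(x_k^+(u))=x_k^+(u)\otimes 1+\xi_k(u)\otimes x_k^+(u)$ with the Cartan current at the spectral parameter $u$. Under $\pi\otimes\pi$ the coefficient of $u^{-n-1}$ in $\xi_k(u)\otimes x_k^+(u)$ is the finite sum $\hbar\, 1\otimes (x^{(k)})^{n}+\hbar^2\sum_{r+s=n-1}(h^{(k)})^{r}\otimes (x^{(k)})^{s}$, an honest element of $\SH^0\otimes\SH_{e_k}$. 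By contrast, \eqref{eq:delta f} gives $\Delta\bigl((x^{(k)})^{n}\bigr)=H_k(x^{(k)})\otimes (x^{(k)})^{n}+(x^{(k)})^{n}\otimes 1$, where $H_k$ is evaluated at the variable $x^{(k)}$ of the \emph{second} tensor factor, not at $u$; its expansion (Example~\ref{ex:completion}) involves negative powers of $h^{(k)}$ and lies only in the completion \eqref{eqn:completion}. So the two candidate answers cannot agree coefficient-by-coefficient under any choice of expansion convention: as rational functions they differ by the contribution of the pole of the Cartan current $H_k(v)$ at $v=h^{(k)}$. This is not a sign or localisation issue to be "tracked"; with your stated formula the matching for $x_k^+(u)$ simply fails.

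What is missing is that for the Yangian the naive product $\xi_k(u)\otimes x_k^+(u)$ is \emph{not} the Drinfeld coproduct (unlike the quantum loop algebra case, where the Cartan half-current has only one-sided modes and the formal delta-function calculus makes the analogous product come out right). The definition the proposition refers to is the contour-integral formula of \cite[4.5]{GTL14} at $s=0$, namely
\[
\Delta_D\bigl(x_k^+(u)\bigr)=x_k^+(u)\otimes 1+\oint_C\frac{1}{u-v}\,\xi_k(v)\otimes x_k^+(v)\,dv,
\]
where $C$ encircles only the pole $v=x^{(k)}$ of $x_k^+(v)$ in the second factor (and neither $v=u$ nor the pole of $\xi_k(v)$ in the first factor). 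Cauchy's theorem then evaluates the integral to $H_k(x^{(k)})\otimes\frac{\hbar}{u-x^{(k)}}$, which is exactly the first term of \eqref{eq:delta f}. That residue computation is the actual content of the paper's proof; you need to replace your displayed formula for $\Delta_D(x_k^+(u))$ by the contour version and carry out this evaluation for the argument to close.
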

\begin{proof}
This follows from comparing the comultiplication formula in \S~\ref{sec:coprod} and the formula of Drinfeld coproduct of the Yangian \cite[4.5]{GTL14} (see also \cite{Her} for the case of quantum loop algebra). 

Indeed, the Drinfeld coproduct of \cite[4.5]{GTL14}, when  $s=0$, applied to $\xi_k(u)$ yields $\xi_k(u)\otimes\xi_k(u)$, which matches up with \eqref{coprodH}. 
Similarly, apply  \cite[4.5]{GTL14}, again when  $s=0$, to $x^+_k(u)=\frac{\hbar}{u-x^{(k)}}$ yields 
\[x^+_k(u)\otimes1+\oint_C\frac{1}{u-v}H_k(v)\otimes(\frac{\hbar}{v-x^{(k)}})dv\] 
where $C$ is a contour near the solo pole $v=x^{(k)}$ of $x^+_k(v)$.  By Cauchy's integral theorem, $\oint_C\frac{1}{u-v}H_k(v)\otimes(\frac{\hbar}{v-x^{(k)}})dv=H_k(x^{(k)})\otimes(\frac{\hbar}{u-x^{(k)}})$. Hence the Drinfeld coproduct applied to $x^+_k(u)$ coincides  with  \eqref{eq:delta f}.
\end{proof}
\begin{remark}
It is well-known that the Drinfeld coproduct on Yangian corresponds to a meromorphic tensor structure whose poles can not be eliminated. On the level of shuffle algebra, this fact is reflected in the poles of  the function $H$ in the right hand side of formula  \eqref{eq:coprod}. Passing to the completion as in \eqref{eqn:completion}, this is reflected by the existence of negative powers as in Example~\ref{ex:completion} which are not well-defined elements in the Yangian.
\end{remark}
In \cite[Theorem~3.20]{Nak12}, the tensor structure on Yangian representations, in terms of the cohomology of quiver varieties, associated to Drinfeld comultiplication has been studied. In other words, the map from $\overline D(\SH^{\sph,\ext})$ to the convolution algebra of the Steinberg variety \cite[\S 5.3]{YZ1} is compatible with the Drinfeld comultiplications on both sides.

Let $f(x^{(k)})$ and $g(x^{(k)})$ be any elements in $\SH_{e_k}$. We 
denote $\sum_{i\in \bbN}(h^{(k)})^i((x^{(k)})^{-i-1}\cdot f,g)$ by $H_k(x^{(k)})(f,g)$ 
then 
we have
\[
H_k(x^{(k)})(f(x^{(k)}),g(x^{(k)}))=\Res_{x^{(k)}=\infty}(H_k(x^{(k)})\cdot f(x^{(k)})\cdot g(-x^{(k)})\cdot dx^{(k)}).\]

Let $E_k(u):=\hbar\sum_{r\geq 0} (x^{(k)})^{r} u^{-r-1} \in \SH_{e_k}[\![u^{-1}]\!]$ be the standard generating series. Note that it is the expansion of the rational function $\frac{\hbar}{u-x^{(k)}}$ on $\bbG_\alpha\times\bbG$ around $u=\infty$. We denote the element $-E_k(-u)^-\in \SH_{e_k}^\coop [\![u^{-1}]\!] \subseteq D(\SH^{\sph,\ext})[\![u^{-1}]\!]$  by $F_k(u)$. 
As a consequence of the epimorphism $Y_\hbar^{\geq 0}(\fg)\surj \SH^{\sph,\ext}$,  we also have  an epimorphism \[Y^{\leq0}_\hbar(\fg)\surj (\SH^{\sph,\ext})^{\coop} \] given by $x^-_k(u)\mapsto F_k(u)$, $\xi_k(u)\mapsto H^-_k(-u)$.

Recall the \textit{reduced Drinfeld double} $\overline D(\SH^{\ext,\sph})$ (see e.g. \cite[2.4]{X})  is $ D(\SH^{\ext,\sph})$ with the following additional relation imposed
\[H_k^+(u)=H_k^-(-u), \,\ \text{for any $k\in I$}.\] 
Set $H_k(u):=-\hbar H_k^+(u)-\hbar H_k^-(-u)$. 
\begin{theorem}\label{thm: rel DSH}
The following relations hold in the reduced Drinfeld double $\overline  D(\SH^{\ext,\sph})$:  
\begin{align}
E_k(u)F_l(v)&=F_l(v)E_k(u), \,\ \text{for $k\neq l$.}\\
E_k(u)F_k(v)&-F_k(v)E_k(u)=-\hbar \left(\frac{H_k(u)-H_k(v)}{u-v} \right).
\end{align}

\end{theorem}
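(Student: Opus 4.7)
The plan is to derive both relations by specializing the defining commutation law of the Drinfeld double, namely $\sum a_1^-\star b_2^+\cdot(a_2,b_1) = \sum b_1^+\star a_2^-\cdot(b_2,a_1)$ from equation~\eqref{eq: commut rel}, to the pair $a = \tilde F_l(v) := -E_l(-v)$ and $b = E_k(u)$ in $\SH^{\sph,\ext}$, so that $a^- = F_l(v)$ and $b^+ = E_k(u)^+$. By formula~\eqref{eq:delta f} the Sweedler decompositions read
\[
\Delta(E_k(u)) = H_k(x^{(k)})\otimes E_k(u) + E_k(u)\otimes 1, \qquad \Delta^{\coop}(\tilde F_l(v)) = \tilde F_l(v)\otimes H_l(x^{(l)}) + 1\otimes\tilde F_l(v).
\]
Since $(\cdot,\cdot)$ vanishes on elements of different $\bbN^I$-degrees, only four of the sixteen potential products on each side of \eqref{eq: commut rel} survive, and after rearrangement one obtains
\[
E_k(u)^+\star F_l(v) - F_l(v)\star E_k(u)^+\cdot (H_l(x^{(l)}),H_k(x^{(k)})) = (\tilde F_l,E_k(u)) - H_k(x^{(k)})^+\star H_l(x^{(l)})^-\cdot (E_k(u),\tilde F_l).
\]

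For $k\neq l$ both cross-pairings $(\tilde F_l,E_k(u))$ and $(E_k(u),\tilde F_l)$ vanish by the degree condition, while the factor $(H_l(x^{(l)}),H_k(x^{(k)}))$ reduces to $1$ because $H_l$ and $H_k$ sit in orthogonal symmetric-algebra components of $\SH^0$. The identity collapses to $E_k(u)\star F_l(v) = F_l(v)\star E_k(u)$, which is relation~(1). For $k=l$ the argument requires (i) the explicit residue pairing
\[
(E_k(u),\tilde F_k(v)) = \Res\!\left(\frac{\hbar^2\,dx^{(k)}}{(u-x^{(k)})(v-x^{(k)})}\right),
\]
interpreted via the ad\`ele-sum formula of \S~\ref{subsec:paring_on_adele} attached to $S=\{\infty\}$, and (ii) the expansion of $H_k(x^{(k)})^+\star H_k(x^{(k)})^-$ combined with the reduced-double identification $H_k^+(u)=H_k^-(-u)$. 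The partial-fraction identity
\[
\frac{1}{(u-x)(v-x)} = \frac{1}{u-v}\left(\frac{1}{v-x} - \frac{1}{u-x}\right)
\]
organizes both pieces of data into a $1/(u-v)$-kernel carrying the combination $H_k(u)-H_k(v)$, and together with the definition $H_k(u) = -\hbar H_k^+(u) - \hbar H_k^-(-u)$ this produces the stated right-hand side $-\hbar(H_k(u)-H_k(v))/(u-v)$, establishing relation~(2).

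\textbf{Main obstacle.} The most subtle step is the residue calculation in the $k=l$ case: a naive global residue sum on $\bbP^1$ vanishes because the two finite poles $x^{(k)}=u$ and $x^{(k)}=v$ contribute cancelling residues $\pm\hbar^2/(u-v)$, and there is no residue at $\infty$ for the quadratic-order-vanishing integrand. The non-vanishing answer emerges only when the pairing is interpreted in the ad\`ele completion attached to $S=\{\infty\}$, with $u^{-1}, v^{-1}$ treated as formal parameters, so that the two finite residues are re-organized as a formal series in $u^{-1}, v^{-1}$ rather than cancelled. A parallel care is needed to identify the formal product $H_k^+\star H_k^-$ (together with $(H_k(x^{(k)}),H_k(x^{(k)}))$, handled via the formal separation of the two Sweedler copies of $x^{(k)}$) with the rational function whose numerator is $H_k(u)-H_k(v)$. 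Once these formal conventions are pinned down consistently, the partial-fraction identity together with the $H_k$-normalization combine to yield exactly the stated right-hand side, and the case $k\neq l$ follows as a clean corollary from the degree vanishing.
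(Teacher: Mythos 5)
Your overall strategy is the same as the paper's: specialize the Drinfeld-double cross relation \eqref{eq: commut rel} to the degree-$e_k$ generators, feed in the coproduct formula \eqref{eq:delta f}, kill the mixed-degree pairings, and evaluate what remains as a residue. However, there is a genuine error in how you take the Sweedler decomposition of the negative half, and it derails the computation. You decompose $a^-=F_l(v)$ using the \emph{opposite} coproduct, $\tilde F_l(v)\otimes H_l+1\otimes\tilde F_l(v)$, whereas the paper applies \eqref{eq: commut rel} with $a_1\otimes a_2$ running over the components of the \emph{original} coproduct $H_k(x^{(k)})\otimes(-E_k(-u))+(-E_k(-u))\otimes 1$ (this is visible in its surviving terms $H_k^{-}(x^{(k)})\cdot(-E_k(-u),E_k(v))$ and $F_k(u)\star E_k(v)\cdot(1,H_k)$). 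The placement matters: in the paper's bookkeeping the Cartan series $H_k^{\pm}(x^{(k)})$ ends up \emph{inside} the residue, i.e.\ the surviving scalar is $\Res_{x^{(k)}=\infty}\bigl(H_k^{\pm}(x^{(k)})\,E_k(v)(x^{(k)})\,(-E_k(-u))(-x^{(k)})\,dx^{(k)}\bigr)$, and it is precisely the negative powers of $x^{(k)}$ contributed by $H_k^{\pm}(x^{(k)})=\sum_i (h^{(k)})^i (x^{(k)})^{-i-1}$ that make this residue at infinity nonzero and equal (after partial fractions) to $\mp\hbar^{-2}$ times $\hbar\frac{H_k(u)-H_k(v)}{u-v}$. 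In your identity the two $E$--$F$ pairings $(\tilde F_l,E_k(u))$ and $(E_k(u),\tilde F_l)$ carry no $H$-insertion; coefficient by coefficient in $u^{-1},v^{-1}$ they are residues at $\infty$ of $x^{r+s}\,dx$ with $r,s\ge 0$, hence are identically zero. No ad\`elic reinterpretation changes this --- the nonvanishing in the paper does not come from "reorganizing the finite residues" but from the $H$-insertion, which your decomposition has moved out of the pairing and into the factors $(H_l(x^{(l)}),H_k(x^{(k)}))$ and $H_k^+\star H_l^-$. With your identity the right-hand side collapses to $0$ and the relation (2) cannot be recovered.

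A secondary problem is the coefficient $(H_l(x^{(l)}),H_k(x^{(k)}))$ multiplying $F_l(v)\star E_k(u)$. For $k\neq l$ you assert it equals $1$ "by orthogonality", but orthogonality of the symmetric-algebra components would give $0$, not $1$, and the paper only defines $(H_k(u),H_k(w))=\fac(u|w)/\fac(w|u)$, which is not $1$ even for $k=l$ (for a loop-free quiver at $t_1=t_2=\hbar/2$ it is $-\frac{x-x'+\hbar}{x'-x+\hbar}$). In the paper's arrangement this factor never arises: the coefficient of $F_k(u)\star E_k(v)$ is $(1,H_k(x^{(k)}))=1$ by the stated normalization, and the $k\neq l$ case follows immediately because the only potentially nonzero pairing $(E_k,\,\cdot\,)$ is against a degree-$e_l$ element and vanishes. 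To repair your argument, redo the Sweedler expansion with the convention the paper actually uses in its own application of \eqref{eq: commut rel}, so that the surviving terms are $F_k(u)\star E_k(v)+H_k^-(x^{(k)})(-E_k(-u),E_k(v))$ on one side and $E_k(v)\star F_k(u)+H_k^+(x^{(k)})(E_k(v),-E_k(-u))$ on the other; from there your partial-fraction computation and the normalization $H_k(u)=-\hbar H_k^+(u)-\hbar H_k^-(-u)$ do give the stated relation.
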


\begin{proof}
First we consider the case when $k=l$. In $\SH^{\ext}$, we have $\Delta(E_k(u))=H_k(x^{(k)})\otimes E_k(u)+E_k(u)\otimes 1$ by \eqref{eq:delta f}. 
We use the relation \eqref{eq: commut rel} with $a=-E_k(-u)$, $b=E_k(v)$ and the fact
\[
(E_k(u), H_l(x))=0, \,\ (1, E_k(u))=0. 
\] It gives the following relation in $\SH^{\ext, \coop}\otimes \SH^{\ext}$: 
\begin{align*}
&H_k^-(x^{(k)})\star1 (-E_k(-u), E_k(v))+ F_k(u)\star E_k(v) (1, H_k(x^{(k)}))\\
=& E_k(v)\star F_k(u) (1, H_k(x^{(k)}))+H_k^+(x^{(k)})\star1 (E_k(v), -E_k(-u)). 
\end{align*}
Therefore, we have
\begin{align*}
[F_k(u), E_k(v)]
&=H_k^+(x^{(k)}) (E_k(v), -E_k(-u))
-H_k^-(x^{(k)}) (-E_k(-u), E_k(v))\\
&=H_k^+(x^{(k)}) \left(\frac{\hbar}{v-x^{(k)}}, \frac{\hbar}{u+x^{(k)}} \right)
-H_k^-(x^{(k)})  \left(\frac{\hbar}{u+x^{(k)}}, \frac{\hbar}{v-x^{(k)}} \right)\\
%----------
&=\hbar^2\Res_{x^{(k)}=\infty} 
\frac{ H_k^+(x^{(k)}) d x^{(k)}}{(v-x^{(k)})(u-x^{(k)} )} -
\hbar^2\Res_{x^{(k)}=\infty} 
 \frac{ H_k^-(x^{(k)}) d x^{(k)}}{(u+x^{(k)})(v+x^{(k)} )}\\
 %-----
&=\hbar^2 \Big(-\frac{H_k^+(u)-H_k^+(v) }{ u-v}
+\frac{H_k^-( -v)-H_k^-( -u)}{-v+u}\Big)\\
&
=\hbar \frac{H_k(u)-H_k(v) }{ u-v}.
\end{align*}

The case when $k\neq l$ is clear, which follows from the relation \eqref{eq: commut rel} with $a=E_k(u)$, $b=E_l(v)$, and $(E_k(u), E_l(v))=0$. 
This completes the proof. 
\end{proof}

The following is a direct corollary to Theorem~\ref{thm: rel DSH}.
\begin{corollary}\label{cor: epi}
Assume $Q$ has no edge-loops.  
We have an algebra epimorphism from 
 $Y_\hbar(\fg)$ to $\overline  D(\SH^{\ext,\sph})$. This map is an isomorphism when $Q$ is of finite type. 
\end{corollary}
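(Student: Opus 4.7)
The plan is to exhibit a single algebra map $\Psi\colon Y_\hbar(\fg_Q)\to \overline D(\SH^{\ext,\sph})$ and then read the two assertions off the defining relations. Concretely, I would send $x^+_{k,r}$ through the composition $Y^{\geq 0}_\hbar(\fg_Q)\twoheadrightarrow \SH^{\sph,\ext}\hookrightarrow \overline D(\SH^{\ext,\sph})$, the first arrow being the epimorphism of \cite[Theorem D]{YZ1} recalled in \S\ref{symmetricYangian}; send $x^-_{k,r}$ through the analogous composition $Y^{\leq 0}_\hbar(\fg_Q)\twoheadrightarrow (\SH^{\sph,\ext})^{\coop}\hookrightarrow \overline D(\SH^{\ext,\sph})$ constructed just before Theorem~\ref{thm: rel DSH}; and send $\xi_{k,r}$ to its common image in $\SH^0$. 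The fact that the positive and negative Cartan actions coincide on the target is built into the definition of $\overline D$, which imposes $H_k^+(u)=H_k^-(-u)$.

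To verify that $\Psi$ is a well-defined algebra homomorphism I would check the relations (Y1)--(Y6) in turn. The relations (Y1)--(Y4) and the Serre relations (Y6) involving only $x^+_{k}$ and $\xi_k$ hold inside the sub-bialgebra $\SH^{\sph,\ext}$ by \cite[Theorem D]{YZ1}, hence in $\overline D(\SH^{\ext,\sph})$; the analogous argument via $(\SH^{\sph,\ext})^{\coop}$ handles the all-negative versions. Only the mixed commutator (Y5) needs to be verified anew, and it is exactly Theorem~\ref{thm: rel DSH}, once $\xi_k(u)$ is identified with the normalized Cartan series $H_k(u)=-\hbar H_k^+(u)-\hbar H_k^-(-u)$. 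Surjectivity is then automatic: by definition $\SH^{\sph,\ext}$ is generated by $\SH_{e_k}$ over $\SH^0$, so $\overline D(\SH^{\ext,\sph})$ is generated by the images of $x^{\pm}_{k,r}$ and $\xi_{k,r}$.

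For the finite-type refinement, I would promote the surjection to an isomorphism by comparing triangular decompositions on both sides. The Yangian admits the classical PBW decomposition $Y_\hbar(\fg_Q)\cong Y^{\leq 0}_\hbar\otimes Y^{0}_\hbar\otimes Y^{\geq 0}_\hbar$ as a $\C[\hbar]$-module when $\fg_Q$ is finite-dimensional, and the reduced Drinfeld double similarly decomposes as $(\SH^{\sph,\ext})^{\coop}\otimes \SH^0\otimes \SH^{\sph,\ext}$ with the Cartan factor shared. The restrictions of $\Psi$ to the positive and negative Borel halves are both isomorphisms by the finite-type case of \cite[Theorem D]{YZ1}, and on the Cartan part $\Psi$ is the identity; so the tensor product $\Psi$ is an isomorphism.

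The main obstacle I anticipate is not conceptual but a matter of bookkeeping: pinning down the normalizations of $\xi_k(u)$, $H_k^\pm(u)$ and the residue pairing so that the $\hbar$-coefficient produced by Theorem~\ref{thm: rel DSH} matches the one in (Y5) precisely, and confirming that the triangular decomposition of $\overline D(\SH^{\ext,\sph})$ used in the finite-type step really holds (which I would extract from the construction of the Drinfeld double in \S\ref{sec:pairing} together with the non-degeneracy of the pairing). Beyond these checks the corollary is formal, which is why the authors advertise the statement as a direct consequence of Theorem~\ref{thm: rel DSH}.
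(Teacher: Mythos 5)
Your proposal matches the paper's proof essentially verbatim: the same assignment $x_k^{\pm}(u)\mapsto E_k(u),F_k(u)$ and $\xi_k(u)\mapsto H_k(u)$, with the relations internal to each Borel half supplied by \cite[Theorem~D]{YZ1}, the mixed relation by Theorem~\ref{thm: rel DSH}, surjectivity by the definition of $\overline D(\SH^{\ext,\sph})$, and the finite-type isomorphism by comparing triangular decompositions factor by factor. No substantive differences.
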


\begin{proof}
Construct a map from the Yangian $Y_\hbar(\fg)$ to $\overline D(\SH^{\ext,\sph})$, by
\[
x_k^+(u) \mapsto E_k(u)\,\ , x_k^-(u) \mapsto F_k(u)\,\  \text{and }\,\
\xi_{k}(u)\mapsto H_k(u). 
\] 
By Theorem \ref{thm: rel DSH}, the map respects the relations of the Yangian  $Y_\hbar(\fg)$, hence is an algebra homomorphism. Be definition of $\overline D(\SH^{\ext,\sph})$,  this  map is  surjective. When $Q$ is of finite type,  this map also preserves the triangular decomposition $Y_\hbar(\fg)=Y^+_\hbar(\fg)\otimes Y^0\otimes Y^-_\hbar(\fg)$ and $\overline D(\SH^{\ext,\sph})=\SH^{\sph,\ext}\otimes\SH^0\otimes(\SH^{\sph,\ext})^\coop$. 
According to  \cite[Theorem~D]{YZ1}, this map restricts to an isomorphism on each tensor-factor. Therefore, it is an isomorphism for finite type quiver $Q$. 
\end{proof}
The morphism in Corollary~\ref{cor: epi} is expected to be an isomorphism for a more general class of quivers, which includes the affine Dynkin quivers. We will investigate this in a future publication \cite{GYZ}, based on the results of the present paper.

Proposition \ref{prop:Yangian coproduct} and the corollary above imply Theorem~\ref{introthm:Yangian}, in view of  Proposition~\ref{prop:Shuf_CoHA}.
\subsection{The double Yangian}
The double Yangian $DY_\hbar(\fg)$, which is the Drinfeld double of the Yangian $Y_\hbar(\fg)$, is also of interests. (See, e.g., \cite{KT}.) We expect that $DY_\hbar(\fg)$ can also be realized as a subalgebra in the reduced version of $D(\SH^{\sph,\ext}_{\bbA})$. The construction is similar to that of $D(\SH^{\sph,\ext}_S)$. 
\begin{remark}\label{rmk:doubleYang}
For each $i\in I$, let $\SH_{e_i,K}\subseteq \SH_{e_i,\bbA}$ be the image of $K_\infty$ under the isomorphism $\SH_{e_i,\bbA}=\bbA$.
We consider the subalgebra of $D(\SH^{\sph,\ext}_{\bbA})$, generated by $\SH_{e_i,K}$,  $\SH_{e_i,K}^{\coop}$ for $i\in I$, and $\SH^0\otimes(\SH^{0})^\coop$. This subalgebra, which is closed under comultiplication, is denoted by $D(\SH^{\sph,\ext}_{K})$.
Again, define the \textit{reduced Drinfeld double} $\overline D(\SH^{\ext,\sph}_K)$ to be $ D(\SH^{\ext,\sph}_K)$ with the following additional relation imposed
\[H_k^+(u)=H_k^-(-u), \,\ \text{for any $k\in I$}.\] 
Clearly $\overline D(\SH^{\ext,\sph}_{S}) \subseteq \overline D(\SH^{\sph,\ext}_{K})$ is a sub-bialgebra. 
Taking classical limit, we have an isomorphism \[\overline D(\SH^{\ext,\sph}_K)|_{t_1=t_2=0}\cong U(\fg(u^{-1})).\]
Although  showing the isomorphism between 
 $\overline D(\SH^{\ext,\sph}_{K})$ and the algebra $ DY_\hbar(\fg)$ constructed in \cite{KT} is beyond the scope of the present paper, based on the observation above this is conceivable. 
 \end{remark}
 
\newcommand{\arxiv}[1]
{\texttt{\href{http://arxiv.org/abs/#1}{arXiv:#1}}}
\newcommand{\doi}[1]
{\texttt{\href{http://dx.doi.org/#1}{doi:#1}}}
\renewcommand{\MR}[1]
{\href{http://www.ams.org/mathscinet-getitem?mr=#1}{MR#1}}

\end{document}